\documentclass[12pt]{amsart}
\usepackage{amssymb}
\usepackage{amsmath, amsfonts, amsthm, amssymb, cite}
\oddsidemargin=0in
\evensidemargin=0in
\textwidth=6.7in
\DeclareMathOperator{\Tr}{Tr}
\parindent=0.0in
\parskip=0.1in

\begin{document}


\newtheorem{theorem}{Theorem}[section]
\newcommand{\mar}[1]{{\marginpar{\textsf{#1}}}}
\numberwithin{equation}{section}
\newtheorem*{theorem*}{Theorem}
\newtheorem{prop}[theorem]{Proposition}
\newtheorem*{prop*}{Proposition}
\newtheorem{lemma}[theorem]{Lemma}
\newtheorem{corollary}[theorem]{Corollary}
\newtheorem*{conj*}{Conjecture}
\newtheorem*{corollary*}{Corollary}
\newtheorem{definition}[theorem]{Definition}
\newtheorem*{definition*}{Definition}
\newtheorem{remarks}[theorem]{Remarks}
\newtheorem*{remarks*}{remarks}

\newtheorem*{not*}{Notation}

\newcommand{\Dslash}{{D\mkern-11.5mu/\,}}

\newcommand\pa{\partial}
\newcommand\cohom{\operatorname{H}}
\newcommand\Td{\operatorname{Td}}
\newcommand\Trig{\operatorname{Trig}}
\newcommand\Hom{\operatorname{Hom}}
\newcommand\End{\operatorname{End}}
\newcommand\Ker{\operatorname{Ker}}
\newcommand\Ind{\operatorname{Ind}}
\newcommand\oH{\operatorname{H}}
\newcommand\oK{\operatorname{K}}
\newcommand\codim{\operatorname{codim}}
\newcommand\Exp{\operatorname{Exp}}
\newcommand\CAP{{\mathcal AP}}
\newcommand\T{\mathbb T}
\newcommand{\M}{\mathcal{M}}
\newcommand\ep{\epsilon}
\newcommand\te{\tilde e}
\newcommand\Dd{{\mathcal D}}

\newcommand\what{\widehat}
\newcommand\wtit{\widetilde}
\newcommand\mfS{{\mathfrak S}}
\newcommand\cA{{\mathcal A}}
\newcommand\maA{{\mathcal A}}
\newcommand\maF{{\mathcal F}}
\newcommand\maN{{\mathcal N}}
\newcommand\cM{{\mathcal M}}
\newcommand\maE{{\mathcal E}}
\newcommand\cF{{\mathcal F}}
\newcommand\maG{{\mathcal G}}
\newcommand\cG{{\mathcal G}}
\newcommand\cH{{\mathcal H}}
\newcommand\maH{{\mathcal H}}
\newcommand\cO{{\mathcal O}}
\newcommand\cR{{\mathcal R}}
\newcommand\cS{{\mathcal S}}
\newcommand\cU{{\mathcal U}}
\newcommand\cV{{\mathcal V}}
\newcommand\cX{{\mathcal X}}
\newcommand\cD{{\mathcal D}}
\newcommand\cnn{{\mathcal N}}
\newcommand\wD{\widetilde{D}}
\newcommand\wL{\widetilde{L}}
\newcommand\wM{\widetilde{M}}
\newcommand\wV{\widetilde{V}}
\newcommand\vf{\varphi}
\newcommand\Ee{{\mathcal E}}
\newcommand{\npartial}{\slash\!\!\!\partial}
\newcommand{\Heis}{\operatorname{Heis}}
\newcommand{\Solv}{\operatorname{Solv}}
\newcommand{\Spin}{\operatorname{Spin}}
\newcommand{\SO}{\operatorname{SO}}
\newcommand{\ind}{\operatorname{ind}}
\newcommand{\Index}{\operatorname{index}}
\newcommand{\ch}{\operatorname{ch}}
\newcommand{\rank}{\operatorname{rank}}
\newcommand{\G}{\Gamma}
\newcommand{\HK}{\operatorname{HK}}
\newcommand{\Dix}{\operatorname{Dix}}
\newcommand{\hideqed}{\renewcommand{\qed}{}}
\newcommand{\abs}[1]{\lvert#1\rvert}
 \newcommand{\A}{{\mathcal A}}
        \newcommand{\D}{{\mathcal D}}
\newcommand{\HH}{{\mathcal H}}
        \newcommand{\LL}{{\mathcal L}}
        \newcommand{\B}{{\mathcal B}}
        \newcommand{\K}{{\mathcal K}}
\newcommand{\oo}{{\mathcal O}}
         \newcommand{\PP}{{\mathcal P}}
        \newcommand{\s}{\sigma}
        \newcommand{\coker}{{\mbox coker}}
        \newcommand{\p}{\partial}
        \newcommand{\dd}{|\D|}
        \newcommand{\n}{\parallel}
\newcommand{\bma}{\left(\begin{array}{cc}}
\newcommand{\ema}{\end{array}\right)}
\newcommand{\bca}{\left(\begin{array}{c}}
\newcommand{\eca}{\end{array}\right)}
\newcommand{\sr}{\stackrel}
\newcommand{\da}{\downarrow}
\newcommand{\tD}{\tilde{\D}}
               \newcommand{\h}{\mathbf H}
\newcommand{\Z}{\mathcal Z}
\newcommand{\N}{\mathbf N}
\newcommand{\tto}{\longrightarrow}
\newcommand{\ZZ}{{\mathcal Z}}
\newcommand{\ben}{\begin{displaymath}}
        \newcommand{\een}{\end{displaymath}}
\newcommand{\be}{\begin{equation}}
\newcommand{\ee}{\end{equation}}
        \newcommand{\bean}{\begin{eqnarray*}}
        \newcommand{\eean}{\end{eqnarray*}}
\newcommand{\nno}{\nonumber\\}
\newcommand{\bea}{\begin{eqnarray}}
        \newcommand{\eea}{\end{eqnarray}}
\newcommand{\x}{\times}

\newcommand{\Ga}{\Gamma}
\newcommand{\e}{\epsilon}
\renewcommand{\L}{\mathcal{L}}
\newcommand{\supp}[1]{\operatorname{#1}}
\newcommand{\norm}[1]{\parallel\, #1\, \parallel}
\newcommand{\ip}[2]{\langle #1,#2\rangle}
\newcommand{\nc}{\newcommand}
\nc{\gf}[2]{\genfrac{}{}{0pt}{}{#1}{#2}}
\nc{\mb}[1]{{\mbox{$ #1 $}}}
\nc{\real}{{\mathbb R}}
\nc{\R}{{\mathbb R}}
\nc{\C}{{\mathbb C}}
\nc{\comp}{{\mathbb C}}
\nc{\ints}{{\mathbb Z}}
\nc{\Ltoo}{\mb{L^2({\mathbf H})}}
\nc{\rtoo}{\mb{{\mathbf R}^2}}
\nc{\slr}{{\mathbf {SL}}(2,\real)}
\nc{\slz}{{\mathbf {SL}}(2,\ints)}
\nc{\su}{{\mathbf {SU}}(1,1)}
\nc{\so}{{\mathbf {SO}}}
\nc{\hyp}{{\mathbb H}}
\nc{\disc}{{\mathbf D}}
\nc{\torus}{{\mathbb T}}
\newcommand{\tk}{\widetilde{K}}
\newcommand{\boe}{{\bf e}}\newcommand{\bt}{{\bf t}}
\newcommand{\vth}{\vartheta}
\newcommand{\CGh}{\widetilde{\CG}}
\newcommand{\db}{\overline{\partial}}
\newcommand{\tE}{\widetilde{E}}
\newcommand{\tr}{\mbox{tr}}
\newcommand{\ta}{\widetilde{\alpha}}
\newcommand{\tb}{\widetilde{\beta}}
\newcommand{\txi}{\widetilde{\xi}}
\newcommand{\hV}{\hat{V}}
\newcommand{\IC}{\mathbf{C}}
\newcommand{\IZ}{\mathbf{Z}}
\newcommand{\IP}{\mathbf{P}}
\newcommand{\IR}{\mathbf{R}}
\newcommand{\IH}{\mathbf{H}}
\newcommand{\IG}{\mathbf{G}}
\newcommand{\IS}{\mathbf{S}}
\newcommand{\CC}{{\mathcal C}}
\newcommand{\CD}{{\mathcal D}}
\newcommand{\CS}{{\mathcal S}}
\newcommand{\CG}{{\mathcal G}}
\newcommand{\CL}{{\mathcal L}}
\newcommand{\CO}{{\mathcal O}}
\nc{\ca}{{\mathcal A}}
\nc{\cag}{{{\mathcal A}^\Gamma}}
\nc{\cg}{{\mathcal G}}
\nc{\chh}{{\mathcal H}}
\nc{\ck}{{\mathcal B}}
\nc{\cd}{{\mathcal D}}
\nc{\cl}{{\mathcal L}}
\nc{\cm}{{\mathcal M}}
\nc{\cn}{{\mathcal N}}
\nc{\cs}{{\mathcal S}}
\nc{\cz}{{\mathcal Z}}
\nc{\sind}{\sigma{\rm -ind}}

\newcommand\clFN{{\mathcal F_\tau(\mathcal N)}}       
\newcommand\clKN{{\mathcal K_\tau(\mathcal N)}}       
\newcommand\clQN{{\mathcal Q_\tau(\mathcal N)}}       %
\newcommand\tF{\tilde F}
\newcommand\clA{\mathcal A}
\newcommand\clH{\mathcal H}
\newcommand\clN{\mathcal N}
\newcommand\Del{\Delta}
\newcommand\g{\gamma}
\newcommand\eps{\varepsilon}
\newcommand{\dslash}{{\partial\mkern-10mu/\,}}

\newcommand{\sepword}[1]{\quad\mbox{#1}\quad} 
\newcommand{\comment}[1]{\textsf{#1}}   

 \title{Integration on locally compact noncommutative spaces}

\author{A. L. CAREY}
\address{Mathematical Sciences Institute,
 Australian National University\\
 Canberra ACT, 0200 AUSTRALIA\\
 e-mail: alan.carey@anu.edu.au\\}
\author{V. GAYRAL}
\address{Laboratoire de Mathematiques\\
Universite Reims Champagne-Ardenne\\
Moulin de la Housse-BP 1039, 51687 Reims, FRANCE\\
e-mail: victor.gayral@univ-reims.fr}
\author{A. RENNIE}
\address{Mathematical Sciences Institute\\
Australian National University\\
Canberra ACT, 0200 AUSTRALIA\\
e-mail: adam.rennie@anu.edu.au}
\author{F. A. SUKOCHEV}
\address{School of Mathematics and Statistics,
University of New South Wales\\
Kensington NSW, 2052 AUSTRALIA\\
e-mail: f.sukochev@unsw.edu.au}

\begin{abstract}
{We present an ab initio approach to integration theory for nonunital spectral triples. This
is done without reference to local units and in the full generality of semifinite noncommutative
geometry. The main result is an equality between the Dixmier trace and generalised residue
of the zeta function and heat kernel  of suitable operators. We also examine definitions for integrable
bounded elements of a spectral triple based on zeta function, heat kernel and Dixmier
trace techniques. We show that zeta functions and heat kernels yield equivalent notions
of integrability, which imply Dixmier traceability.}
\end{abstract}
        \maketitle

\section{introduction}
The definition  of nonunital spectral triples
  is dictated by Kasparov theory, however the 
  correct method of specifying summability has not been established on a firm base.
  In particular we need a better understanding of the relationship
  between the residues of the zeta function, asymptotics of the heat kernel and their relationship
  with the Dixmier trace. Previous treatments of this question
  rely on a quasi-localness assumption that allows one to reduce 
  to a `compactly supported case', a notion which may or may not be generally applicable,
  \cite{GGISV,R1,R2}. 
  Here we seek a more general point of view that
  avoids local units. Moreover we treat the problem in the general setting of semifinite noncommutative geometry.

It is known
by Connes' trace theorem, \cite{Co1}, that for a {\em compact} Riemannian 
manifold $M$ of dimension $p$, there is an intimate
connection between the residue of the zeta function of the
Laplacian at its first singularity, the Dixmier trace, and
integration theory of functions  with respect to the standard measure.  
More precisely,
we let $\Delta$ be a Laplace type operator acting on sections of a vector
bundle and $f\in C^\infty(M)$ be a smooth function acting by
multiplication on smooth sections. Then $\Delta$ extends to an essentially
self-adjoint operator  with compact resolvent on $L^2$-sections
and the multiplication operator $M_f$ extends to a
bounded operator. Then if
$G:=(1+\Delta)^{-p/2}$, the operator $M_fG$ has singular numbers 
$\mu_n=O(\frac{1}{n})$, and the Dixmier trace of this operator is equal to 
(up to a constant depending only on the dimension $p$) the
integral of $f$ over $M$ with respect to the measure in
$M$ given by the volume form associated to a choice of a Riemannian metric. 
The residue of the zeta function 
$\Tr(M_fG^s)$ at $s=1$ and the    rescaled limit of $\Tr(M_fe^{-t\Delta})$ when $t\to 0^+$, also coincide (up to a constant) with the integral of $f$.

More recently in \cite{LPS, LS} it has been shown that
for $f\in L^1(M)$ we can define the zeta function $\Tr(G^{s/2}M_f G^{s/2})$
for $s>1$ and that the (generalised or $\omega$) residue at $s =1$ is equal to
the integral of $f$ over $M$, {\em while
the Dixmier trace of $G^{1/2}M_f G^{1/2}$ does not exist for all $f$ in $L^1(M)$}.
Related results which indicate the difficulties of noncommutative integration are that
the Dixmier
trace of $M_fG$  exists if and only if $f\in L^2(M)$. In summary, the 
generalised residue of the zeta function exists in greater generality than the Dixmier
trace, and recovers integration on the manifold. When the Dixmier trace also exists, it
agrees with the zeta residue (up to a constant). These results clearly indicate the 
subtleties 
one has to face when manipulating products, symmetrised or unsymmetrised, 
even in the 
compact manifold case.

When the manifold is not compact the situation is much less clear.
Moreover the analogous noncommutative integration theory (for
nonunital pre-$C^*$-algebras) has not been developed from first
principles, rather, an assumption is made about the existence of a
system of local units for the algebra, \cite{R1,R2}. This is a plausible
assumption given that there is a  local structure on a noncompact
manifold $M$. Moreover this assumption allows one to show that various 
possible definitions agree, and has been used to prove results
analogous to the unital case, such as
relating the zeta function and  Dixmier trace in that setting, \cite{GGISV,R1,R2}. Noncommutative examples
which fit into this framework are   described in \cite{GGISV,PRe,PRS,R1}.

Thus we
ask what can we   learn about noncommutative integration by
beginning with the zeta function, rather than the Dixmier trace, in
the nonunital case. More precisely we phrase this, our
fundamental  question, as follows: {\em what is the appropriate
noncommutative integral for nonunital spectral triples?}

There are various possible answers to our fundamental question
and we aim, in this note, to explain one, which in view of the results we obtain,
is potentially the most natural.
A second question, which we believe our approach
helps to understand,  is {\em what summability
hypotheses are needed to  prove the local index formula for nonunital
spectral triples}. We shall address this question in another place.
Thus for the moment our focus is on whether there are reasonable notions of
integration, integrability, and most fundamentally, spectral dimension
in the non\-unital case. Importantly,
 we will couch our results, and choose proofs, that go through for the general
framework of semifinite spectral triples.

To explain this further we need to resolve a notational issue. Let
$\cn$ be a semifinite von Neumann algebra with fixed faithful
normal semifinite trace $\tau$. In \cite{CRSS} we introduced a
family of ideals 
that we denoted by
${\mathcal Z}_p:= {\mathcal Z}_p(\cn, \tau)$. These ideals are
naturally defined by the asymptotics of the zeta function 
$s\mapsto \tau(T^s)$ as $s$ converges from above
to the infimum of values for which this trace is finite. As a consequence of our
analysis we found that ${\mathcal Z}_1$ coincided with the (dual
to the) Macaev ideal for which it has become standard in
noncommutative geometry to use the notation $\cl^{1,\infty}$.
In this paper we will use the notation ${\mathcal Z}_p$ even in
the case $p=1$ for consistency. We make more comments on this matter later.

Now, the problem that arises for noncommutative and nonunital integration 
is that we are dealing with products of operators, neither
of which individually lies in ${\mathcal Z}_1$ but whose product does lie in
${\mathcal Z}_1$. Examples show that a similar phenomenon persists in the case of
noncommutative algebras \cite{GGISV,PRe,PRS}
and general semifinite traces. The difficulties posed by this situation for the analysis of
Schr\"odinger  operators has been explained in detail in \cite[Chapter $f(X)g(-i\nabla)$]{Simon}.

The aim of this article is to study the
general situation from a point of view as close as possible to that of
the unital case without assuming the existence of a quasi-local structure.

We begin with some preliminary facts in Sections 2 and 3. 
One of our first results in Section 4 is to establish is that we could just as well 
study the heat
kernel asymptotics. That is,  the heat kernel asymptotics imply precisely the same 
data and exist in the same generality
as the generalised zeta residue.
 We next show that for bounded operators $a\in\cn$, the existence of the 
 generalised $\zeta$-residue 
$\omega-\lim_{r\to\infty} \frac{1}{r}\Tr(a^*G^{1+\frac{1}{r}}a)$ 
implies that $a^*Ga\in\Z_1$. A range of similar statements
shows the compatibility of the zeta residue with the ideals $\Z_p$ and $\LL^{p,\infty}$.
We use implicitly a Hilbert algebra framework (described in Section 4), that is, our approach yields a noncommutative $L^2$ theory.

Our main result is proved in Section 5, namely 
that under `suitable conditions', the zeta function or the heat kernel 
can be used to compute
the Dixmier trace. We seek the weakest hypotheses feasible for
this main result with the consequence that
 the proof is surprisingly subtle and lengthy. We finish in Section 7 by showing that there
are definitions of spectral triple and spectral dimension in the nonunital case for which these `suitable conditions' may be verified.

\subsection*{ Acknowledgements} We thank A. Bikchentaev, A. Connes,
N. Kalton, R. Nest and A. Sedaev for 
discussions and useful references, and D. Potapov for a careful reading of the manuscript. 

\section{The ideals $\Z_p$}
 \label{sec:unital-case}
\subsection{Definitions and notations}
We fix a semifinite von Neumann
algebra $\cn$ acting on a separable Hilbert space $\HH$. Fix also
a faithful, semifinite, normal trace $\tau:\cn\to\C$. The zeta
function of a positive $\tau$-compact operator $T$ is given by
$\zeta(s)=\tau(T^s)$ for real positive $s$, on the assumption that
there exists some $s_0$ for which the trace is finite. Note that
it is then true that $\tau(T^s)<\infty$ for all $s\geq s_0$.

Let us consider the space $\Z_1$ introduced in \cite{CRSS}:
$$
\Z_1:={\mathcal Z}_1(\clN,\tau):=\big\{T\in {\mathcal N}:\ \|T\|_{{\mathcal
Z}_1}=\limsup_{p\searrow 1}(p-1)\tau(|T|^p)<\infty\big\}.
$$
Note that $||.||_{\Z_1}$ is a seminorm, only. The next  equality is easy to see 
(for details consult \cite{CRSS})
$$
\|T\|_{{\mathcal Z}_1}
=\limsup_{p\searrow 1}(p-1)\Big(\int_0^\infty \mu_t(|T|)^pdt\Big)^{1/p}
=\limsup_{p\searrow1}(p-1)\|T\|_{p}.
$$ 
(We use the notation
$\L^p$ for the Schatten ideals in $(\clN,\tau)$ and $\|.\|_p$ for the Schatten norms.) 
More generally, we define for $p\geq 1$ the spaces $\Z_p$, the $p$-convexifications of
$\Z_1$ \cite{LinTza1979-MR540367}, by
$$
{\mathcal Z}_p(\cn,\tau)=\big\{T\in {\mathcal N}:\ \|T\|_{{\mathcal Z}_p}=
\limsup_{q\searrow p}\big((q-p)\tau(|T|^q)\big)^{1/q}<\infty\big\}.
$$

An important role in noncommutative geometry is played by the ideal
of compact operators whose partial sums of singular values are
logarithmically divergent. This ideal (in the setting of
general semifinite von Neumann algebras) is probably best
expressed through the terminology of noncommutative
Marcinkiewicz spaces and we refer to \cite{LSS,CS,CRSS} for a
detailed exposition of relevant parts of this theory. Here, we set
$$
M_{1,\infty}(\clN,\tau):=\big\{
T\in {\mathcal N}:\ \|T\|_{1,\infty}:=\sup_{0<t<\infty}
\log(1+t)^{-1} \int_0^t \mu_s(T)ds < \infty\big\}.
$$ 
We will usually take $(\cn,\tau)$ as fixed, and write $M_{1,\infty}$ instead of 
$M_{1,\infty}(\cn,\tau)$, as this will cause no confusion. Similar comments apply to 
the notation for other
ideals.
 
The Banach space
$(M_{1,\infty},\|.\|_{1,\infty})$ was probably
first considered by Matsaev \cite{Mat}. It may be  viewed as a
noncommutative analogue of a Sargent (sequence) space, see
\cite{Sar}. In noncom\-mutative geometry it has become customary to
use the notation $\mathcal{L}^{1,\infty}$ to denote the ideal
$M_{1,\infty}$. However we will avoid the $\L^{1,\infty}$
notation as it clashes with the well-established notation of
quasi-normed weak $L_1$-spaces. For a fuller treatment of the
history of the space $M_{1,\infty}$ and additional references, we
refer the interested reader to the recent paper \cite{Pie} by Pietsch.

More generally, we let $M_{p,\infty}$, $p\geq 1$, denote the $p$-convexification
of the space $M_{1,\infty}$, defined by
\begin{equation}
\label{Mp}
M_{p,\infty}(\clN,\tau):=\big\{
T\in {\mathcal N}:\ \|T\|_{p,\infty}^p:=\sup_{0<t<\infty}
\log(1+t)^{-1} \int_0^t \mu_s(|T|^p)ds < \infty\big\}.
\end{equation}

In our present context, it is important to observe that it follows
from \cite[Theorem 4.5]{CRSS} that the sets
$M_{1,\infty}$ and ${\mathcal Z}_1$ coincide
and that $\|T\|_0 \leq e\|G\|_{Z_1}$ and $\|T\|_{Z_1} \leq
\|T\|_{1,\infty}$, where the seminorm~$\left\|\cdot\right\|_0$
is the distance in the norm $\left\|\cdot\right\|_{1,\infty}$ to the subspace
$M^0_{1,\infty}$ of $M_{1,\infty}$ formed by
the $(1,\infty)$-norm closure of the trace ideal $\CL^1\subset
M_{1,\infty}$. To be consistent with our $\Z_p$ notation, 
we also denote the latter ideal as $\Z_1^0$.
Of course, the spaces
$\Z_p$ and $M_{p,\infty}$ coincide. We also stress that $\L^{p,\infty}$, $p>1$, the 
collection of $\tau$-compact operators for which $\mu_t(T)=O(t^{-1/p})$, are strictly 
included in $\Z_p$, \cite{CRSS}. Moreover, a careful  inspection of its proof, gives the following strengthening  of Theorem 4.5 in \cite{CRSS}:
\begin{theorem}
\label{great}
The norm $\|.\|_{1,\infty}$ of the Marcinkiewicz space $M_{1,\infty}$ is equivalent to the $\zeta$-norm:
 $$\sup_{p>1}\,(p-1)\|T\|_p\,,\quad T\in\cn.$$
\end{theorem}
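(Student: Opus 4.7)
The plan is to upgrade the inequality $\|T\|_{Z_1}\le \|T\|_{1,\infty}$ of Theorem 4.5 of \cite{CRSS}, which involves a $\limsup$ as $p\searrow 1$, into a genuine supremum over $p>1$. Since the $\Z_1$ seminorm is already known to be equivalent to $\|\cdot\|_{1,\infty}$ asymptotically, the essence is to revisit the proof of Theorem 4.5 and verify that every estimate used there is in fact uniform in $p$. The main analytic tool for the upper estimate is the Hardy--Littlewood--Calder\'on majorisation principle, while for the reverse inequality one uses H\"older together with a careful $p$-dependent optimisation against $t$.

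For the upper bound, one wishes to show that $(p-1)\tau(|T|^p)\le C\,\|T\|_{1,\infty}^{p}$ holds for \emph{every} $p>1$, not merely in the $\limsup$ sense. Because $\mu_t(|T|)$ is decreasing with $\int_0^t \mu_s(|T|)\,ds\le \|T\|_{1,\infty}\log(1+t)$, and $t\mapsto \log(1+t)$ is concave with derivative $1/(1+t)$, the Calder\'on majorisation principle gives
\begin{equation*}
\int_0^\infty F(\mu_t(|T|))\,dt \;\le\; \int_0^\infty F\!\left(\frac{\|T\|_{1,\infty}}{1+t}\right)dt
\end{equation*}
for any convex $F$ with $F(0)=0$. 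Applying this with $F(x)=x^p$ produces $\int_0^\infty\mu_t(|T|)^p\,dt\le \|T\|_{1,\infty}^p/(p-1)$, which is the desired uniform bound. The key point is that this majorisation step uses no asymptotic information and is valid for every single $p>1$ with the same constant.

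For the reverse direction, H\"older's inequality yields
\begin{equation*}
\int_0^t \mu_s(|T|)\,ds \;\le\; t^{1-1/p}\,\|T\|_p \;\le\; t^{1-1/p}\,(p-1)^{-1/p}\,M,
\end{equation*}
where $M$ abbreviates the right-hand $\zeta$-quantity. The trick is to let $p$ depend on $t$; choosing $p-1\sim 1/\log(1+t)$ gives $t^{1-1/p}(p-1)^{-1/p}\lesssim \log(1+t)$, so that $\int_0^t\mu_s(|T|)\,ds\le C\,M\,\log(1+t)$, uniformly in $t$, which is precisely $\|T\|_{1,\infty}\le C\,M$.

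The main obstacle I expect is bookkeeping: one has to track the constants through the Calder\'on majorisation (to confirm they do not deteriorate as $p\to\infty$) and through the $t$-dependent choice of $p$ in the H\"older step (to ensure the bound is uniform for both small and large $t$, where the asymptotics of $\log(1+t)$ differ). Once these uniform constants are in hand, the equivalence of seminorms asserted in the statement follows at once.
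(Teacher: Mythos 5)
Your two estimates are the right ones, and they are exactly the tools behind the cited Theorem 4.5 of \cite{CRSS}, which is all the paper itself appeals to: the submajorization $\mu_t(T)\prec\prec \|T\|_{1,\infty}/(1+t)$ combined with full symmetry of $\L^p$ (this reappears verbatim as equation \eqref{untruc} in Lemma \ref{limits}), and H\"older with a $t$-dependent exponent for the converse. However, both points you defer to ``bookkeeping'' are genuine gaps, and the first uniformity claim is false. Your majorization gives $\tau(|T|^p)\le \|T\|_{1,\infty}^p/(p-1)$, hence $(p-1)\|T\|_p\le (p-1)^{1-1/p}\|T\|_{1,\infty}$; the factor $(p-1)^{1-1/p}$ is $\le 1$ for $1<p\le 2$ but tends to infinity with $p$. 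This is not an artefact of the method: every nonzero $T\in M_{1,\infty}$ lies in $\L^p$ for all $p>1$ and satisfies $\|T\|_p\to\|T\|>0$ as $p\to\infty$, so $\sup_{p>1}(p-1)\|T\|_p=+\infty$ identically. The supremum must be restricted to a right neighbourhood of $1$, say $1<p\le 2$, exactly as in the later definition of $\|\cdot\|_\zeta$; with that restriction your first estimate is complete, with constant $1$.

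For the converse, the choice $p-1=1/\log(1+t)$ lies in $(1,2]$ only for $t\ge e-1$, and there it does give $\int_0^t\mu_s(T)\,ds\le e\,M\log(1+t)$ (note that $M=\sup_p(p-1)\|T\|_p$ yields $\|T\|_p\le M(p-1)^{-1}$, not $M(p-1)^{-1/p}$, but this only affects harmless constants). The small-$t$ regime is not bookkeeping. As $t\to 0^+$ one has $\log(1+t)^{-1}\int_0^t\mu_s(T)\,ds\to \|T\|$, so $\|T\|_{1,\infty}$ dominates the operator norm, and the operator norm is \emph{not} controlled by $\sup_{1<p\le 2}(p-1)\|T\|_p$: for a projection $T$ with $\tau(T)=\delta$ small (available in any $(\cn,\tau)$ with projections of arbitrarily small trace) one computes $\sup_{1<p\le 2}(p-1)\|T\|_p=\delta^{1/2}$, while $\|T\|_{1,\infty}=\delta/\log(1+\delta)\to 1$. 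Thus the two-sided equivalence, as you (and, literally read, the statement) formulate it, fails; the correct assertion is that $\|T\|_{1,\infty}$ is equivalent to $\|T\|+\sup_{1<p\le 2}(p-1)\|T\|_p$ --- consistent with the extra $\|a\|$ term carried by all the Banach-algebra norms in the paper --- or else the supremum defining $\|\cdot\|_{1,\infty}$ must be taken over $t\ge 1$ only. Your proof should add the operator-norm term and dispose of $t\le e-1$ via the trivial bound $\int_0^t\mu_s(T)\,ds\le t\|T\|$.
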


Another important feature of the ideals ${\mathcal
Z}_1=M_{1,\infty}$ is that they support
singular traces. Let $M_{1,\infty}(\cH)$ denote
$M_{1,\infty}$ when $(\clN,\tau)$ is given by the
algebra of all bounded linear operators equipped with standard
trace. In \cite{Dix1}, J.~Dixmier constructed a non-normal
semifinite trace living on the ideal $M_{1,\infty}(\cH)$ using the
weight
$$
\Tr_\omega(T) := \omega \Big( \Big\{ \frac{1}{\log(1+k)}
\sum_{j=1}^k \mu_j(T)\Big\}_{k=1}^\infty  \Big) \ , \ T\geq0,
$$
associated to a translation and dilation invariant state $\omega$
on $\ell^{\infty}$.  The
seminorm $\left\|\cdot\right\|_{{\mathcal Z}_1}$ and all Dixmier traces
$\Tr_\omega$ vanish on $M^0_{1,\infty}(\cH)$ and this provides a
first (albeit tenuous) connection between Dixmier traces and zeta
functions. This connection  runs much deeper however, and will be
explained further in various parts of the present manuscript.

\subsection{The Calderon-Lozanovski\u{\i} construction}
\label{CL}

Let $(X,\Sigma, \mu)$ be a complete $\sigma$-finite measure space. By 
$L_{0}(X, \mu)$ we will denote the set of all measurable functions which are 
finite a.e.. As usual we will identify functions equal almost everywhere. A linear 
subspace of $L_{0}(X, \mu)$ is called a {\it K\"othe function space} if it is normed 
space which is an order ideal in  $L_{0}(X, \mu)$, i.e. if $f\in E$ and $|g|\leq|f|$ a.e., 
then $g\in E$ and $\|g\|\le \|f\|$. A norm-complete K\"othe function space is called 
a {\it Banach function space}. Given a  K\"othe function space $E$ we have that
\[E^{\times}=\Big\{f\in L_{0}(X,\mu): \int_{X}|fg|\,d\mu <\infty \mbox{ for all }g\in E\Big\}\,,\]
is a Banach function space with the Fatou property, which is called the 
associate (or K\"{o}the dual) space of $E$.
Recall that a Banach function space $Y$ has the Fatou property if for any increasing
positive sequence $(x_n)$ in $Y$  with $\sup_n \|x_n\|_X < \infty$ we 
have that $\sup_n x_n \in Y$
and $\| \sup_n x_n\|_Y = \sup_n \|x_n\|_Y$. This property is equivalent to the 
apparently stronger
property that  whenever $\{x_n\}_{_{n\geq 1}}\subseteq Y$,\ 
$x\in L_0(X,\mu),\ x_n\to x$ almost
everywhere and $\sup _n\Vert x_n\Vert _{_{Y}} <\infty $, we have
$x\in Y$ and $\Vert x\Vert _{_{Y}}\leq \liminf _{n\to \infty}\Vert x_n\Vert _{_{Y}}$.
For every Banach function space $Y$ we have the embedding 
$Y \subseteq Y^{\times\times}$  with $\|x\|_{Y^{\times\times}} \leq \|x\|_{Y}$ 
for any 
$x \in Y$. Moreover, $Y = Y^{\times\times}$ with equality of the norms 
if and only if $Y$
has the Fatou property. For a detailed treatment of Banach function 
spaces we 
refer to \cite{KrePet1982-MR649411,LinTza1979-MR540367}. 

We need the Calderon construction of intermediate 
spaces \cite{Calderon-1966}, 
which was studied and extended by Lozanovski{\u\i}.
Let $E$ and $F$ be Banach function spaces on $(X, \mu)$. Then for 
$1< p<\infty$ and $p'=p/(p-1)$,
the Banach function space $E^{\frac 1p}F^{\frac 1{p^{\prime}}}$ is 
defined as the 
space of all $f\in L_{0}(X,\mu)$ such that 
$|f|=|g|^{\frac 1p}|h|^{\frac 1{p^{\prime}}}$ 
for some $g\in E$ and $h\in F$. The norm on 
$E^{\frac 1p}F^{\frac 1{p^{\prime}}}$ is 
defined by
\begin{align*}
\|f\|_{E^{\frac 1p}F^{\frac 1{p^{\prime}}}}
&=\inf \{\|g\|_{E}^{\frac 1p}\|h\|_{F}^{\frac 1{p^{\prime}}}: |f|=|g|^{\frac 1p}|h|^{\frac 1{p^{\prime}}} \text{ for some }g\in E, h\in F\}.
\end{align*}
It is well-known that $E^{\frac 1p}F^{\frac 1{p^{\prime}}}$ is again a Banach 
function space, moreover it has order continuous norm if at least one of 
$E$ and $F$ has order continuous norm. Also $E^{\frac 1p}F^{\frac 1{p^{\prime}}}$ 
has the Fatou property if both $E$ and $F$ have the Fatou property. The above 
construction contains as a special case the so-called $p$-convexification of $E$ by 
taking $F=L^{\infty}(X,\mu)$. We will denote this space by $E^{\frac 1p}$, since as 
sets $E^{\frac 1p}{L^{\infty}(X,\mu)}^{\frac 1{p^{\prime}}}=E^{\frac 1p}$. Note that the 
norm on $E^{\frac 1p}$ is given by $\||f|^{p}\|_{E}^{\frac 1p}$. 

\subsection{Complex interpolation}
\label{interpolation}

To assist the reader we clarify the construction of the ideals
$\Z_p$ in term of complex and real interpolation in the setting of Banach-lattices. 
For the definition of the two  functors of complex interpolation 
$\bar A_{[\theta]}$ (the first method) and  $\bar A^{[\theta]}$ 
(the second method) defined for an arbitrary Banach couple 
$A=(A_0,A_1)$ we refer to  \cite[pp. 88-90]{BergLofstrom}, \cite{Lozanovskii-1976}. 
In general, the two spaces $\bar A_{[\theta]}$  and  $\bar A^{[\theta]}$ are not 
equal, but always $\bar A_{[\theta]}\subset \bar A^{[\theta]}$ with a norm one 
injection. Usually, the main interest is 
attached to the space $\bar A_{[\theta]}$ and the space $\bar A^{[\theta]}$ is 
more or less only a 
technical tool. However, sometimes we also need the second method as well. 
The next result 
follows immediately from Theorem 1 in \cite{Lozanovskii-1976}.

\begin{theorem}
If the Banach spaces, $A_0$ and $A_1$, have the Fatou property then the 
 closed unit ball in $[A_0,A_1]^{[s]}$ coincides
with the closure in $A_0 +A_1$ of the closed unit ball of $A_0^ {1-s}A_1^s$.
\end{theorem}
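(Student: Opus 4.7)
The statement is said to follow immediately from Theorem~1 of \cite{Lozanovskii-1976}, so the plan is to unpack how Lozanovski\u{\i}'s factorization theorem produces the claimed identification of unit balls and to isolate where the Fatou property of $A_0, A_1$ enters. The argument naturally splits into two inclusions: show that the $(A_0+A_1)$-closure of the unit ball of $A_0^{1-s}A_1^s$ is contained in the unit ball of $[A_0,A_1]^{[s]}$, and conversely that every element of the latter unit ball is the $(A_0+A_1)$-limit of elements in the unit ball of $A_0^{1-s}A_1^s$.

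For the first (easy) inclusion, I would fix $f\in A_0^{1-s}A_1^s$ with $\|f\|\leq 1$. By definition of the Calderon--Lozanovski\u{\i} norm, for every $\eps>0$ there is a factorization $|f|=|g|^{1-s}|h|^s$ with $\|g\|_{A_0}^{1-s}\|h\|_{A_1}^s\leq 1+\eps$. I would then define the analytic family
\[
F(z):=\mathrm{sgn}(f)\,|g|^{1-z}|h|^z,\qquad 0\leq \mathrm{Re}(z)\leq 1,
\]
which, interpreted pointwise a.e., is bounded and analytic with values in $A_0+A_1$, satisfies $F(s)=f$, and has boundary traces controlled in $A_0$ resp.\ $A_1$ by $\|g\|_{A_0}$ resp.\ $\|h\|_{A_1}$. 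This realizes $f$ (up to the factor $1+\eps$) as a member of the unit ball of $[A_0,A_1]^{[s]}$. Letting $\eps\downarrow 0$ and invoking the Fatou property---which guarantees that the unit ball of $[A_0,A_1]^{[s]}$ is closed for $(A_0+A_1)$-convergence---gives the inclusion for the entire closure.

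For the reverse inclusion, I would appeal directly to Lozanovski\u{\i}'s factorization result. Given $f$ in the unit ball of $[A_0,A_1]^{[s]}$ and an admissible analytic representative $F$ with $F(s)=f$ and boundary norms bounded by $1+\eps$, the construction in \cite{Lozanovskii-1976} produces, via contour integration combined with pointwise a.e.\ estimates on $|F|$, concrete $g_\eps\in A_0$ and $h_\eps\in A_1$ together with a close approximant $f_\eps$ (within $\eps$ of $f$ in the $A_0+A_1$-norm) such that $|f_\eps|=|g_\eps|^{1-s}|h_\eps|^s$ with $\|g_\eps\|_{A_0}, \|h_\eps\|_{A_1}\leq 1+O(\eps)$. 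Here the Fatou property is essential: it ensures that the pointwise-defined factors $g_\eps, h_\eps$ actually sit in $A_0, A_1$ (rather than merely in some larger completion or bidual), with norm controlled by $\liminf$ of approximants. This is the main obstacle of the proof---everything else is routine analytic continuation and approximation---and it is exactly this step that requires the Fatou hypothesis in its full strength.
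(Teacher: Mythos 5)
The paper offers no proof of this theorem beyond the remark that it ``follows immediately from Theorem 1 in \cite{Lozanovskii-1976}'', and your proposal does essentially the same thing: the easy inclusion is the standard analytic-family argument, and the substantive step (factorizing an element of the unit ball of $[A_0,A_1]^{[s]}$ as $|g|^{1-s}|h|^{s}$ with controlled norms, using the Fatou property to keep the factors in $A_0$ and $A_1$) is deferred to Lozanovski{\u\i}'s construction, exactly as the paper defers it. Your sketch is a reasonable unpacking of that citation, so the two approaches coincide.
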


Combining this result with the fact that the unit ball of the space 
$A_0^ {1-s}A_1^s$ is closed with respect to convergence in measure
and hence also with respect to the norm convergence in $A_0 +A_1$, 
we arrive at the equality
$[A_0,A_1]^{[s]}=A_0^ {1-s}A_1^s$.

\subsection{Marcinkiewicz function and sequence spaces}
These are the main examples of fully symmetric function  and
sequence spaces. 
Let $\Omega$ denote the set of concave functions $\psi : [0,\infty)
\to [0,\infty)$ such that $\lim_{t \to 0^+} \psi(t) = 0$ and
$\lim_{t \to \infty} \psi(t) = \infty$.
Write $x^*$ for the decreasing rearrangement 
of the function $x$: $x^*$ is the right continuous non-increasing 
function whose distribution function coincides with that of 
$|x|$, (see \cite{KrePet1982-MR649411,LinTza1979-MR540367}). 
Then for $\psi \in \Omega$
define the weighted
mean function
$$
a\left( x,t\right) =\frac{1}{\psi \left(
t\right) } \int_{0}^{t}x^{\ast }\left( s\right) ds\quad t>0.
$$
Denote by
$M(\psi)$ the (Marcinkiewicz) space of measurable functions $x$ on
$[0,\infty)$ such that
\begin{equation}
 \|x\|_{M(\psi)} := \sup_{t
> 0} a\left( x,t\right) = \|{a\left( x,\cdot\right)}\|_\infty
 < \infty.
\end{equation}
We assume in this paper that $\psi(t)=O(t)$ when $t\to 0$, which
is equivalent to the continuous embedding $M(\psi)\subseteq
L^\infty([0,\infty))$.
 The definition of the Marcinkiewicz sequence space
$m(\psi)$ of functions on $\mathbb N$
is similar.

\noindent {\bf Example}. Introduce the following functions
$$
\psi_1(t)=\begin{cases}
t\cdot\log2,&0\le t\le 1\\
\log(1+t),& 1\le t <\infty
\end{cases}
\quad\mbox{and} \quad
\psi_p(t)=\begin{cases}
t,&0\le t \le 1\\
t^{1-\frac{1}{p}},& 1\le t <\infty
\end{cases}\,, \quad p>1.
$$
The spaces
$\L^{1,\infty}=\Z_1$ and $\L^{p,\infty}\subset \Z_p,\ p>1,$
are the Marcinkiewicz spaces $M(\psi_1)$ and $M(\psi_p)$
respectively. 
Now, Lozaniovski{\u\i}'s result described in Section \ref{CL}  and the fact that Marcinkiewicz spaces possess the Fatou property yields 
$$
[M(\psi),L^\infty([0,\infty))]^{[s]}=M(\psi)^ {1-s}.
$$
Thus Calderon's second method of complex interpolation applied to a couple $\big(M(\psi), L^\infty\big)$ (on an arbitrary measure space) yields the
$p$-convexification of $M(\psi)$ (with appropriate $p$).

\subsection{Marcinkiewicz operator spaces}

For a definition and discussion of fully symmetric operator spaces and their important 
subclass, operator Marcinkiewicz spaces, we refer e.g. to \cite{LSS, CS,CRSS}. For 
$\mathcal N$ a semifinite von Neumann algebra equipped with a faithful normal 
semifinite trace $\tau$, let $M(\psi)(\mathcal N,\tau)$ be the corresponding operator 
Marcinkiewicz space. It follows from the results given in 
\cite{Dodds-Dodds-Pagter-Interpolation-1989, DoDode1992-MR1188788} that the 
space $[M(\psi)(\mathcal N,\tau), \mathcal N]^{[s]}$ coincides with the (fully symmetric) 
operator space generated by the space $[M(\psi),L^\infty]^{[s]}$ on the von Neumann 
algebra $\mathcal N$. Taking into account the equality above, we see that 
$[M(\psi)(\mathcal N,\tau), \mathcal N]^{[s]}=M(\psi)^ {1-s}(\mathcal N,\tau)$.
It remains to observe that the space $M(\psi)^ {1-s}(\mathcal N,\tau)$ is precisely the 
space described in \cite{CRSS} as $p$-convexification, which is a noncommutative 
counterpart of the Calderon-Lozanovski{\u\i} construction outlined earlier. Thus, 
applying all of the above to the couple
$(\mathcal Z_1, \mathcal N)$, we obtain the equality
$
[\mathcal Z_1, \mathcal N]^{[1-1/p]}=\mathcal Z_p.
$

\section{The heat kernel definition of $\Z_1$}

Later we will focus on the zeta function definition of the ideal $\Z_1$, but
here consider briefly the heat kernel definition. The reason for doing this is to
highlight the similarities and differences between $\Z_1$ and the weak $\L^1$ ideal.

The weak $\L^1$ ideal, denoted $\L_{1,w}$, is given by
\begin{equation}
\label{weak-L1}
\L_{1,w}:=\L_{1,w}(\cn,\tau):=\big\{T\in\cn: \exists C>0\ \mbox{such that}\ \mu_t(T)\leq C/t\big\}.
\end{equation}
A quasi-norm on $\L_{1,w}$ is given by
$\Vert T\Vert_{1,w}:=\inf\big\{C:\,\mu_t(T)\leq C/t\big\}$.
Clearly $\L_{1,w}\subseteq \Z_1$, and in fact it was shown in \cite{CRSS} that the
inclusion is strict.

The (multiplicative) Cesaro mean, $M$, on $\R^*_+$, is defined
on $f:\R^*_+\to [0,\infty)$ by
$$
\big(Mf\big)(\lambda):=\frac{1}{\log(\lambda)}\int_1^\lambda f(t)\frac{dt}{t}.
$$ 
Then, by \cite[Lemma 5.1]{CRSS}, $T\in\Z_1$ if and only if
$\|Mf_T\|_\infty<\infty$ with
$f_T(\lambda):=\lambda^{-1}\tau(e^{-\lambda^{-1}|T|^{-1}})$.\\
A natural question is whether $f_T$ can be unbounded while $Mf_T$ is bounded. 
If so, is it possible that $MMf_T$ is bounded while $Mf_T$ is unbounded (and so on)?

\begin{prop} Let $ T\in\Z_1$ and $0\leq h:\R^*_+\to [0,\infty)$. Then\\
1) if $MMh$ is bounded, so is $Mh$;\\
2)  the function $f_T$ is unbounded and $Mf_T$ is bounded if and only if $T\in\Z_1$ and
$T\not\in \L_{1,w}$;\\
3) the function $f_T$ is bounded if and only if $T\in\L_{1,w}$.
\end{prop}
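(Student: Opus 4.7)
The plan is to prove (3) first, observe that (2) is then immediate from (3) combined with the characterization $T\in\Z_1\iff Mf_T\in L^\infty$ of \cite[Lemma 5.1]{CRSS}, and handle (1) separately as an analytic lemma on the multiplicative Ces\`aro mean. Indeed, under the standing hypothesis $T\in\Z_1$ the function $Mf_T$ is bounded, so (2) reduces to the biconditional ``$f_T$ is unbounded $\iff$ $T\notin\L_{1,w}$,'' which is the contrapositive of (3).

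For (3), I would first rewrite $f_T$ in terms of the generalized singular value function $\mu_t(T)$. Since $g(x):=e^{-1/(\lambda x)}$, extended by $g(0):=0$, is continuous, bounded, and nondecreasing on $[0,\infty)$, the trace--rearrangement identity applied to $|T|\geq 0$ gives
\begin{equation*}
f_T(\lambda) \;=\; \frac{1}{\lambda}\,\tau\bigl(g(|T|)\bigr) \;=\; \frac{1}{\lambda}\int_0^\infty e^{-1/(\lambda\mu_t(T))}\,dt.
\end{equation*}
If $\mu_t(T)\leq C/t$ then $e^{-1/(\lambda\mu_t(T))}\leq e^{-t/(C\lambda)}$ and direct integration yields $f_T\leq C$. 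Conversely, using monotonicity of $\mu_\cdot(T)$ we have, for all $a,\lambda>0$,
\begin{equation*}
f_T(\lambda) \;\geq\; \frac{1}{\lambda}\int_0^a e^{-1/(\lambda\mu_s(T))}\,ds \;\geq\; \frac{a}{\lambda}\,e^{-1/(\lambda\mu_a(T))}.
\end{equation*}
Specialising to $\lambda=1/\mu_a(T)$ (trivial if $\mu_a(T)=0$) gives $f_T(1/\mu_a(T))\geq a\mu_a(T)/e$, so a uniform bound $f_T\leq C'$ forces $\mu_a(T)\leq eC'/a$ for every $a>0$, i.e., $T\in\L_{1,w}$.

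For (1), I pass to additive variables by setting $u=\log\lambda$, $\phi(u):=(Mh)(e^u)$, $\psi(u):=h(e^u)$. The definition of $M$ yields $u\phi(u)=\int_0^u\psi(v)\,dv$, so because $\psi\geq 0$ the function $u\mapsto u\phi(u)$ is nondecreasing, and likewise $(MMh)(e^u)=\frac{1}{u}\int_0^u\phi(v)\,dv$. Assuming $MMh\leq C$, monotonicity of $v\phi(v)$ yields $\phi(v)\geq (u/2)\phi(u/2)/v$ for $v\in[u/2,u]$, whence
\begin{equation*}
Cu \;\geq\; \int_{u/2}^u\phi(v)\,dv \;\geq\; \tfrac{u\log 2}{2}\,\phi(u/2),
\end{equation*}
so $\phi(u/2)\leq 2C/\log 2$ uniformly in $u$, proving $Mh$ bounded.

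The main technical obstacle I foresee lies in (3), namely justifying that the integral $\int_0^\infty e^{-1/(\lambda\mu_t(T))}\,dt$ actually converges for every $\lambda>0$; this is precisely where the hypothesis $T\in\Z_1$ enters. The identification $\Z_1=M_{1,\infty}$ gives $t\mu_t(T)\leq C\log(1+t)$, hence $e^{-1/(\lambda\mu_t(T))}\leq e^{-t/(C\lambda\log(1+t))}$ for large $t$, which is integrable. Once this convergence is secured, all remaining manipulations reduce to the elementary monotonicity estimates above.
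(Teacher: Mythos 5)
Your proposal is correct, and the overall architecture (prove (3), deduce (2) from (3) together with the \cite{CRSS} characterisation $T\in\Z_1\iff Mf_T\in L^\infty$, treat (1) as a separate Ces\`aro lemma) is exactly the paper's. The differences are in the execution of (1) and of the converse half of (3), and in both places your route is arguably cleaner. For (1), the paper integrates by parts to write $(MMh)(x)=\frac{1}{\log x}\int_1^x\log\bigl(\tfrac{\log x}{\log\lambda}\bigr)h(\lambda)\tfrac{d\lambda}{\lambda}$ and then compares $(MMh)(x^2)$ with $(Mh)(x)$ to get $(MMh)(x^2)\geq\tfrac{\log 2}{2}(Mh)(x)$; your logarithmic substitution plus the monotonicity of $u\mapsto u\phi(u)$ yields the same constant $2\|MMh\|_\infty/\log 2$ with less computation. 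For the converse of (3), the paper reduces to the case of a non-atomic trace (or a type I factor) by embedding $\cn$ into $\cn\otimes L^\infty([0,1])$, invokes \cite{ChS} to produce an auxiliary operator $\tilde T$ with $\mu_s(\tilde T)=\mu_t(T)\chi_{[0,t]}(s)$, and evaluates $f_{\tilde T}$ at $\lambda=\mu_t(T)^{-1}$ to get $\mu_t(T)\leq Ce/t$. Your direct estimate $f_T(\lambda)\geq\frac{a}{\lambda}e^{-1/(\lambda\mu_a(T))}$, obtained from the Fack--Kosaki identity $\tau(g(|T|))=\int_0^\infty g(\mu_t(T))\,dt$ and the monotonicity of $\mu_\cdot(T)$, reaches the identical bound $\mu_a(T)\leq eC'/a$ without the case split or the auxiliary operator; this is a genuine simplification, since the rearrangement identity is all the operator theory one actually needs. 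Your remark on convergence of the integral for $T\in\Z_1$ is a harmless bonus (for the ``only if'' direction divergence would anyway mean $f_T$ is unbounded).
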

\begin{proof} Using the definition and integrating by parts we have
\begin{align*}
\big(MMh\big)(x)&=\frac{1}{\log(x)}\int_1^x\,\frac{1}{\log(y)}\,\int_1^y\,
h(\lambda)\,\frac{d\lambda}{\lambda}\,\frac{dy}{y}\\
&=\frac{\log(\log(x))}{\log(x)}\int_1^x\,h(\lambda)\,\frac{d\lambda}{\lambda}
-\frac{1}{\log(x)}\int_1^x\,\log(\log(\lambda))\,h(\lambda)\,\frac{d\lambda}{\lambda}\\
&=\frac{1}{\log(x)}\int_1^x\,\log\left(\frac{\log(x)}{\log(\lambda)}\right)\,h(\lambda)\,
\frac{d\lambda}{\lambda}.
\end{align*}
Now consider $\big(MMh\big)(x^2)$: 
\begin{align*}
(MMh)(x^2)&=\frac{1}{2\log(x)}\int_1^{x^2}\log\left(\frac{2\log(x)}{\log(\lambda)}\right)\,
h(\lambda)\,\frac{d\lambda}{\lambda}\\
&\geq \frac{1}{2\log(x)}\int_1^{x}\log\left(\frac{2\log(x)}{\log(\lambda)}\right)\,
h(\lambda)\,\frac{d\lambda}{\lambda}
\geq \frac{\log(2)}{2\log(x)}\int_1^{x}\,h(\lambda)\,\frac{d\lambda}{\lambda}=\frac{\log(2)}{2}\,(Mh)(x).
\end{align*}
Hence $\big(MMh\big)(x^2)\geq \frac{\log(2)}{2}\,\big(Mh\big)(x)$ and if $MMh$ is bounded, so too 
is $Mh$ proving part $1)$.

For parts $2)$ and $3)$, since we know that $T\in\Z_1$ if and only if $Mf_T$ is bounded,
it suffices to show that $T\in\L_{1,w}$ if and only if $f_T$ is bounded. So suppose that
$T\in\L_{1,w}$ with $\Vert T\Vert_{1,w}\leq C$ with $C>0$. Then
\begin{align*} 
\sup_\lambda \frac{1}{\lambda}\tau(e^{-\lambda^{-1}|T|^{-1}})
&=\sup_\lambda \frac{1}{\lambda}\int_0^\infty\,e^{-\lambda^{-1}\mu_t(T)^{-1}}\,dt\leq \sup_\lambda \frac{1}{\lambda}\int_0^\infty\,e^{-\lambda^{-1}tC^{-1}}\,dt=C.
\end{align*}
For the converse, we may assume that $\cn$ is  either a type $I$ factor with the standard trace or else that the trace $\tau$ is non-atomic. The general case will then follow by considering the embedding of $\cn$ into $\cn\otimes L^\infty([0,1])$.\\
So, suppose that $\sup_\lambda f_T(\lambda)=C$.
 By \cite{ChS}, in the case when $\cn$ is non-atomic, there exists $\tilde T$ in $\cn$ such that $\mu_t(\tilde T)=\mu_t(T)\chi_{[0,t]}$. 
Define then $\tilde{f}_{T,t}(\lambda):=\frac{1}{\lambda}\tau(e^{-\lambda^{-1}\tilde{T}^{-1}})$.
Then for all $t\geq 0$ we have 
$$
\sup_\lambda \frac{1}{\lambda}\,t\,e^{-\lambda^{-1}\mu_t(T)^{-1}}=
\sup_\lambda\tilde{f}_{T,t}(\lambda)\leq \sup_\lambda f_T(\lambda)=C.
$$
In particular, choosing $\lambda=\mu_t(T)^{-1}$ we find that
$\mu_t(T)\leq Ce/t$. The case where $\cn$ is a type $I$ factor is entirely similar.
\end{proof}

Thus we have the curious situation that if $h$ is unbounded, either one application of 
the Cesaro mean $M$, will produce a bounded function, or if not, then successive applications
of the Cesaro mean will never produce a bounded function. 
Moreover, the boundedness of $f_T$ singles out the ideal that arises in practise, namely
$\L_{1,w}$. The use of Cesaro invariant functionals to produce Dixmier traces has 
enlarged the attention of noncommutative integration theory to $\Z_1$, despite the
fact that $\Z_1$ is unnatural from the point of view of most applications.

The distinction between boundedness and unboundedness of the function $f_T$ is
distinct from the issue of measurability, namely whether the value of a given Dixmier
trace is independent of the choice of 
(Cesaro invariant) functional $\omega$. A sufficient condition 
for measurability is the existence of the limit 
$
\lim_{\lambda\to\infty}(Mf_T)(\lambda),
$
however whether this condition is necessary is not known except in special cases; 
see \cite{LSS}.

\section{Banach algebras for nonunital integration}

Much of the recent motivation for noncommutative integration theories comes 
from spectral triples $(\A,\HH,\D,\cn,\tau)$ and their use in index theory. In Section 
\ref{sec:nonunital-case}, we review the definitions, but here just remind the reader
that the commonest situations are when using the self-adjoint unbounded 
operator $\D$, the trace $\tau$ and suitable $s,\,p,\,t\in\R$, one 
(or more) of  the maps
$$
a\mapsto \tau(a(1+\D^2)^{-s/2}),\quad a\mapsto \tau_\omega(a(1+\D^2)^{-p/2}),
\quad a\mapsto \tau(ae^{-t\D^2})
$$
provides a sensible functional on the algebra $\A$. As 
we will explain, there are close links between these various situations. 

Observe that for $p\geq 1$, the operator $(1+\D^2)^{-p/2}$ is positive, injective
and has norm $\leq 1$. These are the main properties we use and in the following
text consider an operator $G$ with these properties. In Section 
\ref{sec:zeta-and-dixy} we will also need some conditions on commutators $[G,a]$
between $G$ and algebra elements $a$. In Section \ref{sec:nonunital-case} we
will verify these conditions for suitable spectral triples and $G=(1+\D^2)^{-p/2}$.

\subsection{Preliminaries}
Let $\cn$ be a semifinite von Neumann algebra with  
$\tau$, a faithful, normal, semifinite trace. Suppose also that $G$ is a positive and injective operator in $\cn$. We make neither $\tau$-compactness nor summability
hypotheses on $G$ itself. Instead we consider that
a natural condition of integrability for $a\in \cn$, relative to $G$,  would be to ask that  
\begin{equation}
aG^s\in\LL^1\,,\quad\forall s>1\,.
\label{eq:left}
\end{equation}
If we consider $a\geq 0$ then the condition \eqref{eq:left}
is equivalent to
\begin{equation}
G^s a\in\LL^1\,,\quad\forall s>1\,,
\label{eq:right}
\end{equation}
since $\LL^1$ is a $*$-ideal. For $a\geq 0$ satisfying these equivalent conditions,
it is shown by Bikchentaev in  \cite{Bik},
that $a$ also satisfies the equivalent conditions
\begin{equation}
G^{s/2}aG^{s/2}\in\LL^1,\qquad\Longleftrightarrow\qquad
a^{1/2}G^sa^{1/2}\in\LL^1.
\label{eq:sym}
\end{equation}
That is \eqref{eq:left}$\Leftrightarrow$\eqref{eq:right}$\Rightarrow$\eqref{eq:sym}, i.e. integrability of $a\geq 0$ implies square integrability of $a^{1/2}$.
The symmetry of the conditions in \eqref{eq:sym} make them technically much easier 
to work with, however we point out the following.
\begin{lemma}\label{lem:inequiv} 
The two equivalent conditions in \eqref{eq:sym} do not imply the two
equivalent conditions \eqref{eq:left} and \eqref{eq:right}.
\end{lemma}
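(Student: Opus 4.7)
The plan is to produce an explicit counterexample. Take $\cn=B(\HH)$ with $\HH=\ell^2(\mathbb N)$ and $\tau$ the standard trace. The guiding observation is a Weyl-type gap: for $a,G\geq 0$ the non-zero spectrum of $aG^s$ coincides with that of the positive operator $a^{1/2}G^s a^{1/2}$, so
$$\sum_n|\lambda_n(aG^s)|=\tau(a^{1/2}G^sa^{1/2})=\|G^{s/2}aG^{s/2}\|_1,$$
whereas Weyl's inequality only gives $\|aG^s\|_1=\sum_n\mu_n(aG^s)\geq\sum_n|\lambda_n(aG^s)|$, with potentially unbounded gap when $aG^s$ is far from being normal. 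The counterexample will amplify this gap by arranging $aG^s$ to be a large orthogonal sum of non-self-adjoint rank-one blocks.

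Concretely, let $G$ be diagonal in the canonical basis $\{e_k\}_{k\geq 1}$ with eigenvalues $g_{2n-1}:=(2n-1)^{-1/2}$ and $g_{2n}:=(2n)^{-1}$, so that $G\in\cn_+$ is injective with $\|G\|\leq 1$. Set $\epsilon_n:=n^{-3/4}$ and $v_n:=\sqrt{\epsilon_n}\,e_{2n-1}+\sqrt{1-\epsilon_n}\,e_{2n}$, an orthonormal family lying in the two-dimensional blocks $W_n:=\mathrm{span}\{e_{2n-1},e_{2n}\}$. Take $a:=P$ to be the orthogonal projection onto $\overline{\mathrm{span}}\{v_n\}_{n\geq 1}$. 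Because each $v_n$ is in $W_n$ and $G$ preserves every $W_n$, the operator $PG^s$ splits as the orthogonal direct sum of the rank-one operators $|v_n\rangle\langle G^sv_n|$.

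Verification then reduces to comparing two block-diagonal series. The symmetric trace norm is
$$\|G^{s/2}PG^{s/2}\|_1=\tau(PG^s)=\sum_n\bigl(\epsilon_ng_{2n-1}^s+(1-\epsilon_n)g_{2n}^s\bigr),$$
whose summands are $O(n^{-3/4-s/2})+O(n^{-s})$, so the series is finite for every $s>1$. The one-sided trace norm is
$$\|PG^s\|_1=\sum_n\|G^sv_n\|=\sum_n\sqrt{\epsilon_ng_{2n-1}^{2s}+(1-\epsilon_n)g_{2n}^{2s}},$$
whose summands behave like $n^{-3/8-s/2}$ as $n\to\infty$ (the first term under the square root dominates whenever $s>3/4$), so this series diverges for every $s\in(1,5/4]$. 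The main obstacle is the joint tuning of the three scales -- the decay rates of $g_{2n-1}$, $g_{2n}$, and $\epsilon_n$ -- so that the symmetric sum just barely converges for every $s>1$ while the one-sided sum diverges for some $s>1$. The mechanism being exploited is that the small weight $\epsilon_n$ enters the symmetric norm linearly, but enters the one-sided norm only under a square root, multiplying the contribution by $\epsilon_n^{-1/2}\to\infty$, which is just enough to destroy summability.
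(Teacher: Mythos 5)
Your construction is correct and complete: the blocks $W_n$ are $G$-invariant and mutually orthogonal, so $PG^s$ is an orthogonal sum of rank-one operators with singular values $\|G^sv_n\|$, and the two series you write down are exactly $\tau(G^{s/2}PG^{s/2})$ and $\|PG^s\|_1$. The exponent bookkeeping checks out: the symmetric trace is $\sum_n O(n^{-3/4-s/2})+O(n^{-s})$, finite for every $s>1$, while $\|G^sv_n\|\sim c\,n^{-3/8-s/2}$ so that $\|PG^s\|_1=\infty$ for $1<s\le 5/4$, which defeats the ``for all $s>1$'' requirement in \eqref{eq:left}. Your $G$ is positive, injective, of norm $\le 1$, and $a=P$ is positive, so all the standing hypotheses of the section are met.

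Your route is, however, genuinely different from the paper's. The paper does not build an example from scratch; it starts from the (presumed available) phenomenon of a projection $T$ and an operator $\cd\ge 1$ with $T\cd^{-1}T\in\LL^1$ but $\sqrt{T\cd^{-1}T}\notin\LL^1$, and then uses a $2\times 2$ matrix trick --- an off-diagonal ``square root'' $\tilde\cd$ with $\tilde\cd^{2}=\mathrm{diag}(\cd,\cd)$ --- to arrange $\tilde T\,\tilde\cd^{-1}\,\tilde T=|\tilde\cd^{-1}\tilde T|^2$, so that the symmetric product is trace class while the one-sided product $\tilde\cd^{-1}\tilde T$ is not. That argument is a short, transferable reduction (it works verbatim with $\LL^1$ replaced by $\Z_1$, as the paper notes), but it leaves the input example implicit and its $\tilde\cd^{-1}$ is self-adjoint rather than positive. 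Your example is fully explicit, lives directly in $(\B(\ell^2),\Tr)$ with a positive injective $G$, and exposes the precise mechanism --- the weight $\epsilon_n$ enters the symmetric norm linearly but the one-sided norm only through a square root --- at the cost of being tied to this particular diagonal construction. Both are valid; yours is more self-contained, the paper's is more modular.
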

\begin{proof} We provide a counterexample.
Indeed, choose $\cd\geq1$ affiliated with $\cn$ and pick
$T\in\cn$, a projection, such that $T\cd^{-1}T\in\LL^1$ but
with $\sqrt{T\cd^{-1}T}\notin\LL^1$. Now, define
$\tilde\cd$ affiliated with $M_2(\cn)$ and $\tilde T\in M_2(\cn)$, both acting on
$\tilde \cH=\cH\oplus\cH$, by
$$
\tilde\cd:=
\begin{pmatrix}
1&\sqrt{\cd-1}\\
\sqrt{\cd-1}&-1
\end{pmatrix}
\,,\quad
\tilde T:=
\begin{pmatrix}
T&0\\
0&0
\end{pmatrix}.
$$
Of course, $\tilde \cd$ is invertible with
$$
\quad\tilde\cd^{-1}=
\begin{pmatrix}
\cd^{-1}&\sqrt{\cd^{-1}-\cd^{-2}}\\
\sqrt{\cd^{-1}-\cd^{-2}}&-\cd^{-1}
\end{pmatrix}.
$$
We see at once that
$$
\tilde T\,\tilde \cd^{-1}\,\tilde T=
\begin{pmatrix}
T\cd^{-1}T&0\\
0&0
\end{pmatrix}=\big|\tilde \cd^{-1}\,\tilde T\big|^2,
$$
so that only $\tilde T\,\tilde \cd^{-1}\,\tilde T$ belongs to 
$ \LL^1\big(M_2(\cn),\tau\otimes\tr\big)$,
not $\tilde \cd^{-1}\,\tilde T$ (nor $\tilde T\,\tilde \cd^{-1})$. 
\end{proof}
Earlier examples of this sort and additional information can be found in 
\cite{Bik2} and references therein.
Replacing $\LL^1$ with $\Z_1$ we obtain a counterexample for the
corresponding statement in $\Z_1$. 
Lemma \ref{lem:inequiv}  shows that when we formulate or obtain conditions 
in either of the symmetric forms in
\eqref{eq:sym}, {\em we can not get back to the unsymmetric forms in} 
\eqref{eq:left} and 
\eqref{eq:right}. By a result of one of us, \cite[Lemma 5.10]{LPS}, similar results 
hold for a wide range of operator ideals, in particular for $a,\,G\in\cn$ both 
positive we have
$$
aG\in \Z_1\Leftrightarrow Ga\in\Z_1\ \ \stackrel{\Longrightarrow}{\not\Leftarrow}
\ \ a^{1/2}Ga^{1/2}\in \Z_1
\Leftrightarrow G^{1/2}aG^{1/2}\in \Z_1.
$$
So again, whilst it is technically easier to work with the symmetrised conditions, we can
not recover the unsymmetrised conditions from them. 
When we consider functionals on $\cn$ of the form 
$$
\cn_+\ni a\mapsto\tau(G^{s/2}aG^{s/2}),\ \ \ \ s>1\,,
$$
we must confront the fact that we are dealing with a weight. When we determine the 
domain, it will not be an ideal, but rather a Hilbert algebra. We will not, however, 
go through the details of proving that our algebras satisfy the Hilbert algebra 
axioms as we do not need them here
and will only work with a Banach algebra completion. 
Nevertheless we make use of
some Hilbert algebra ideas and this explains in part the 
`square summable' flavor of some of our hypotheses and results.

We also observe that the symmetric assumption `$a^*G^s a\in \L^1$ for all $s>1$',
equivalent to  `$G^s a\in \L^2$ for all $s>1/2$', is far easier to verify than the 
unsymmetrised condition   `$G^s a\in \L^1$ for all $s>1$'. Indeed,  
an $\L^2$-norm is typically easier to compute than 
an $\L^1$-norm. 

Moreover, in the type I case, that is for $\cn=\B(\cH)$ 
equipped with the operator trace, there is a useful way to check  if the 
product $G^s a$ is  Hilbert-Schmidt. This is a simple corollary of the `little 
Grothendieck Theorem', the Grothendieck factorization principle (see \cite{Defant}). 
The latter states that an operator is Hilbert-Schmidt if and only if it factors through a 
Banach space of the form $L^1(X,\mu)$, for 
$(X,\mu)$ a $\sigma$-finite measure space, or of the form $C_0(K)$, for $K$ a 
locally compact topological space. 
In particular, this applies for a large class of locally compact commutative
spaces.   Let $M$ be a $p$-dimensional complete Riemannian 
manifold and $\Delta$ the associated scalar Laplace-Beltrami operator.
Then, under mild assumptions of the behavior of the geometry at infinity
(for instance those given in \cite{GIV}),
 one readily deduces that the operator 
$M_f(1+\Delta)^{-ps/4}$, is Hilbert-Schmidt whenever $f\in L^2$ and $s>1$. 
Indeed, $M_f$ is obviously continuous from $L^\infty(M)$ (which 
is of the form $C_0(K)$ by the Gelfand-Na{\u\i}mark duality)  to 
$L^2(M)$, while $(1+\Delta)^{-ps/4}$ continuously maps $L^2(M)$ to 
$L^\infty(M)$. This can be shown by standard heat-kernel methods
(see \cite[Lemma 4.5]{GIV}).

\subsection{Banach Algebras from  zeta-function and heat-kernel}
\begin{definition} Given $G$, a positive and injective element of $\cn$,
define the two families  of (possibly infinite)  bilinear functionals on $\cn$
$$
\zeta(a,b;s):=\tau\big(aG^sb\big)\,,\quad g(a,b;\lambda):=\frac1\lambda
\tau\big(ae^{-\lambda^{-1}G^{-1}}b\big)\,,\quad a,b\in\cn\,.
$$
Then, introduce the  following semi-norms on $\cn$
\begin{align*}
\|a\|_\zeta:=\sup_{1\leq s\leq 2}\,\sqrt{s-1}\zeta(a^*,a;s)^{1/2}\,, \quad
\|a\|_{\HK}:=\big\|M g(a^*,a;.)\big\|_\infty^{1/2}\,,
\end{align*}
where $M$ denotes the C\'esaro mean of the multiplicative group $\mathbb R_+^*$.
\end{definition}
Note that since $\langle a,b\rangle _{\zeta,s}:=\zeta(a^*,b,s)$ and
$\langle a,b\rangle_{\HK,\lambda}:=g(a^*,b;\lambda)$ are inner products,
it follows at once that $\|.\|_\zeta$ and $\|.\|_{\HK}$ are positively 
homogeneous and satisfy the triangle
inequality.  But they are also injective maps. Indeed, if for instance, 
$\|a\|_\zeta=0$, then by the faithfulness of $\tau$, we deduce that 
$0=a^*G^sa=|G^{s/2}a|^2$ and thus  $G^{s/2}a=0$ too. Then, from 
the injectivity of $G$, we get that $a=0$. A similar result holds for 
$\|a\|_{\HK}$. This shows that 
$\|.\|_\zeta$ and $\|.\|_{\HK}$ are true norms, not only semi-norms.

By the functional calculus, we see that if $a^*G^sa$ is trace class  for 
some $s>1$, so is $a^*e^{-tG^{-1}}a$ for all $t>0$. Indeed, we have the operator inequality
\begin{equation}
\label{TRE}
a^*e^{-tG^{-1}}a\leq \|G^{-s}e^{-tG^{-1}}\| \,a^*G^sa\leq(s/t)^se^{-s}\,a^*G^sa.
\end{equation}
The
finiteness of such semi-norms is closely related to the zeta function and heat
kernel characterizations of the ideal $\Z_p$ in the unital
case, \cite{CRSS}.
\begin{prop}
\label{B}
Let $B_\zeta(G)$ (respectively $B_{\HK}(G)$) be the normed subset of $\cn$  
relative  to the norm
$\|a\|_\zeta+\|a^*\|_\zeta+\|a\|$ (respectively  $\|a\|_{\HK}+\|a^*\|_{\HK}+\|a\|$). 
Then,  $B_\zeta(G)$ and $B_{\HK}(G)$ are Banach $*$-algebras.
\end{prop}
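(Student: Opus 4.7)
The plan is to verify, for each of $B_\zeta(G)$ and $B_{\HK}(G)$, three properties: (i) the $*$-operation is isometric, (ii) the norm is submultiplicative, and (iii) the space is complete. Property (i) is immediate since each of the three summands defining the norm is symmetric under $a\leftrightarrow a^*$, and both $\|\cdot\|_\zeta$ and $\|\cdot\|_{\HK}$ have already been shown to be genuine norms (not merely seminorms) via faithfulness of $\tau$ and injectivity of $G$.

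For submultiplicativity in $B_\zeta(G)$, the decisive observation is that $\zeta(a^*,a;s)=\tau(a^*G^sa)=\|G^{s/2}a\|_2^2$. Taking adjoints inside the $\L^2$-norm and applying the Hilbert--Schmidt ideal inequality $\|XY\|_2\le\|X\|\,\|Y\|_2$ give
$$
\|G^{s/2}ab\|_2=\|b^*a^*G^{s/2}\|_2\le\|b\|\,\|a^*G^{s/2}\|_2=\|b\|\,\|G^{s/2}a\|_2.
$$
Multiplying by $\sqrt{s-1}$ and taking the supremum over $s\in[1,2]$ yields $\|ab\|_\zeta\le\|b\|\,\|a\|_\zeta$, and the same inequality applied to $b^*a^*$ gives $\|(ab)^*\|_\zeta\le\|a\|\,\|b^*\|_\zeta$. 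Together with $\|ab\|\le\|a\|\|b\|$, these three bounds appear among the nine cross terms of the product $(\|a\|_\zeta+\|a^*\|_\zeta+\|a\|)(\|b\|_\zeta+\|b^*\|_\zeta+\|b\|)$, so the full norm is submultiplicative. The same factorization, applied pointwise in $\lambda$ to $g(a^*,a;\lambda)=\lambda^{-1}\|e^{-\lambda^{-1}G^{-1}/2}a\|_2^2$ and then combined with the monotonicity of the C\'esaro mean $M$, gives $\|ab\|_{\HK}\le\|b\|\,\|a\|_{\HK}$ and the adjoint companion inequality for $B_{\HK}(G)$.

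For completeness, let $(a_n)$ be Cauchy in $B_\zeta(G)$. The operator-norm piece yields a limit $a\in\cn$. For each fixed $s\in(1,2]$ the inequality $\sqrt{s-1}\|G^{s/2}(a_n-a_m)\|_2\le\|a_n-a_m\|_\zeta$ makes $(G^{s/2}a_n)$ Cauchy in $\L^2(\cn,\tau)$; since operator-norm and $\L^2$-convergence both imply convergence in measure, the $\L^2$-limit must coincide with $G^{s/2}a$. A Fatou-type bound then gives $\sqrt{s-1}\|G^{s/2}(a_n-a)\|_2\le\liminf_m\|a_n-a_m\|_\zeta$ uniformly in $s\in[1,2]$, whence $\|a_n-a\|_\zeta\to 0$. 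The adjoint seminorm is handled identically, producing convergence in the full norm and $a\in B_\zeta(G)$. For $B_{\HK}(G)$ the argument is parallel: finiteness of $\|a_n\|_{\HK}$ forces $g(a_n^*,a_n;\lambda)<\infty$ for a.e.\ $\lambda$ (a countable union of null sets being null), so the $\L^2$-Fatou argument applies pointwise in $\lambda$ inside the integral defining $M$, and taking the supremum over the C\'esaro variable $x$ gives $\|a_n-a\|_{\HK}\to 0$.

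The principal obstacle is the asymmetry of the submultiplicativity bound: $\|\cdot\|_\zeta$ alone is not submultiplicative because the Hilbert--Schmidt ideal inequality controls one factor only in operator norm. This is precisely why the norm on $B_\zeta(G)$ bundles $\|\cdot\|_\zeta$, $\|\cdot^*\|_\zeta$ and $\|\cdot\|$ together, and why the completeness argument must track convergence in all three pieces simultaneously; for $B_{\HK}(G)$ the only additional care is the passage from boundedness of $Mg$ to $\lambda$-a.e.\ finiteness of $g$, which is used to legitimate the pointwise Fatou step.
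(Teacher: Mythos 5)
Your proof is correct, and for the algebra structure it is essentially the paper's argument in different clothing: the paper bounds $\tau\big((ab)^*f(G)ab\big)\leq\|b\|^2\tau\big(a^*f(G)a\big)$ for positive $f$, which is exactly your Hilbert--Schmidt ideal inequality $\|f(G)^{1/2}ab\|_2\leq\|b\|\,\|f(G)^{1/2}a\|_2$ applied with $f(G)=G^s$ and $f(G)=e^{-\lambda^{-1}G^{-1}}$; the same three cross terms among the nine then give submultiplicativity of the bundled norm. Where you genuinely go further is completeness. The paper's argument only shows that the operator-norm limit $a$ of a Cauchy sequence satisfies $\|a\|_\zeta<\infty$ (and even there it silently invokes a Fatou-type property to pass from $\|a\|_\zeta=\infty$ to $\lim_n\|a_n\|_\zeta=\infty$); it never verifies that $\|a_n-a\|_\zeta\to 0$, which is what completeness actually requires. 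Your argument supplies precisely this: for fixed $s$ the sequence $G^{s/2}a_n$ is $\L^2$-Cauchy, its $\L^2$-limit is identified with $G^{s/2}a$ by uniqueness of limits in measure, and the resulting bound $\sqrt{s-1}\,\|G^{s/2}(a_n-a)\|_2\leq\liminf_m\|a_n-a_m\|_\zeta$, uniform in $s$, closes the loop; the pointwise-in-$\lambda$ version does the same for $B_{\HK}$. So your proof is not just valid but repairs a genuine lacuna in the published one.
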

When no confusion can occur, we write $B_\zeta$ and $B_{\HK}$ instead of
$B_\zeta(G)$ and $B_{\HK}(G)$.
\begin{proof}[Proof of Proposition \ref{B}]
It is clear that $B_\zeta$ and $B_{\HK}$ are normed linear spaces with 
symmetric norms.
Let us first show that they are sub-multiplicative. For a positive numerical 
function $f$, we have:
$$
\tau\big((ab)^*f(G)ab\big)\leq\|b\|^2\|a^*f(G)a\|_1=
\|b\|^2\tau\big(a^*f(G)a\big)\,.
$$
This clearly entails that
$\|ab\|_\zeta\leq\|b\|\|a\|_\zeta$ and $ \|(ab)^*\|_\zeta\leq\|a\|\|b^*\|_\zeta$,
and thus
\begin{align*}
\|ab\|_\zeta+\|(ab)^*\|_\zeta+\|ab\|&\leq \|b\|\|a\|_\zeta+\|a\|\|b^*\|_\zeta+\|a\|\|b\|\\
&\leq\big(\|a\|_\zeta+\|a^*\|_\zeta+\|a\|\big)\big(\|b\|_\zeta+\|b^*\|_\zeta+\|b\|\big)\,.
\end{align*}
A similar statement holds for the norm of $B_{\HK}$.

It remains to prove that $B_\zeta$ and $B_{\HK}$ are complete for the norm topology
we have introduced.
But this is an easy exercise since a Cauchy sequence $(a_n)_{n\in\mathbb N}$ in $B_\zeta$  is convergent as an element of $\cn$ (since the norm of $B_\zeta$ dominates the one of $\cn$). Call $a\in\cn$ such a limit and suppose that $\|a\|_\zeta=\infty$, i.e. $\lim \|a_n\|_\zeta=\infty$. This is a contradiction with the assumption of $(a_n)_{n\in\mathbb N}$ being $B_\zeta$-Cauchy since by the second triangle inequality we have $\big|\|a_n\|_\zeta-\|a_m\|_\zeta\big|\leq \|a_n-a_m\|_\zeta$.
The arguments for $B_{\HK}$ are similar.
\end{proof}
The algebras $B_\zeta,\,B_{\HK}$ need not  be uniformly closed, nor
weakly closed, nor be ideals (even one-sided) in $\cn$.
We now prove that these two notions of `square integrability'  in fact coincide.
\begin{theorem}
\label{NE}
The norms $\|.\|_\zeta$  and $\|.\|_{\HK}$ are equivalent.
\end{theorem}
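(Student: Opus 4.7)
The plan is a spectral reduction to a classical Tauberian comparison. The noncommutativity of $a$ and $G$ is completely absorbed into a single positive scalar measure on $[1,\infty)$, after which the equivalence of the two norms becomes a statement about Mellin and Laplace transforms of that measure, and follows from the commutative case of results already cited in the paper.

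Fix $a\in\cn$. Since $G\geq 0$ and $\|G\|\leq 1$, the operator $G^{-1}$ is positive and bounded below by $1$. Using its spectral resolution together with $\tau$, define the positive Borel measure $\mu_a$ on $[1,\infty)$ by $\mu_a(B):=\tau\bigl(a^*\chi_B(G^{-1})a\bigr)$. The functional calculus yields $\tau(a^*G^sa)=\int_1^\infty \nu^{-s}\,d\mu_a(\nu)$ and $\tau(a^*e^{-tG^{-1}}a)=\int_1^\infty e^{-t\nu}\,d\mu_a(\nu)$. Interpret $\mu_a$ as a trace on the abelian semifinite algebra $\tilde\cn:=L^\infty([1,\infty),\mu_a)$ with $\tilde\tau(\varphi):=\int \varphi\,d\mu_a$, and let $T\in\tilde\cn$ be multiplication by $\nu^{-1}$. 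Then $0\leq T\leq 1$, $\tilde\tau(T^s)=\tau(a^*G^s a)$, and $\tilde\tau(e^{-\lambda T^{-1}})=\tau(a^*e^{-\lambda G^{-1}}a)$. Applying the change of variables from the beginning of Section~3 gives $\|a\|_\zeta^2=\sup_{s\in(1,2]}(s-1)\tilde\tau(T^s)$ and $\|a\|_{\HK}^2=\|Mf_T\|_\infty$, with $f_T(\lambda):=\lambda^{-1}\tilde\tau(e^{-\lambda^{-1}T^{-1}})$ as in the excerpt's Section~3.

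It therefore suffices to prove, for the positive contraction $T$ in $(\tilde\cn,\tilde\tau)$, the equivalence $\sup_{s\in(1,2]}(s-1)\tilde\tau(T^s)\asymp \|Mf_T\|_\infty$. By \cite[Lemma~5.1]{CRSS} the right-hand side is equivalent to $\|T\|_{M_{1,\infty}(\tilde\cn,\tilde\tau)}^2$, and by Theorem~\ref{great} (i.e.\ \cite[Theorem~4.5]{CRSS}) the Marcinkiewicz norm of $T$ is equivalent to $\sup_{p>1}(p-1)\|T\|_p$. It then remains to compare $\sup_{s\in(1,2]}(s-1)\tilde\tau(T^s)$ with $\bigl(\sup_{p>1}(p-1)\|T\|_p\bigr)^2$: near $s=1$ the factor $\tilde\tau(T^s)^{-(s-1)/s}$ tends to $1$ whenever $(s-1)\tilde\tau(T^s)$ stays bounded, giving $\tilde\tau(T^s)^{1/s}\sim\tilde\tau(T^s)$, while for $s$ bounded away from $1$ the estimate $(s-1)\tilde\tau(T^s)\leq\tilde\tau(T)$, using $\|T\|_\infty\leq 1$, handles the remaining range since $\tilde\tau(T)\leq C\|T\|_{M_{1,\infty}}$ in this bounded regime.

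The main obstacle is precisely this last quantitative comparison: the $\zeta$-norm in Theorem~\ref{great} involves the $p$-th roots $\|T\|_p=\tilde\tau(T^p)^{1/p}$, whereas $\|a\|_\zeta^2$ is defined with no $s$-th root, so one must track constants carefully between the two slightly different normalisations and verify the equivalences survive on the relevant ranges of exponents. The conceptual content of the proof, however, is the spectral reduction: once the problem is transported to the scalar measure $\mu_a$, the purely commutative Tauberian machinery developed in \cite{CRSS} closes the argument.
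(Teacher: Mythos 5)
Your first two paragraphs are sound: defining $\mu_a(B)=\tau\bigl(a^*\chi_B(G^{-1})a\bigr)$ and transporting everything to the abelian algebra $L^\infty([1,\infty),\mu_a)$ is legitimate (normality of $\tau$ gives countable additivity, and $\mu_a$ is locally finite whenever either norm is finite, which is the only case that matters), and it correctly identifies $\|a\|_\zeta^2=\sup_{1<s\le2}(s-1)\tilde\tau(T^s)$ and $\|a\|_{\HK}^2=\|Mf_T\|_\infty$. This is in fact what the paper's proof does implicitly: every operator inequality used there has the form $a^*f(G)a\le\|f/h\|_\infty\,a^*h(G)a$, i.e.\ a pointwise scalar bound integrated against $\mu_a$. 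So the reduction buys conceptual clarity but does not remove the analytic work.

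The gap is in your third paragraph, where you try to close the scalar comparison by routing through the Marcinkiewicz norm; this fails for two separate reasons. First, homogeneity: $\|Mf_T\|_\infty$ and $\sup_s(s-1)\tilde\tau(T^s)$ are essentially $1$-homogeneous in $T$, whereas $\|T\|_{M_{1,\infty}}^2$ and $\bigl(\sup_p(p-1)\|T\|_p\bigr)^2$ are $2$-homogeneous, so the squares you introduce cannot be right (concretely, if $\tilde\tau(T^s)=K/(s-1)$ then $\sup_{1<s\le 2}(s-1)\tilde\tau(T^s)=K$ while $\bigl(\sup_p(p-1)\|T\|_p\bigr)^2\sim K^2$). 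Second, and more seriously, even to the first power $\|T\|_{1,\infty}$ is not equivalent to either quantity: $\|T\|_{1,\infty}\ge\lim_{t\to0^+}\sigma_t(T)/\log(1+t)=\|T\|_\infty$, while both $\sup_s(s-1)\tilde\tau(T^s)$ and $\|Mf_T\|_\infty$ can be made arbitrarily small with $\|T\|_\infty=1$ (take $\tilde\cn=\C$, $\tilde\tau=\epsilon\cdot\mathrm{id}$, $T=1$: the zeta and heat--kernel quantities are both $\asymp\epsilon$, but $\|T\|_{1,\infty}\ge1$). In addition, Lemma 5.1 of \cite{CRSS} as invoked is only an if-and-only-if statement, not a quantitative norm equivalence, and your fallback bound $(s-1)\tilde\tau(T^s)\le\tilde\tau(T)$ is vacuous because $\tilde\tau(T)=\tau(a^*Ga)$ is typically infinite for $a\in B_\zeta$. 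What is actually needed after your reduction is the direct two-sided Abelian/Tauberian estimate between $\sup_{1<s\le2}(s-1)\int\nu^{-s}\,d\mu_a(\nu)$ and $\sup_{x>1}\frac{1}{\log x}\int_1^x\lambda^{-2}\int e^{-\nu/\lambda}\,d\mu_a(\nu)\,d\lambda$, with constants independent of $\mu_a$; this is exactly the content of the paper's proof (integration by parts together with the pointwise bound $\nu^{\eps}e^{-e^{-1/\eps}\nu}\le e$ for one direction, and the Laplace representation $\nu^{-1-\eps}=\Gamma(1+\eps)^{-1}\int_0^\infty t^{\eps}e^{-t\nu}\,dt$ with a three-way splitting of the integral for the other), and your proposal does not supply it.
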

\begin{proof}
 Fix any $a\in\cn$ with $\|a\|_\zeta<\infty$. We
need to show that the associated function $g(a^*,a;.)$ has bounded Cesaro
mean.
\begin{align*}
\big(Mg(a^*,a;.)\big)(x)&=\frac1{\log x}\int_1^x\,g(a^*,a;\lambda)\,
\frac{d\lambda}\lambda
=\frac1{\log x}\int_1^x\,\frac1\lambda
\tau\big(a^*e^{-\lambda^{-1}G^{-1}}a\big)\,
\frac{d\lambda}\lambda\\
&=\frac1{\log x}\int_{x^{-1}}^1\,
\tau\big(a^*e^{-tG^{-1}}a\big)\,dt
=\frac1{\log x}\tau\big(a^*G
e^{-x^{-1}G^{-1}}a\big)
-\frac1{\log x}\tau\big(a^*
Ge^{-G^{-1}}a\big)\\
&\leq\frac1{\log x}\tau\big(a^*G
e^{-x^{-1}G^{-1}}a\big).
\end{align*}
The inversion of the trace and the integral can be justified by
Fubini's Theorem. Indeed,  the operator-valued function
$
t\mapsto \chi_{[x^{-1},1]}(t)\,a^*e^{-tG^{-1}}a\,,
$ belongs to $C([x^{-1},1],\L^1(\cn,\tau))$ and thus to 
$L^1\cap L^\infty([x^{-1},1],\L^1(\cn,\tau))$. Using the embedding of
$C([x^{-1},1],\cn)$ into the von Neumann algebra $L^\infty([x^{-1},1],dt)\otimes\cn$,
we see that it defines  an integrable element of 
$L^\infty([0,1],dt)\otimes\cn$ for the faithful semifinite trace
$\int\otimes\tau$.
Now, making the change of variable $x=e^{1/\eps}$ (i.e. $\eps=s-1$)
in the previous expression, we obtain, for $0<\eps<1$,
\begin{align*}
\big|\big(Mg(a^*,a;.)\big)(e^{1/\eps})\big|
\leq
\eps\tau\big(a^*G
e^{-e^{-1/\eps}G^{-1}}a\big)
\leq
\eps\tau\big(a^*G^{1+\eps}a\big)
\big\|G^{-\eps} e^{-e^{-1/\eps}G^{-1}}\big\|\,,
\end{align*}
where we have used the operator inequality
$$
a^*Ge^{-e^{-1/\eps}G^{-1}}a\leq
\big\|G^{-\eps} e^{-e^{-1/\eps}G^{-1}}\big\|\,
a^*G^{1+\eps}a\,.
$$
But the function $x\mapsto x^\eps e^{-e^{-1/\eps}x}$ reaches its
maximum at $x=\eps e^{1/\eps}$, where its value is $e
\eps^\eps e^{-\eps}\leq e$. Thus,
\begin{align*}
\big|\big(Mg(a^*,a;.)\big)(e^{1/\eps})\big|
\leq \,e\,\eps\,\tau\big(a^*G^{1+\eps}a\big)=e\,\eps\,\zeta(a^*,a;1+\eps)\,,
\end{align*}
which concludes the proof of the first inclusion, since the right
hand side is bounded by assumption.

To show that $\|.\|_\zeta\leq C\|.\|_{\HK}$, we will use the Laplace transform.
Fix any $a\in \cn$ with finite  ${\HK}$-norm. We have, writing again $\eps=s-1$,
$$
G^{1+\eps}=\frac1{\Gamma(1+\eps)}\int_0^\infty t^\eps
\,  e^{-tG^{-1}} \, dt.
$$
Disregarding the bounded pre-factor $\Gamma(1+\eps)^{-1}$ in the
expression above, 
we first decompose the integral into two pieces:
$\int_0^\infty=\int_0^1+\int_1^\infty$, to write
$a^*G^{1+\eps}a$ as a sum of two operators.
For the second term, we obtain
\begin{align*}
\tau\Big(\int_1^\infty
t^\eps\,a^*  e^{-tG^{-1}} a\,dt\Big)
&\leq
\int_1^\infty
t^\eps\,  \tau\big(a^* \,e^{-G^{-1}}a\big) \,
\big\|e^{-(t-1)G^{-1}}\big\|\,dt\\
&\hspace{5cm}=
 g(a^*,a;1) \,
\int_1^\infty
t^\eps\,e^{-(t-1)\|G\|^{-1}}\,dt\leq C_1,
\end{align*}
where the constant $C_1$ is independent of $\eps$.
For the first term, we can exchange the trace and the integral because of  the
finite range of the integration. This reads,
\begin{align*}
\tau\Big(\int_0^1
t^\eps\,  a^*e^{-tG^{-1}}a \,dt\Big)&=
\int_0^1
t^\eps\,  \tau\big(a^*e^{-tG}a\big) \,dt
=\int_1^\infty
\lambda^{-\eps}\,g(a^*,a;\lambda)\,\frac{d\lambda}{\lambda}.
\end{align*}
Now, let us again decompose the integral as
$\int_1^\infty=\int_1^{e^{\eps^{-1}}}+\int_{e^{\eps^{-1}}}^\infty$.
For the first part, we can conclude that
\begin{align*}
\int_1^{e^{\eps^{-1}}}\!\!\!
\lambda^{-\eps}g(a^*,a;\lambda)\,\frac{d\lambda}\lambda&\leq
\int_1^{e^{\eps^{-1}}}\!\!\!
g(a^*,a;\lambda)\,\frac{d\lambda}{\lambda}=\,\frac1\eps\big(Mg(a^*,a;.)\big)
(e^{\eps^{-1}})\leq\frac{\|Mg(a^*,a;.)\|_\infty}{\eps}=:\frac{C_2}\eps.
\end{align*}
The last term is
\begin{align}
\label{LAST}
\int_{e^{\eps^{-1}}}^\infty
\lambda^{-\eps}\,g(a^*,a;\lambda)\,\frac{d\lambda}{\lambda}.
\end{align}
Now, we are going to make use of the following
change of variable:
$0\leq y(\lambda):=\int_1^\lambda g(a^*,a;\sigma)\,
\frac{d\sigma}{\sigma}$,
 a monotonically increasing function of $\lambda$. Observing that
$y(\lambda)=\big(Mg(a^*,a;.)\big)(\lambda)\log(\lambda)$, there exists a 
positive constant $C_3$, such that 
$y(\lambda)\leq C_3\log(\lambda)$, and thus $\lambda^{-\eps}\leq e^{-\eps C_3^{-1}y}$. 
This implies that the expression \eqref{LAST} is smaller than
$$
\int_0^\infty
e^{-\eps C_3^{-1}y}\,dy=\eps^{-1}\int_0^\infty
e^{-C_3^{-1}y}\,dy=:\frac{C_3}{\eps}.
$$
Gathering these estimates together  proves that 
$$
\eps\,\tau\big(a^*G^{1+\eps}a\big)\leq \eps\Big(C_1+\frac{C_2}\eps+\frac{C_3}\eps\Big),
$$
and thus the set 
$\big\{(s-1)\zeta\big(a^*,a;s\big):1\leq s\leq2\big\}$ is
bounded and so $\|a\|_\zeta<\infty$. This concludes the proof of the
proposition.
\end{proof}

Applying this result to the unital case, i.e. when $G$ alone belongs to $\Z_1$, and 
combining it with Theorem \ref{great}, we get an interesting fact.
\begin{corollary} The following three norms on $\Z_1$ are equivalent.
$$
\|T\|_{1,\infty}=\sup_{t>0}\frac1{\log(1+t)}\int_0^t\mu_s(T)ds,\quad 
\sup_{s>1} (s-1)\tau(|T|^s),\quad
\sup_{\lambda>0}\frac1{\log(\lambda)}\int_1^\lambda \mu^{-2}
\tau\big(e^{-\mu^{-1}|T|^{-1}}\big)d\mu.
$$
\end{corollary}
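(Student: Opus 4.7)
The plan is to deduce the corollary by specializing Theorem \ref{NE} to the pair $G := T$ and $a := I \in \cn$, and combining with Theorem \ref{great}. Each of the three expressions depends on $T$ only through $|T|$, so without loss of generality we may assume $T \geq 0$; passing to the corner $p_T\cn p_T$ with $p_T$ the support projection of $T$, we may further assume $T$ is injective, which makes $G := T$ a valid choice in the framework of Section \ref{sec:unital-case}.

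With $a = I$, the two bilinear forms of Section 4 collapse to traces of functions of $T$ alone:
$$
\zeta(I^*,I;s) = \tau(|T|^s), \qquad g(I^*,I;\lambda) = \lambda^{-1}\tau\bigl(e^{-\lambda^{-1}|T|^{-1}}\bigr).
$$
Hence $\|I\|_\zeta^2 = \sup_{1 \leq s \leq 2}(s-1)\tau(|T|^s)$, matching the second expression of the corollary on its natural range, and
$$
\|I\|_{\HK}^2 = \sup_{x > 1}\frac{1}{\log x}\int_1^x \lambda^{-2}\tau\bigl(e^{-\lambda^{-1}|T|^{-1}}\bigr)\,d\lambda,
$$
which is exactly the third expression. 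Theorem \ref{NE} then delivers the equivalence of the second and third expressions directly. To incorporate the first expression, invoke Theorem \ref{great}: $\|T\|_{1,\infty}$ is equivalent to $\sup_{p > 1}(p-1)\|T\|_p$, and one rewrites $\|T\|_p = \tau(|T|^p)^{1/p}$.

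The main technical point is reconciling the various sup ranges ($1 \leq s \leq 2$ in the definition of $\|\cdot\|_\zeta$ versus $s > 1$ in the corollary) and the exponents ($\|T\|_p$ versus $\tau(|T|^p) = \|T\|_p^p$). Both discrepancies are handled by elementary bounds that exploit the fact that the essential content of each expression lives near $p = 1^+$, where both $p$ and $1/p$ are close to one: concretely, the factor $\|T\|_p^{(p-1)/p}$ tends to $1$ whenever $(p-1)\|T\|_p$ is bounded, so the power discrepancy disappears in the supremum; and since $\Z_1 \subseteq \L^2$ (a consequence of the pointwise estimate $\mu_t(T) \leq C\log(1+t)/t$ valid on $M_{1,\infty}$), the tails $s$ bounded away from $1$ are controlled by $\|T\|$ together with $\tau(|T|^2)$. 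I expect this bookkeeping---rather than either of the appeals to Theorems \ref{NE} and \ref{great}---to be the only real step requiring care.
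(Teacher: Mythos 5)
Your proposal is precisely the paper's (one-line) proof: specialize Theorem \ref{NE} to the unital case $G=|T|$, $a=1$, and combine with Theorem \ref{great} — the paper supplies no further detail. The bookkeeping you single out (reconciling $\tau(|T|^s)=\|T\|_s^s$ with $\|T\|_s$, and the supremum ranges) is indeed the only substantive content, and your treatment of it is sound; if anything it correctly exposes that the corollary's literal $\sup_{s>1}$ must be read as a supremum over $s$ near $1$ (say $1<s\le 2$), since the unrestricted supremum of $(s-1)\tau(|T|^s)$ diverges for any $T$ with $\|T\|\ge 1$.
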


A positive element $\omega\in L^\infty(\mathbb R)^*_+$, is called a Banach limit, 
if it is translation invariant and satisfies 
${\rm ess}-\liminf _{t\to \infty}(f)\leq\omega(f)\leq {\rm ess}-\limsup _{t\to \infty}(f)$, 
for all $0\leq f\in  L^\infty(\mathbb R)$.  In particular, we see that any Banach 
limit must vanish on $C_0(\mathbb R)$.
Given such a Banach limit $\omega$, we define the dilation invariant
functional $\tilde\omega$ on 
$\R^*_+$, by $\tilde\omega:=\omega\circ\log$. 
We use the notation $\omega-\lim_{t\to\infty} f(t)$ instead 
of $\omega(f)$. Next we show that the 
$\tilde\omega$-residue of the zeta function $\zeta(a^*,a;.)$ 
coincides  with the $\omega$-limit  of the C\'esaro mean 
of the heat-trace function $g(a^*,a;.)$.

\begin{theorem}
\label{OL}
Let $a\in B_\zeta$, then for any Banach limit  $\omega$, we have
$$
\omega-\lim_{\lambda\to\infty} M\big(g(a^*,a;.)\big)(\lambda)
=\tilde\omega-\lim_{r\to\infty}\frac1r\,\zeta\big(a^*,a;1+\tfrac1r\big).
$$
If moreover $\omega$ is $M$-invariant and  $g(a,a;.)$ is bounded, then
$$
\omega-\lim_{\lambda\to\infty} g(a^*,a;\lambda)=\tilde\omega-\lim_{r\to\infty}\frac1r\,\zeta\big(a^*,a;1+\tfrac1r\big).
$$
In both  these situations, if one of the ordinary limits exists, both limits exist, 
and they coincide.
\end{theorem}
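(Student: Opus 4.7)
The plan is to derive a single integral identity relating $\frac{1}{r}\zeta(a^*,a;1+\tfrac{1}{r})$ to a weighted average of $Mg(a^*,a;\cdot)$, and then reduce the claimed equality of Banach limits to a Fubini plus dilation-invariance argument.

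Starting from the Mellin--Laplace representation $G^{1+\varepsilon} = \Gamma(1+\varepsilon)^{-1}\int_0^\infty t^\varepsilon e^{-tG^{-1}}\,dt$, and using the substitution $t = 1/\lambda$, I would first write
\begin{equation*}
\varepsilon\,\zeta(a^*,a;1+\varepsilon) \;=\; \frac{\varepsilon}{\Gamma(1+\varepsilon)}\int_0^\infty \lambda^{-\varepsilon}\,g(a^*,a;\lambda)\,\frac{d\lambda}{\lambda}\,.
\end{equation*}
The contribution from $\int_0^1$ is $O(\varepsilon)$, since \eqref{TRE} applied at any $s>1$ yields $g(a^*,a;\lambda)\lesssim_s \lambda^{s-1}$ for small $\lambda$. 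For the $[1,\infty)$ part, I would integrate by parts with $U(\lambda):=\int_1^\lambda g(a^*,a;\sigma)\,d\sigma/\sigma = Mg(a^*,a;\cdot)(\lambda)\log\lambda$; the boundary terms vanish because $a\in B_\zeta = B_{\HK}$ (Theorem~\ref{NE}) makes $Mg(a^*,a;\cdot)$ bounded. Substituting $u=\log\lambda$ and then $v=\varepsilon u$, and writing $r=1/\varepsilon$, one arrives at the master identity
\begin{equation*}
\frac{1}{r}\zeta\bigl(a^*,a;1+\tfrac{1}{r}\bigr) \;=\; \frac{1}{\Gamma(1+\tfrac{1}{r})}\int_0^\infty v\,e^{-v}\,Mg(a^*,a;\cdot)(e^{rv})\,dv \;+\; o(1),\quad r\to\infty.
\end{equation*}

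Now I would apply the dilation-invariant functional $\tilde\omega$ to both sides. The $o(1)$ term and the factor $\Gamma(1+\tfrac{1}{r})^{-1}-1$ lie in $C_0$ and so contribute nothing. The decisive step is the exchange of $\tilde\omega$ with the $v$-integral: since $Mg(a^*,a;\cdot)$ is bounded, the integrand $r\mapsto v\,e^{-v}\,Mg(a^*,a;\cdot)(e^{rv})$ is uniformly dominated by $Cv\,e^{-v}\in L^1(0,\infty)$, and a Fubini-type argument (approximating $\tilde\omega$ by convex combinations of translates, exchanging with the Riemann sums, and passing to the limit by dominated convergence) justifies the interchange. For fixed $v>0$, dilation invariance of $\tilde\omega$ in its argument gives $\tilde\omega(r\mapsto Mg(a^*,a;\cdot)(e^{rv})) = \tilde\omega(r\mapsto Mg(a^*,a;\cdot)(e^{r}))$, because the reparametrisation $r\mapsto rv$ is a dilation. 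Combined with $\int_0^\infty v\,e^{-v}\,dv = 1$ and the identification of $\omega-\lim_{\lambda\to\infty}Mg(a^*,a;\cdot)(\lambda)$ with $\tilde\omega$ evaluated on $\lambda\mapsto Mg(a^*,a;\cdot)(\lambda)$ through $\lambda=e^r$, this produces the first equality of the theorem.

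The second equality is immediate from the first: $M$-invariance of $\omega$ means $\omega\circ M = \omega$ on bounded functions, so when $g(a^*,a;\cdot)$ is bounded we may replace $Mg(a^*,a;\cdot)$ by $g(a^*,a;\cdot)$ everywhere. For the ordinary-limit statement, if $\lim_{\lambda\to\infty}Mg(a^*,a;\cdot)(\lambda) = L$ exists then $Mg(a^*,a;\cdot)(e^{rv})\to L$ for each $v>0$, and dominated convergence in the master identity gives $\lim_{r\to\infty}\frac{1}{r}\zeta(a^*,a;1+\tfrac{1}{r}) = L$. The converse uses $g(a^*,a;\cdot)\geq 0$: the classical Karamata Tauberian theorem then inverts the Laplace-style transform $\int_0^\infty v\,e^{-v}Mg(\cdot)(e^{rv})\,dv$ to recover the ordinary limit of $Mg$. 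The principal obstacle I anticipate is the rigorous justification of the Fubini interchange for a Banach limit against the probability measure $v e^{-v}\,dv$; this step is the technical heart of the argument and requires the uniform $L^1$-domination together with an approximation of $\tilde\omega$ by a $w^*$-convergent sequence of nice (e.g.\ normalised averaging) functionals, which is possible since $\tilde\omega$ lies in the $w^*$-closure of the dilation-averaging operators on $L^\infty(\mathbb{R}^*_+)$.
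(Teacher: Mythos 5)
Your route is genuinely different from the paper's, and your master identity is correct: integrating by parts against $U(\lambda)=Mg(a^*,a;\cdot)(\lambda)\log\lambda$ and rescaling does turn $\tfrac1r\zeta(a^*,a;1+\tfrac1r)$ into $\int_0^\infty v e^{-v}\,Mg(a^*,a;\cdot)(e^{rv})\,dv+o(1)$ (the interchange of $\tau$ with the $\lambda$-integral over the infinite range is harmless here by positivity and normality of $\tau$). The paper reaches the same object by substituting $t=e^{-\mu}$ to obtain a Laplace--Stieltjes transform $\tfrac1r\int_0^\infty e^{-\mu/r}\,d\beta(\mu)$ with $\beta$ increasing and $\beta(\mu)/\mu=Mg(a^*,a;\cdot)(e^\mu)$, and then simply \emph{cites} the weak-$*$ Karamata theorem \cite[Theorem 2.2]{CPS2}; one integration by parts shows that your right-hand side \emph{is} that transform, so what you are really doing is re-proving the weak-$*$ Karamata theorem in this special case rather than invoking it. Your treatment of the ordinary-limit statements (dominated convergence one way, the classical Karamata Tauberian theorem with $g\geq0$ the other) and of the $M$-invariant case is fine, and indeed more explicit than the paper's appeal to ``general properties of Banach limits''.

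The gap is exactly where you flag it: the interchange of $\tilde\omega$ with the $v$-integral. The justification you propose cannot be made to work in that form. There is no $w^*$-\emph{convergent sequence} of normal (``nice averaging'') functionals tending to $\tilde\omega$: $L^\infty$ is a Grothendieck space, so a $w^*$-convergent sequence of elements of $L^1\subset (L^\infty)^*$ has its limit in $L^1$, whereas $\tilde\omega$ is singular; one only has nets, and the dominated-convergence step you rely on fails for nets. Likewise, the Riemann sums of $v\mapsto ve^{-v}Mg(a^*,a;\cdot)(e^{rv})$ do not converge uniformly in $r$ (the integrand oscillates on ever finer $v$-scales as $r\to\infty$), and a Banach limit is continuous only for uniform convergence. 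The interchange is nevertheless \emph{true}, but it needs a genuine argument: either recognise the right-hand side as $\tfrac1r\int_0^\infty e^{-\mu/r}d\beta(\mu)$ with $\beta$ increasing and quote \cite[Theorem 2.2]{CPS2}, as the paper does; or note that in logarithmic coordinates your averaging operator is convolution with the probability density $K(w)=e^{2w}\exp(-e^{w})$, which has finite first moment, so that $\sup_x\bigl|\tfrac1T\int_x^{x+T}(K*H-H)(s)\,ds\bigr|\leq 2\|H\|_\infty T^{-1}\int|w|K(w)\,dw\to0$, i.e.\ $K*H-H$ is almost convergent to $0$ and is therefore annihilated by every translation-invariant Banach limit (Lorentz's characterisation, in the form used in \cite{LSS}). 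The finite first moment is the real reason the step is legitimate; a purely formal ``Fubini plus invariance'' argument proves too much and must be treated with suspicion.
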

\begin{proof}
We only prove the first part, the remaining statements  will then follow 
immediately from general properties of Banach limits.

In the course of  the proof of Theorem \ref{NE}, we have shown that
\begin{align*}
\Big\|a^*G^{1+\frac1r}a-\frac1{\Gamma(1+\frac1r)}\int_0^1t^{\frac1r}
a^*e^{-tG^{-1}}a\,dt\Big\|_1&\leq g(a^*,a;1)\int_1^\infty t^{\frac1r}e^{-(t-1)\|G\|^{-1}}dt.
\end{align*}
Hence
$$
\lim_{r\to\infty}\frac1r\Big[\tau\big(a^*G^{1+\frac1r}a\big)-\frac1{\Gamma(1+\frac1r)}\int_0^1t^{\frac1r}
\tau\big(a^*e^{-tG^{-1}}a\big)\,dt\Big]=0,
$$
thus setting $\tilde\omega=\omega\circ\log$, we have:
\begin{align*}
\tilde\omega-\lim_{r\to\infty}\frac1r \tau\big(a^*G^{1+\frac1r}a\big)= 
\tilde\omega-\lim_{r\to\infty}\frac1r
\frac1{\Gamma(1+\frac1r)}\int_0^1t^{\frac1r}
\tau\big(a^*e^{-tG^{-1}}a\big)\,dt.
\end{align*}

Substituting $t=e^{-\mu}$, a little computation shows that
\begin{equation}
\label{WK}
\int_0^1t^{\frac1r}\tau\big(a^*e^{-tG^{-1}}a\big)\,dt
=\int_0^\infty e^{-\frac \mu r} \,d\beta(\mu)\,\quad \mbox{where}\quad \beta(\mu)
=\int_0^\mu e^{-\nu}\tau\big(a^*e^{-e^{-\nu}G^{-1}}a\big)\,d\nu.
\end{equation}
We are now wish to use the weak-$*$ Karamata Theorem \cite[Theorem 2.2]{CPS2}, and
need to check the various hypotheses. 
First, $\omega$ is a translation invariant mean on $\mathbb R$, so $\tilde\omega$ 
is a dilation invariant mean on $\mathbb R_+^*$. 
Next, $\beta$ is positive, increasing and  continuous on $\R^+$, and satisfies 
$\beta(0)=0$. Finally, we need to check that $\int_0^\infty e^{-\frac \mu r}d\beta(\mu)$ is 
finite for any $r>0$.
But this follows immediately from the first equality in Equation \eqref{WK}. Hence, the 
weak-$*$ Karamata Theorem gives us
$$
\tilde\omega-\lim_{r\to\infty}\frac1r\int_0^\infty e^{-\frac \mu r} \,d\beta(\mu)
=\tilde\omega-\lim_{\mu\to\infty}\frac{\beta(\mu)}\mu.
$$
But 
$$
\frac{\beta(\mu)}\mu
=\frac1\mu\int_1^{e^\mu} \lambda^{-2}\tau\big(a^*e^{-\lambda^{-1}G^{-1}}a\big)\,d\lambda
=\big(M g(a^*,a;.)\big)(e^\mu)\,,
$$
from which the result follows, since
$$
\tilde\omega-\lim_{r\to\infty}\frac1r\tau\big(aG^{1+\frac1r}a\big)=
\tilde\omega-\lim_{\mu\to\infty}\big(M g(a,a;.)\big)\circ\exp(\mu)
=\omega-\lim_{\mu\to\infty}\big(M g(a,a;\mu)\big).
\hspace{1cm}\qed
$$
\hideqed
\end{proof}

 To conclude this discussion, we give  some obvious but useful stability properties of $B_\zeta$.

\begin{lemma}
\label{Bp-grew}
i) If $a\in B_\zeta$ and $f\in L^\infty(\R)$, then $af(G)\in B_\zeta$, and 
when $a^*=a$, $af(a)\in B_\zeta$.\\
ii) If $0\leq a,b\in\cn$ are such that $a \in B_\zeta$ and $b^2\leq a^2$, then $b\in B_\zeta$.\\
iii) If $a\in B_\zeta$, then $|a|,|a^*|\in B_\zeta$.
\end{lemma}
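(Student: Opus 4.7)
My plan is to establish the three parts by direct trace-cyclicity arguments. I treat them in the order (iii), (ii), (i), since (ii) and (iii) are straightforward and (i) is the one whose self-adjoint sub-claim requires a little care.

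For (iii), both $|a|$ and $|a^*|$ are self-adjoint, so the norms $\|\cdot\|_\zeta$ and $\|(\cdot)^*\|_\zeta$ coincide on them and I only need to estimate one. Using $|a|^2 = a^*a$ together with cyclicity of $\tau$, I will compute
$$\tau(|a|\,G^s\,|a|)=\tau(|a|^2 G^s)=\tau(a^*a\,G^s)=\tau(a\,G^s a^*).$$
Multiplying by $(s-1)$ and taking the supremum over $s\in[1,2]$ gives $\||a|\|_\zeta = \|a^*\|_\zeta$, and the identical calculation with $|a^*|^2=aa^*$ gives $\||a^*|\|_\zeta=\|a\|_\zeta$, from which both belong to $B_\zeta$.

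For (ii), the hypothesis $b^2\le a^2$ sandwiched by the positive operator $G^{s/2}$ yields $G^{s/2}b^2 G^{s/2}\le G^{s/2}a^2 G^{s/2}$. Taking traces and using cyclicity,
$$\tau(bG^s b)=\tau(b^2 G^s)\le\tau(a^2 G^s)=\tau(aG^s a),$$
so that $\|b\|_\zeta\le\|a\|_\zeta<\infty$. Self-adjointness of $b$ (and positivity, ensuring $\|b^*\|_\zeta=\|b\|_\zeta$) then closes the argument.

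For (i), the bound for $af(G)$ relies on the fact that $f(G)$ commutes with $G^s$, both being in the functional calculus of $G$. Cyclicity of $\tau$ gives $\tau(f(G)^* a^*G^s a f(G))=\tau(|f(G)|^2 a^*G^s a)$, and since $|f(G)|^2\le\|f\|_\infty^2$ with $a^*G^s a\ge 0$, the sandwich argument of (ii) yields $\|af(G)\|_\zeta\le\|f\|_\infty\|a\|_\zeta$; the adjoint estimate is symmetric, and the norm bound $\|af(G)\|\le\|f\|_\infty\|a\|$ is trivial. For the self-adjoint case, $a$ and $f(a)$ commute by functional calculus, hence $a|f(a)|^2 a\le\|f\|_\infty^2 a^2$; using $(af(a))^*=\overline f(a)a=a\overline f(a)$ and cyclicity I reduce $\tau((af(a))^* G^s af(a))$ to $\tau(a|f(a)|^2 a G^s)$ and then bound it by $\|f\|_\infty^2\tau(a^2 G^s)=\|f\|_\infty^2\tau(aG^s a)$. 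The mildly subtle point I expect is that one cannot short-circuit the self-adjoint sub-claim by combining (ii) and (iii): those only give $|af(a)|\in B_\zeta$, not $af(a)$ itself, so the direct computation just described is unavoidable.
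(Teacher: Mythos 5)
Your proof is correct and follows essentially the same route as the paper's: operator inequalities conjugated by $G^{s/2}$ together with the trace property $\tau(x^*x)=\tau(xx^*)$, applied part by part. The only cosmetic difference is in (iii), where the paper uses the polar decomposition $|a|=u^*a$ to obtain the inequality $\tau(|a|G^s|a|)\leq\tau(aG^sa^*)$, whereas you get the exact equality $\||a|\|_\zeta=\|a^*\|_\zeta$ directly from cyclicity; both are valid.
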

\begin{proof}
From the ideal property  of the trace-norm we obtain
$$
\tau\big(\bar f(G) a^*G^sa f(G)\big)\leq \|f\|_\infty^2 
\tau\big(a^*G^sa\big)\,,
$$
and from the operator inequality
$$
a f(G) G^s\bar f(G)a^*\leq \|f\|_\infty^2 aG^sa^*\,,
$$
we obtain the first part of $i)$. The second part of $i)$ is even more immediate. 

To prove $ii)$, note that from \eqref{eq:sym}, that is from the trace property, we have
\begin{align*}
\tau\big(bG^sb\big)&=\tau\big(G^{s/2}b^2G^{s/2}\big)\leq
\tau\big(G^{s/2}a^2G^{s/2}\big)=\tau\big(aG^sa\big)\,.
\end{align*}

Finally, to obtain $iii)$, let $u|a|$ and $v|a^*|$ be the polar decomposition of $a$ and $a^*$. Then of course, $|a|=u^*a=a^*u$ and $|a^*|=v^*a^*=av$. Thus,
$$
\tau\big(|a| G^s|a|\big)=\tau\big(u^*a G^sa^*u\big)\leq \tau\big(a G^sa^*\big)\,.
$$
The proof for $|a^*|$ is entirely similar.
\end{proof}

\subsection{Relations between $B_\zeta(G)$, $\Z_p$ and  $\L^{p,\infty}$}

\begin{prop}
\label{thm:Z_1-landing}
If $a,b\in B_\zeta(G)$, then
$bGa\in\cz_1$. Moreover, when $b=a^*$, we have the following partial-trace estimate:
\begin{align*}
\sigma_t\big(a^*Ga\big)&:=\int_0^t \mu_s\big(a^*Ga\big)\,ds
\leq \frac1e\,\big\|a^*\,e^{-G^{-1}}\,a\big\|_1+\|a\|^2+\|Mg(a^*,a;.)\|_\infty \log(1+t)\,.
\end{align*}
\end{prop}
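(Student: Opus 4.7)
My plan is first to dispose of the general case by polarization, then to prove the symmetric partial-trace estimate via a $t$-dependent heat-kernel decomposition combined with the K-functional representation of $\sigma_t$.

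The polarization step: since $B_\zeta$ is a $*$-algebra by Proposition~\ref{B}, each of the four elements $a+\epsilon b^*$ with $\epsilon^4=1$ lies in $B_\zeta$. A direct computation using $\sum_\epsilon \epsilon=\sum_\epsilon \epsilon^2=0$ gives the identity
\begin{equation*}
4\,bGa=\sum_{\epsilon^4=1}\epsilon\,(a+\epsilon b^*)^*\,G\,(a+\epsilon b^*),
\end{equation*}
which writes $bGa$ as a linear combination of positive operators of the form $c^*Gc$ with $c\in B_\zeta$. Since $\cz_1=M_{1,\infty}$ is a linear subspace of $\cn$, it suffices to establish the partial-trace estimate in the symmetric case, from which $c^*Gc\in\cz_1$ is obtained by dividing both sides by $\log(1+t)$ and taking the supremum over $t$.

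For the symmetric estimate I invoke the K-functional representation $\sigma_t(T)\le\|T_1\|_1+t\|T_2\|$ valid for any decomposition $T=T_1+T_2$, and use the heat-kernel splitting with parameter $\mu:=(1+t)^{-1}\in(0,1]$:
\begin{equation*}
a^*Ga=a^*G(1-e^{-\mu G^{-1}})a+a^*Ge^{-\mu G^{-1}}a.
\end{equation*}
For the bounded part, the elementary scalar inequality $x(1-e^{-\mu/x})\le\mu$ (valid for all $x>0$) gives $\|a^*G(1-e^{-\mu G^{-1}})a\|\le\mu\|a\|^2$, whence the $t$-weighted contribution is at most $t\mu\|a\|^2=\frac{t}{1+t}\|a\|^2\le\|a\|^2$. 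For the trace-class part, the functional-calculus identity $Ge^{-\mu G^{-1}}=\int_\mu^\infty e^{-sG^{-1}}\,ds$ together with Fubini yields
\begin{equation*}
\tau(a^*Ge^{-\mu G^{-1}}a)=\int_\mu^1\tau(a^*e^{-sG^{-1}}a)\,ds+\int_1^\infty\tau(a^*e^{-sG^{-1}}a)\,ds.
\end{equation*}
The first integral, after the substitution $s=1/u$, becomes $\int_1^{1/\mu}g(a^*,a;u)\frac{du}{u}=\log(1/\mu)\,(Mg(a^*,a;\cdot))(1/\mu)\le \|Mg(a^*,a;\cdot)\|_\infty\log(1+t)$ by the very definition of the Ces\`aro mean $M$. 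For the tail I will use the cyclicity of $\tau$ to identify $\int_1^\infty\tau(a^*e^{-sG^{-1}}a)\,ds=\tau(a^*Ge^{-G^{-1}}a)$, and then exploit the sharp scalar bound $xe^{-1/x}\le 1/e$ on $(0,1]$ (which uses only $\|G\|\le 1$) to obtain $\tau(a^*Ge^{-G^{-1}}a)\le\frac1e\|a^*e^{-G^{-1}}a\|_1$. Combining the three contributions yields the claimed estimate.

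The main obstacle I anticipate is extracting the sharp factor $\frac1e$ in the tail bound: the naive domination $\tau(a^*e^{-sG^{-1}}a)\le e^{-(s-1)/\|G\|}\tau(a^*e^{-G^{-1}}a)$ integrated over $s\in[1,\infty)$ produces only the factor $\|G\|\le 1$, not $1/e$. The sharp constant will instead require a more delicate spectral argument that plays the operator-norm bound $\|Ge^{-G^{-1}}\|\le 1/e$ off against the positivity of the operator $a^*e^{-G^{-1}}a$, rather than using the pointwise operator inequality $Ge^{-G^{-1}}\le\|G\|e^{-G^{-1}}$ directly.
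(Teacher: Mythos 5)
Your argument is essentially the paper's. The decomposition $a^*Ga=\bigl(\int_0^\mu+\int_\mu^1+\int_1^\infty\bigr)a^*e^{-sG^{-1}}a\,ds$ with $\mu=(1+t)^{-1}$ is exactly the paper's splitting at $e^{-k}$ with $k=\log(1+t)$: your closed form $a^*G(1-e^{-\mu G^{-1}})a$ for the first piece and the bound $\mu\|a\|^2$ are the paper's $C_k$ and $\|C_k\|\le\|a\|^2e^{-k}$, and the $K$-functional step and the Ces\`aro identification of the middle piece are identical. Two genuine but minor departures: you reduce $bGa$ to the symmetric case by polarization, where the paper uses the Cauchy--Schwarz inequality $\|bGa\|_{1,\infty}\le\|bGb^*\|_{1,\infty}^{1/2}\|a^*Ga\|_{1,\infty}^{1/2}$ of \cite[Lemma 2.3]{ChS}; yours is more elementary, the paper's gives a sharper norm bound. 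And you deduce $a^*Ga\in\Z_1$ from the partial-trace estimate, whereas the paper gets it in one line from the operator inequality $(a^*Ga)^s\le\|a\|^{2(s-1)}a^*G^sa$ of \cite[Lemma 3.3]{CPS2} together with the zeta characterisation of $\Z_1$. If you go your route, note that near $t=0$ you must control $\sigma_t(a^*Ga)/\log(1+t)$ by $\|a^*Ga\|$, since the $t$-independent terms in your bound are not $o(\log(1+t))$ there.

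On the $\tfrac1e$: your instinct is right, but no ``more delicate spectral argument'' will rescue it, because the inequality $\tau(a^*Ge^{-G^{-1}}a)\le\tfrac1e\|a^*e^{-G^{-1}}a\|_1$ is simply false. Take $\cn=\C$ with the standard trace, $G=1$, $a=1$: the left side is $e^{-1}$ and the right side is $e^{-2}$ (a rank-one $a$ against $G=\mathrm{diag}(1,\tfrac12,\tfrac14,\dots)$ gives the same failure with $G$ compact). The temptation is to play the operator-norm bound $\|Ge^{-G^{-1}}\|\le e^{-1}$ against $\|a^*e^{-G^{-1}}a\|_1$, but what is needed is the operator inequality $Ge^{-G^{-1}}\le e^{-1}e^{-G^{-1}}$, which would require $G\le e^{-1}$. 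The paper's own proof asserts the $\tfrac1e$ bound by appeal to \eqref{TRE} and is in error at this point; the correct and entirely sufficient estimate is $a^*Ge^{-G^{-1}}a=(e^{-G^{-1}/2}a)^*G(e^{-G^{-1}/2}a)\le\|G\|\,a^*e^{-G^{-1}}a$, giving constant $\|G\|\le1$. Since this term only contributes a $t$-independent constant, nothing downstream is affected; simply state and prove the estimate with $1$ in place of $\tfrac1e$.
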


\begin{proof}
By \cite[Lemma 2.3]{ChS}  we have the  inequality:
$$
\|bGa\|_{1,\infty}\leq\||bG^{1/2}|^2\|_{1,\infty}^{1/2}\||G^{1/2}a|^2\|_{1,\infty}^{1/2}=
\||G^{1/2}b^*|^2\|_{1,\infty}^{1/2}\||G^{1/2}a|^2\|_{1,\infty}^{1/2}=
\|bGb^*\|_{1,\infty}^{1/2}\|a^*Ga\|_{1,\infty}^{1/2},
$$
so we may assume without loss of generality 
that $b=a^*$.

Recall from \cite[Lemma 3.3]{CPS2} that 
if $ a\in\cn$ has norm $||a||\leq M$, then for any $1\leq s <2$,
$$
(a^*Ga)^s\leq M^{2(s-1)}a^*G^sa.
$$
Using this inequality we have
\begin{align*}
 \limsup_{s\to 1}(s-1)\tau((a^*Ga)^s)\leq \limsup_{s\to 1}\Vert a\Vert^{2(1-s)}(s-1)
\tau(a^*G^sa)<\infty\,,
\end{align*}
which shows that $a\in B_\zeta\Rightarrow  a^*Ga\in\Z_1$.

To obtain the estimate of the partial trace, we use the Laplace transform to write
\begin{align*}
a^*Ga
=\int_0^\infty
a^*\,e^{-tG^{-1}}\,a\,dt
&=\Big(\int_0^{e^{-k}}+\int_{e^{-k}}^1+\int_1^\infty
\Big)
a^*\,e^{-tG^{-1}}\,a\,dt
=:C_k+B_k+A.
\end{align*}
By \eqref{TRE},  we see that $A$ is trace-class with
$\|A\|_1\leq e^{-1}\big\|a^*\,e^{-G^{-1}}\,a\big\|_1$
and we focus on the rest. For $C_k$, we
have the bound
$
\|C_k\|\leq\|a\|^2\,e^{-k}.
$
The operator $B_k$ can be  bounded in trace-norm:
\begin{align*}
\|B_k\|_1\leq\int_{e^{-k}}^1
\,\tau \big(a^*\,e^{-tG^{-1}}\,a\big)\,dt
=\int_1^{e^k}g(a^*,a;\lambda)\,
\frac{d\lambda}\lambda&=\ln(e^k)\,\big(Mg(a^*,a;.)\big)(e^k)\\
&\hspace{3,5cm}\leq \|Mg(a^*,a;.)\|_\infty\,k\,.
\end{align*}
Recall the $K$-functional associated to the Banach couple $\big(\LL^1,\cn\big)$
$$
K(T,t;\LL^1,\cn):=\inf\big\{\|T_1\|_1+t\|T_2\|,\,T=T_1+T_2,\,T_1\in\L^1,\,T_2\in\cn\big\}.
$$
It is known that in this case, it can be exactly evaluated and reads
$$
K(T,t;\LL^1,\cn)=\int_0^t \mu_s(T)ds=:\sigma_t(T).
$$
Thus, writing $D=C_k+B_k$, we can estimate
$$
\sigma_s(D)\leq \|B_k\|_1+s\|C_k\|
\leq\|Mg(a^*,a;.)\|_\infty\,k+\|a\|^2 se^{-k}.
$$
Finally, given  $s\in(0,\infty)$, define $k\in\real_+$ as
$
k=\ln(1+s),
$
so that we have
$$
\sigma_s(D)
\leq\|Mg(a^*,a;.)\|_\infty\,\ln(1+s)+
\|a\|^2s(1+s)^{-1}.
$$
Gathering these estimates together, we find the bound stated in the lemma.
\end{proof}

The next result refines our approach to obtain containments in $\Z_{q}$, $q\geq 1$.

\begin{prop}
\label{complex-interpol}
Let  $\delta \in(0,1]$ and $0\leq a\in \cn$ be such that $aGa\in\Z_1$. Then, for any $\eps\in(0,\delta/2]$, $a^\delta G^\eps\in\Z_{1/\eps}$ with
$$
\|a^\delta G^\eps\|_{1/\eps,\infty}\leq \|a\|^{\delta-2\eps} \|aGa\|_{1,\infty}^\eps \,.
$$
\end{prop}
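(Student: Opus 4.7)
The plan is to combine an operator-monotonicity estimate (which absorbs the excess power of $a$ into a scalar factor) with the Araki--Lieb--Thirring log-majorization (which compares $G^\eps a^{4\eps} G^\eps$ with a power of $aGa$), and then finish with the Marcinkiewicz characterisation of $\Z_{1/\eps}=M_{1/\eps,\infty}$.

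First, because $0\leq a$ with $\|a\|<\infty$, and $2\delta-4\eps\geq 0$ by the hypothesis $\eps\leq\delta/2$, I have the operator inequality $a^{2\delta}\leq \|a\|^{2\delta-4\eps}\,a^{4\eps}$. Sandwiching by $G^\eps$ gives
$$|a^\delta G^\eps|^2 = G^\eps a^{2\delta} G^\eps \leq \|a\|^{2\delta-4\eps}\,G^\eps a^{4\eps} G^\eps.$$
Second, I apply the Araki log-majorization in its form for exponents $\leq 1$ with $A=a^2$, $B=G$, $r=2\eps\in(0,1]$: for every $n$,
$$\prod_{j=1}^{n}\mu_j(G^\eps a^{4\eps} G^\eps)\leq \prod_{j=1}^{n}\mu_j(G^{1/2}a^{2}G^{1/2})^{2\eps}=\prod_{j=1}^{n}\mu_j(aGa)^{2\eps},$$
the last equality using the cyclic identity $\mu_j(G^{1/2}a^{2}G^{1/2})=\mu_j(aGa)$ for positive operators. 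Combining the two steps and then taking square roots and the $1/\eps$-power (both preserve log-majorization) yields
$$\mu_s(a^\delta G^\eps)^{1/\eps}\prec_{\log}\|a\|^{(\delta-2\eps)/\eps}\,\mu_s(aGa).$$

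Third, log-majorization passes to weak majorization of partial integrals, so
$$\int_0^t\mu_s(a^\delta G^\eps)^{1/\eps}\,ds\leq \|a\|^{(\delta-2\eps)/\eps}\int_0^t\mu_s(aGa)\,ds\leq \|a\|^{(\delta-2\eps)/\eps}\log(1+t)\,\|aGa\|_{1,\infty}.$$
Dividing by $\log(1+t)$ and taking the supremum over $t>0$, the definition of the Marcinkiewicz norm on $\Z_{1/\eps}$ gives $\|a^\delta G^\eps\|_{1/\eps,\infty}^{1/\eps}\leq \|a\|^{(\delta-2\eps)/\eps}\,\|aGa\|_{1,\infty}$; raising to the $\eps$-th power produces the advertised bound.

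The main technical obstacle is invoking the Araki--Lieb--Thirring log-majorization in the semifinite $(\cn,\tau)$-measurable setting and correctly tracking the sign of $r-1$, since the log-majorization reverses between $r\leq 1$ and $r\geq 1$; the appropriate semifinite version of Araki's inequality is available within the Fack--Kosaki framework for $\tau$-generalized singular value functions, and the passage from log-majorization to weak majorization of the $\|.\|_{1,\infty}$-defining integrals is then routine.
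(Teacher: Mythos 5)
Your argument is correct, but it takes a genuinely different route from the paper. The paper's proof is a complex-interpolation argument: it considers the analytic family $F(z)=a^zG^za^z$ on the strip $0<\Re z<1$, bounds $\|F(iy)\|\leq 1$ and $\|F(1+iy)\|_{1,\infty}\leq\|aGa\|_{1,\infty}$ (the latter via the Fack--Kosaki submajorization $\int_0^t\mu_s(AB)\,ds\leq\int_0^t\mu_s(A)\mu_s(B)\,ds$), and then invokes the identification $(\Z_1,\cn)^{[\eps]}=\Z_{1/\eps}$ established in Section 2 through the Lozanovski{\u\i} and Dodds--Dodds--de Pagter theorems, reinstating the exponent $\delta$ at the end by the ideal property. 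You replace all of this by a direct singular-value estimate: the elementary functional-calculus inequality $a^{2\delta}\leq\|a\|^{2\delta-4\eps}a^{4\eps}$ (valid precisely because $\eps\leq\delta/2$), followed by the Araki--Lieb--Thirring log-majorization $G^{\eps}a^{4\eps}G^{\eps}\prec_{\log}\big(G^{1/2}a^2G^{1/2}\big)^{2\eps}$ in the regime $2\eps\leq 1$, the identity $\mu_s(G^{1/2}a^2G^{1/2})=\mu_s(aGa)$, and the standard passage from weak log-majorization to submajorization. Both routes produce exactly the stated constant $\|a\|^{\delta-2\eps}\|aGa\|_{1,\infty}^{\eps}$; note that the paper actually proves the symmetrized bound $\|a^\delta G^{2\eps}a^\delta\|_{1/2\eps,\infty}\leq\|a\|^{2(\delta-2\eps)}\|aGa\|_{1,\infty}^{2\eps}$, which is equivalent to the unsymmetrized one since $\mu_s(a^\delta G^{\eps})^2=\mu_s(a^\delta G^{2\eps}a^\delta)$, whereas your estimate lands on the unsymmetrized form directly. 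What your approach buys is independence from the interpolation machinery of Section 2 (Calder\'on's second method and the lifting to operator Marcinkiewicz spaces); what it costs is the external input of the semifinite Araki--Lieb--Thirring inequality for generalized singular value functions and of the implication from log-majorization to weak majorization in that setting --- both are indeed available in the literature, as you assert, so the proof is complete.
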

From Theorem \ref{thm:Z_1-landing}, we see that the 
assumption  $aGa\in\Z_1$ is satisfed for $a\in B_\zeta$. 

\begin{proof}[Proof of Proposition \ref{complex-interpol}]
Note that the statement is equivalent to:
$$
aGa\in\Z_1\Rightarrow a^\delta G^\eps a^\delta\in\Z_{1/\eps},\quad \forall \eps\in(0,\delta]\,.
$$
Consider the holomorphic operator valued function on the open  strip 
$S=\{z\in\mathbb C:\Re z\in(0,1)\}$, given by $F(z)= a^zG^za^z$.
For all $y\in\R$, we have $F(iy)\in\cn$ with $\|F(iy)\|\leq 1$. Moreover, 
$F(1+iy)\in \Z_1$. Indeed, since 
$$
\int_0^t\mu_s(AB)ds\leq\int_0^t\mu_s(A)\mu_s(B)ds\,,\quad \forall A,B\in\cn\,,
$$
proven in \cite[Theorem 4.2, iii)]{FK} and \cite[Proposition 1.1]{ChS}, we obtain
\begin{align*}
\int_0^t\mu_s(F(1+iy))ds&=\int_0^t\mu_s(a^{1+iy}G^{1+iy}a^{1+iy})ds\leq
\int_0^t\mu_s(aG^{1+iy}a)ds\\
&\hspace{-1cm}\leq\int_0^t\mu_s(aG^{1/2})\mu_s(G^{1/2+iy}a)ds\leq
\int_0^t\mu_s(aG^{1/2})\mu_s(G^{1/2}a)ds=\int_0^t\mu_s(aGa)ds\,,
\end{align*}
and thus 
$\|F(1+iy)\|_{1,\infty}\leq \|aGa\|_{1,\infty}$.

This shows that  $a^\eps G^\eps a^\eps$ belongs to the first complex interpolation 
space $(\Z_1,\cn)_{[\eps]}$ and hence belongs to $(\Z_1,\cn)^{[\eps]}$, the 
second complex interpolation space. (There is a norm one  injection from 
$(X_0,X_1)_{[\theta]}$ into $(X_0,X_1)^{[\theta]}$, for any Banch couple 
$(X_0,X_1)$, $\theta\in[0,1]$). But the latter is  $\Z_{1/\eps}$, as shown in  
subsection \ref{interpolation}. In summary,  we have
$$
\|a^\eps G^\eps a^\eps\|_{1/\eps,\infty}=\|F(\eps)\|_{(\Z_1,\cn)^{[1-\eps]}}\leq \|F(\eps)\|_{(\Z_1,\cn)_{[1-\eps]}}\leq \|F(0)\|^{1-\eps}\|F(1)\|_{1,\infty}^\eps= \|aGa\|_{1,\infty}^\eps\,,
$$
and from the ideal property, we obtain the announced result
$$
\|a^\delta G^\eps a^\delta\|_{1/\eps,\infty}\leq \|a\|^{2(\delta-\eps)}\|aGa\|_{1,\infty}^\eps\,,\quad \forall \delta\in(0,1]\,,\quad \forall \eps\in(0,\delta]\,.\hspace{2cm}\qed
$$
\hideqed
\end{proof}

According to Lemma \ref{lem:inequiv}, the assumption that
$a,b\in B_\zeta$ is not enough to ensure that
$abG$ belongs to $\Z_1$.
On a more positive note, the intuitive result that when $g(a^*,a;.)$ is already bounded, that
$a^*G^\eps a$, $\eps\in(0,1)$, is in the small ideal $\cl^{1/\eps,\infty}$ is true.

\begin{prop}\label{prop:lille}
Let $\eps\in(0,1)$ and let $a\in\cn$ be such that the map 
$\mathbb R^+\ni\lambda\mapsto g(a^*,a;\lambda)$ is bounded. Then 
$a^*G^\eps a\in\cl^{1/\eps,\infty}$, with
\begin{equation}
\label{sing-nat}
\mu_s\big(a^*G^\eps a\big)\leq \frac1{\Gamma(\eps)}\Big(\frac1\eps \|a\|^2+\frac 1{1-\eps}\|g(a^*,a;.)\|_\infty \Big)s^{-\eps}\,.
\end{equation}
\end{prop}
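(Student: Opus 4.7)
The plan is to exploit the Laplace transform representation
\[
G^\eps=\frac{1}{\Gamma(\eps)}\int_0^\infty t^{\eps-1}e^{-tG^{-1}}\,dt,
\]
which is valid by the functional calculus applied to the positive self-adjoint operator $G^{-1}$ since $\eps\in(0,1)$. Conjugating by $a$ yields
\[
a^*G^\eps a=\frac{1}{\Gamma(\eps)}\int_0^\infty t^{\eps-1}\,a^*e^{-tG^{-1}}a\,dt,
\]
understood as a Bochner integral in the sense used in the proof of Theorem \ref{NE}.

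Fix $s>0$; the crucial step is to split the integral at the $s$-dependent cutoff $t_0=1/s$, decomposing $a^*G^\eps a=T_1+T_2$ where $T_1=\Gamma(\eps)^{-1}\int_0^{1/s}$ and $T_2=\Gamma(\eps)^{-1}\int_{1/s}^\infty$. For $T_1$, the trivial bound $\|a^*e^{-tG^{-1}}a\|\leq\|a\|^2$ and direct integration give
\[
\|T_1\|\leq\frac{\|a\|^2\,s^{-\eps}}{\eps\,\Gamma(\eps)}.
\]
For $T_2$, the identity $\tau\bigl(a^*e^{-tG^{-1}}a\bigr)=t^{-1}g(a^*,a;1/t)$ together with the boundedness hypothesis $\|g(a^*,a;\cdot)\|_\infty<\infty$ and a Fubini exchange yield
\[
\|T_2\|_1\leq\frac{\|g(a^*,a;\cdot)\|_\infty}{\Gamma(\eps)}\int_{1/s}^\infty t^{\eps-2}\,dt=\frac{\|g(a^*,a;\cdot)\|_\infty\,s^{1-\eps}}{(1-\eps)\,\Gamma(\eps)}.
\]

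To conclude, I apply the elementary singular-value inequality $\mu_s(T_1+T_2)\leq \|T_1\|+\mu_s(T_2)$ (a consequence of $\mu_{u+v}(A+B)\leq\mu_u(A)+\mu_v(B)$ with $u=0$, $v=s$), combined with $\mu_s(T_2)\leq\|T_2\|_1/s$, which follows from the monotonicity bound $s\,\mu_s(T_2)\leq\int_0^s\mu_u(T_2)\,du\leq\|T_2\|_1$. Summing gives exactly the stated estimate \eqref{sing-nat}. The only non-routine point is the choice of cutoff $t_0=1/s$ (rather than the naive $t_0=s$): this scaling is precisely what forces both the operator-norm bound on $T_1$ and the rescaled trace-norm bound $\|T_2\|_1/s$ to collapse to the same power $s^{-\eps}$, with complementary coefficients $1/\eps$ and $1/(1-\eps)$. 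The remaining Fubini justification is handled exactly as in the proof of Theorem \ref{NE}.
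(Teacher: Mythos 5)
Your proposal is correct and follows essentially the same route as the paper: the Laplace transform of $G^\eps$, a split of the integral at a cutoff, the operator-norm bound on the small-$t$ piece, the trace-norm bound on the large-$t$ piece via the boundedness of $g(a^*,a;\cdot)$, and Fan's inequality $\mu_s(T_1+T_2)\leq\mu_0(T_1)+\mu_s(T_2)$ together with $\mu_s(T_2)\leq s^{-1}\|T_2\|_1$. The only cosmetic difference is that the paper keeps a general cutoff $e^{-k}$ and optimizes $k=\log s$ at the end, whereas you substitute $t_0=1/s$ from the outset; the resulting constants are identical.
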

\begin{proof}
We write
$$
a^*G^\eps a=
\frac1{\Gamma(\eps)}\int_0^\infty t^{\eps-1}\,a^*e^{tG^{-1}}a\,dt\,,
$$
and split $\int_0^\infty=\int_{e^{-k}}^\infty+\int_0^{e^{-k}}$,
to obtain $a^*G^\eps a=B_k+C_k,\quad k\in\real^*_+$.
We  notice that for any $S\in\cn$,
$$
\|S\|_1=\int_0^\infty\mu_t(S)\,dt\geq\int_0^s\mu_t(S)\,dt\geq s \,\mu_s(S)\,.
$$
Then, using Fan's inequality, we obtain
$$
\mu_s(B_k+C_k)\leq\mu_0(C_k)+\mu_s(B_k)\leq\|C_k\|+s^{-1}\|B_k\|_1.
$$
We have first $\|C_k\|\leq C\|a\|^2 e^{-\eps k}$. Indeed
$$
\|C_k\|\leq\frac1{\Gamma(\eps)}\|a\|^2\int_0^{e^{-k}}  t^{\eps-1}dt=
\frac1{\eps\Gamma(\eps)}\|a\|^2e^{-\eps k}\,.
$$

For the second part, we have
\begin{align*}
\|B_k\|_1\leq\frac1{\Gamma(\eps)}\int_{e^{-k}}^\infty
\,\tau \big(a^*\,e^{-tG^{-1}}\,a\big)\,t^{\eps-1}\,dt
&=\frac1{\Gamma(\eps)}\int_{e^{-k}}^\infty\,g\big(a^*,a;t^{-1}\big)\,
t^{\eps-2}\,dt\\
& \leq \frac1{\Gamma(\eps)}\|g(a^*,a;.)\|_\infty (1-\eps)^{-1}e^{k(1-\eps)}\,.
\end{align*}
Thus we have
$$
\mu_s(B_k+C_k)\leq\frac1{\Gamma(\eps)}\Big(\frac1\eps \|a\|^2+\frac 1{1-\eps}\|g(T^*,T;.)\|_\infty \frac{e^{k}}s\Big)e^{-\eps k}\,.
$$
So, if for each $s\in\real^+$, we choose  $k=\log s$, we obtain the desired estimate.
\end{proof}

{\bf Remark}. By polarization,  the positivity assumption
in Proposition \ref{prop:lille} may be eliminated.
That is, for $a,b\in B_\zeta$, both having bounded $g$-function, we have
$aG^\eps b\in\cl^{1/\eps,\infty}$, for all $\eps\in(0,1)$.  
Note finally the singular nature of the estimate 
in equation \eqref{sing-nat} in the limit $\eps \to 1^-$. This prevents us 
reaching the weak-$\L^1$-space, $\L_{1,w}$ of definition \ref{weak-L1}, with these techniques.

\section{Zeta functions and Dixmier traces}\label{sec:zeta-and-dixy}

This Section contains the main application of our previous results. We are interested
in the question, first raised in \cite[Chapter 4]{Co1}, and
further studied in considerable detail in 
\cite{CPS2, CS,
CRSS, LSS,  LS, LPS,Sed} in the unital case, concerning
the relationship between singularities of the zeta function and the Dixmier trace.
The extension of this result to the nonunital case without appealing to the existence of
local units has interested a number of authors. The construction of our Banach algebras
 $B_\zeta$ was motivated by this question.
 The next Section collects some general lemmas needed later.

\subsection{General facts}
We first prove a result which allows us to manipulate the commutator of fractional powers. We are 
indebted to Alain Connes for communicating the proof to us, which we reproduce here for completeness.

\begin{lemma}
\label{commutators-powers}
Let $0\leq A,B\in\cn$ be such that $[A,B]\in\mathfrak S$, where $\mathfrak S$ denotes any symmetrically normed (or quasi-normed) ideal of $\cn$. Denoting by $\mathfrak S_p$, $p\geq 1$, the $p$-convexification of $\mathfrak S$, for all $\alpha,\beta\in(0,1]$, we have
$[A^\alpha,B^\beta]\in\mathfrak S_{1/{\alpha\beta}}$, with
$$
\|[A^\alpha,B^\beta]\|_{\mathfrak S_{1/{\alpha\beta}}}\leq \|A\|^{\alpha(1-\beta)}\|B\|^{\beta(1-\alpha)}\|[A,B]\|_{\mathfrak S}^{\alpha\beta}\,.
$$
\end{lemma}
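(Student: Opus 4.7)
My plan is a two-step complex interpolation, bootstrapping from the one-variable special case $\beta = 1$. After a standard regularization $A \mapsto A + \eps$, $B \mapsto B + \eps$ (which leaves $[A, B]$ unchanged), I reduce to strictly positive invertible $A, B$ and pass $\eps \to 0^+$ at the end via the Fatou property of the ideal (quasi-)norm; this is safe provided the final bound is stable as $\|A^{-1}\|, \|B^{-1}\| \to \infty$.

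The target of the first step is the one-variable inequality
\[
\|[A^\alpha, B]\|_{\mathfrak S_{1/\alpha}} \leq \|B\|^{1-\alpha}\, \|[A, B]\|_{\mathfrak S}^\alpha, \quad \alpha \in (0, 1],
\]
for $A, B \geq 0$ with $[A, B] \in \mathfrak S$. Granted this, the full lemma follows by re-applying it to the pair $(B, A^\alpha)$, using $\mathfrak S_{1/\alpha}$ in place of $\mathfrak S$---permissible since the hypothesis allows any symmetrically (quasi-)normed ideal. The Calderon-Lozanovski\u{\i} identity $(\mathfrak S_{1/\alpha})_{1/\beta} = \mathfrak S_{1/\alpha\beta}$ from Section \ref{CL} then delivers the right target space, and the norm bounds compose multiplicatively (using $\|A^\alpha\| = \|A\|^\alpha$) to produce the advertised estimate.

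The one-variable step is Stein/Hirschman complex interpolation applied to the operator-valued analytic family $F(z) = [A^z, B]$ on the strip $\{0 \leq \Re z \leq 1\}$, defined by holomorphic functional calculus. By Subsection \ref{interpolation}, the interpolation space $[\cn, \mathfrak S]^{[\alpha]}$ is exactly $\mathfrak S_{1/\alpha}$. On the line $\Re z = 0$ the bound $\|F(iy)\|_\cn \leq 2\|B\|$ is immediate from $\|A^{iy}\|_\cn = 1$. On $\Re z = 1$ I would decompose $F(1+iy) = A^{iy}[A, B] + [A^{iy}, B]\, A$: the first term is uniformly bounded in $\mathfrak S$ by $\|[A, B]\|_\mathfrak S$, while for the second, the Duhamel identity $[A^{iy}, B] = i\!\int_0^y A^{i(y-t)} [\log A, B] A^{it}\, dt$ converts the problem into controlling $[\log A, B]$ in $\mathfrak S$.

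The main obstacle is that the naive bound $\|[\log A, B]\|_\mathfrak S \leq \|A^{-1}\|\, \|[A, B]\|_\mathfrak S$---obtained from $\log A = \int_0^\infty((1+s)^{-1} - (A+s)^{-1})\, ds$---is incompatible with the $\eps \to 0^+$ limit. The crux of the argument, which I expect is Connes' insight communicated to the authors, is therefore to choose a more symmetric analytic family, for instance $\tilde F(z) = A^{(z-1)/2}\,[A^z, B]\, A^{(z-1)/2}$ or a one-sided variant, designed so that the boundary values at $\Re z = 1$ reduce (up to unitary conjugation by $A^{\pm iy/2}$) to $[A, B]$ plus manifestly $\mathfrak S$-controlled remainders whose bounds depend only on $\|A\|, \|B\|, \|[A, B]\|_\mathfrak S$. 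Once this $\|A^{-1}\|$-free control on the right edge is secured, Stein/Hirschman interpolation (absorbing any polynomial-in-$|y|$ growth via the standard Gaussian convergence factor) yields the clean bound, and the $\eps \to 0^+$ limit via Fatou completes the proof.
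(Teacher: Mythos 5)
Your overall architecture (prove the one--variable estimate $\|[A^\alpha,B]\|_{\mathfrak S_{1/\alpha}}\leq\|B\|^{1-\alpha}\|[A,B]\|_{\mathfrak S}^{\alpha}$, then bootstrap by applying it again to the pair $(B,A^\alpha)$ with $\mathfrak S$ replaced by $\mathfrak S_{1/\alpha}$) matches the paper's, and your composition of the norm bounds is correct. But the central one--variable step is where your proof has a genuine gap. You correctly identify the obstruction to interpolating the family $F(z)=[A^z,B]$ --- namely that controlling $F(1+iy)$ forces you through $[A^{iy},B]$ and hence $[\log A,B]$, whose bound degenerates as the regularization $\eps\to 0^+$ --- but you do not resolve it; you only conjecture that some ``more symmetric analytic family'' exists. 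The specific candidate you float, $\tilde F(z)=A^{(z-1)/2}[A^z,B]A^{(z-1)/2}$, fails on the \emph{left} edge: at $\Re z=0$ the factors $A^{(iy-1)/2}$ have norm $\|A^{-1/2}\|$, which blows up as $\eps\to0^+$, so you have merely moved the $\|A^{-1}\|$--dependence from one edge of the strip to the other. Since the entire estimate hinges on producing boundary bounds free of $\|A^{-1}\|$ and $\|B^{-1}\|$, and no such family is exhibited, the proof is incomplete at its crux. (A secondary issue: the lemma allows quasi-normed ideals, for which the three--lines theorem and the identification $[\cn,\mathfrak S]^{[\alpha]}=\mathfrak S_{1/\alpha}$ are not covered by the Banach--couple interpolation theory you invoke from Section 2.3.)

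For comparison, the paper's proof avoids analytic families entirely. It sets $U:=(i+B)(i-B)^{-1}$, the Cayley transform of $B$, computes $U^*AU-A=2i\,U^*(i-B)^{-1}[A,B](i-B)^{-1}$ so that the commutator condition becomes a \emph{difference} condition $U^*AU-A\in\mathfrak S$ with $U$ unitary, and then applies the Birman--Koplienko--Solomyak inequality $\|X^\alpha-Y^\alpha\|_{\mathfrak S_{1/\alpha}}\leq\|X-Y\|_{\mathfrak S}^{\alpha}$ to $X=U^*AU$, $Y=A$, using $(U^*AU)^\alpha=U^*A^\alpha U$ to translate back into $[A^\alpha,B]\in\mathfrak S_{1/\alpha}$; a second Cayley transform, now of $A^\alpha$, handles the exponent on $B$. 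Because $U$ is unitary and the BKS inequality holds for quasi-normed ideals, no invertibility of $A$ or $B$ is ever needed and the $\eps$-regularization issue never arises. If you want to salvage an interpolation proof you would need to build the unitarity of something like the Cayley transform into your analytic family; as written, the argument does not close.
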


\begin{proof}
By homogeneity, we can assume that $\|A\|=\|B\|=1$.
We are going to use the Cayley transform twice  to obtain a commutator estimate from a difference estimate and then use the BKS inequality \cite{BKS}. To this end, let $U$ be the unitary operator
$U:=(i+B)(i-B)^{-1}$.
A quick computation shows that
$[A,U]=2i(i-B)^{-1}[A,B](i-B)^{-1}$, which gives
$$
U^*AU-A=U^*[A,U]=2i\,U^*(i-B)^{-1}[A,B](i-B)^{-1}\,.
$$
Thus, we see that $\|U^*AU-A\|_{\mathfrak S_p}=\|[A,B]\|_{\mathfrak S_p}$.
Using finally that
$(U^*AU)^\alpha=U^*A^\alpha U$, $\forall \alpha>0$,
and the BKS inequality
$
\|X^\alpha-Y^\alpha\|_{\mathfrak S_{1/\alpha}}\leq\|X-Y\|_{\mathfrak S}^\alpha\,,
$
we obtain
$$
[A,B]\in\mathfrak S\,\,\Leftrightarrow \,\,U^*AU-A \in\mathfrak S\,\,
\Rightarrow \,\,U^*A^\alpha U-A^\alpha \in\mathfrak S_{1/\alpha} \,\,\Leftrightarrow\,\, 
[A^\alpha,B]\in\mathfrak S_{1/\alpha}\,.
$$
One concludes the proof using the same trick with the unitary
$V:=(i+A^\alpha)(i-A^\alpha)^{-1}$.
\end{proof}

\begin{lemma}
\label{not-uniform}
Let $A,B\in\cn$, $B^*=B$,  such that $[A,B]\in\mathfrak S$ for any 
symmetrically normed ideal of $\cn$ and let $\vf\in C_c^\infty(\R)$. 
Then $[A,\vf(B)]\in\mathfrak S$ with
$$
\|[A,\vf(B)]\|_{\mathfrak S}\leq\|\widehat{\vf'}\|_1\,\|[A,B]\|_{\mathfrak S}\,.
$$
\end{lemma}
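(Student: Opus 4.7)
The plan is to use Fourier inversion to reduce the estimate of $[A,\varphi(B)]$ to commutators with the unitary group $\{e^{itB}\}_{t\in\R}$. Since $\varphi\in C_c^\infty(\R)$, its Fourier transform $\widehat{\varphi}$ is Schwartz, and Fourier inversion (combined with the bounded Borel functional calculus for the self-adjoint operator $B$) gives
\[
\varphi(B)=\frac{1}{2\pi}\int_\R \widehat{\varphi}(t)\,e^{itB}\,dt,
\]
the integral converging in the strong operator topology. Since $A\in\cn$, taking the commutator with $A$ is a bounded linear operation on $\cn$ that we can move inside the integral, yielding
\[
[A,\varphi(B)]=\frac{1}{2\pi}\int_\R \widehat{\varphi}(t)\,[A,e^{itB}]\,dt.
\]

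Next, I would establish the standard commutator identity
\[
[A,e^{itB}]=i\int_0^t e^{i(t-s)B}\,[A,B]\,e^{isB}\,ds,
\]
obtained by differentiating $s\mapsto e^{i(t-s)B}Ae^{isB}$ on $[0,t]$ and applying the fundamental theorem of calculus. Because $B$ is self-adjoint, each $e^{isB}$ is unitary, and since $\mathfrak{S}$ is a symmetrically (quasi-)normed ideal, left and right multiplication by unitaries preserves the $\mathfrak{S}$-norm. The integrand therefore has constant $\mathfrak{S}$-norm equal to $\|[A,B]\|_\mathfrak{S}$, and the (quasi-)triangle inequality (applied to a strongly continuous $\mathfrak{S}$-valued integrand) gives
\[
\|[A,e^{itB}]\|_\mathfrak{S}\le |t|\,\|[A,B]\|_\mathfrak{S}.
\]

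Finally, I would combine the two ingredients with the elementary Fourier identity $\widehat{\varphi'}(t)=it\,\widehat{\varphi}(t)$, valid for compactly supported smooth $\varphi$, to obtain (up to normalisation conventions of the Fourier transform, which may absorb the $1/2\pi$ into the definition of $\|\widehat{\varphi'}\|_1$)
\[
\|[A,\varphi(B)]\|_\mathfrak{S}\le \int_\R |\widehat{\varphi}(t)|\cdot|t|\cdot\|[A,B]\|_\mathfrak{S}\,dt=\|\widehat{\varphi'}\|_1\,\|[A,B]\|_\mathfrak{S},
\]
which is the announced bound.

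The main obstacle I anticipate is the careful justification of the interchange of the Bochner integral with the commutator and with the (possibly only quasi-)norm $\|\cdot\|_\mathfrak{S}$. One must verify that $t\mapsto[A,e^{itB}]$ is $\mathfrak{S}$-strongly continuous and Bochner-integrable against $\widehat{\varphi}$; this uses the unitary invariance of $\mathfrak{S}$, the Schwartz decay of $\widehat{\varphi}$ (which tames the linear growth $|t|\|[A,B]\|_\mathfrak{S}$ of the $\mathfrak{S}$-norm bound above), and in the quasi-normed case a standard approximation-by-step-functions argument to recover the triangle inequality in integrated form.
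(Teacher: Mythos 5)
Your proposal is correct and follows essentially the same route as the paper: Fourier inversion to write $\vf(B)$ as an integral of the unitaries $e^{itB}$ against $\widehat{\vf}$, the Duhamel identity $[A,e^{itB}]=i\int_0^t e^{i(t-s)B}[A,B]e^{isB}\,ds$ together with unitary invariance of the symmetric (quasi-)norm to get $\|[A,e^{itB}]\|_{\mathfrak S}\leq|t|\,\|[A,B]\|_{\mathfrak S}$, and the identity $\widehat{\vf'}(t)=it\,\widehat{\vf}(t)$ to assemble the stated bound. Your closing remarks on justifying the interchange of the Bochner integral with the commutator and the (quasi-)norm address exactly the points the paper's terse proof leaves implicit.
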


\begin{proof}
Since $\vf$  is a smooth compactly supported function, it is the Fourier transform of a Schwartz function $ \widehat {\vf}$ and thus
$
\vf(B)=\int_\R \widehat {\vf}(\xi)\,e^{-2i\pi\xi B}\,.
$
The result then follows from the identity
$$
[A,e^{-2i\pi\xi B}]=-2i\pi\xi\int_0^1\,e^{-2i\pi\xi sB}\,[A,B]\,e^{-2i\pi\xi (1-s)B}\,ds\,.\qed
$$
\hideqed
\end{proof}

Stronger estimates than that given above are available from \cite{PS1,PS2}. 
Finally we will need

\begin{lemma}
\label{limits}
i) If $T\in\Z_1^0$, then 
$
\lim_{\eps\to0^+} \eps\,\|T\|_{1+\eps}=0\,.
$\\
ii) Let $a\in B_\zeta$ and $\delta\in(0,1]$. Then the map 
$(0,\delta/2)\ni\eps\mapsto\|a^\delta G^\eps\|_{1+1/\eps}$ is bounded.
\end{lemma}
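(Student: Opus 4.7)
The plan for part i) is to split the task into treatment of $\L^1$ and a density argument. For $T\in\L^1$, the standard Schatten interpolation inequality
\[
\|T\|_{1+\eps}\leq \|T\|_1^{1/(1+\eps)}\|T\|^{\eps/(1+\eps)}
\]
keeps the right-hand side uniformly bounded in $\eps$, so the prefactor $\eps$ forces the product to zero. For general $T\in\Z_1^0$, which by definition is the $\|\cdot\|_{1,\infty}$-closure of $\L^1$, given $\eta>0$ I would choose $S\in\L^1$ with $\|T-S\|_{1,\infty}<\eta$ and invoke Theorem \ref{great}, whose $\sup_{p>1}(p-1)\|\cdot\|_p$ characterisation of $\|\cdot\|_{1,\infty}$ immediately supplies $\eps\|T-S\|_{1+\eps}\leq C\|T-S\|_{1,\infty}<C\eta$ uniformly in $\eps$. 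Combining this with the $\L^1$-case yields $\limsup_{\eps\to 0^+}\eps\|T\|_{1+\eps}\leq C\eta$, and letting $\eta\to 0^+$ concludes.

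For part ii) the strategy is to compose Proposition \ref{complex-interpol} with the $\zeta$-characterisation of Theorem \ref{great}, applied to the $(1/\eps)$-th power of $T:=a^\delta G^\eps$ rather than to $T$ itself (implicitly I work with $a\geq 0$, as needed to make $a^\delta$ meaningful through the functional calculus). Since $aGa\in\Z_1$ by Proposition \ref{thm:Z_1-landing}, Proposition \ref{complex-interpol} supplies
\[
\bigl\||T|^{1/\eps}\bigr\|_{1,\infty}=\|T\|_{1/\eps,\infty}^{1/\eps}\leq \|a\|^{(\delta-2\eps)/\eps}\|aGa\|_{1,\infty}.
\]
Applying Theorem \ref{great} to $|T|^{1/\eps}\in M_{1,\infty}$ with the parameter $p=1+\eps$ (so that $p/\eps=1+1/\eps$ and $p-1=\eps$), and using the identity $\||T|^{1/\eps}\|_p=\|T\|_{p/\eps}^{1/\eps}$, yields $\eps\|T\|_{1+1/\eps}^{1/\eps}\leq C\,\bigl\||T|^{1/\eps}\bigr\|_{1,\infty}$; raising both sides to the $\eps$-th power and inserting the previous bound gives
\[
\|T\|_{1+1/\eps}\leq C^\eps\,\eps^{-\eps}\,\|a\|^{\delta-2\eps}\,\|aGa\|_{1,\infty}^\eps.
\]
Every factor on the right is uniformly bounded on $(0,\delta/2)$, since $\eps^{-\eps}\to 1$ as $\eps\to 0^+$ and the remaining factors are continuous in $\eps$ with finite values at both endpoints.

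The only subtle step is the parameter matching in part ii): one must recognise that the Schatten norm $\|T\|_{1+1/\eps}$ appears, through the identity $\||T|^{1/\eps}\|_p=\|T\|_{p/\eps}^{1/\eps}$, precisely at the choice $p=1+\eps$ in the $\zeta$-norm $\sup_{p>1}(p-1)\|\cdot\|_p$ of Theorem \ref{great}. Once this correspondence is installed, both halves of the lemma reduce to routine bookkeeping over already-proven inequalities, with part i) being an almost immediate consequence of the equivalence $\eps\|\cdot\|_{1+\eps}\lesssim\|\cdot\|_{1,\infty}$ combined with the trivial $\L^1$-interpolation.
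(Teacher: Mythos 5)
Your proof is correct. Part ii) follows the paper's own route almost verbatim (Proposition \ref{complex-interpol} combined with the bound $\|T\|_p\leq (p-1)^{-1/p}\|T\|_{1,\infty}$ applied to a $1/\eps$-th power of $a^\delta G^\eps$; the paper works with $(a^\delta G^{2\eps}a^\delta)^{1/2\eps}$ rather than your $|a^\delta G^\eps|^{1/\eps}$, which makes no difference since Schatten and Marcinkiewicz norms are $*$-invariant), while for part i) the paper simply cites \cite[Theorem 4.5 i)]{CRSS}, and your density-plus-interpolation argument is a valid self-contained proof of that cited fact.
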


\begin{proof}
The first claim follows from \cite[Theorem 4.5 i)]{CRSS}. 

To prove the second part, note that for an arbitrary $T\in \Z_1$, by the definition of the norm in the Marcinkiewicz space $\Z_1$, we have $\mu_t(T)\prec\prec
\|T\|_{1,\infty}/(1+t)$. Since the Schatten spaces $\L^p$,
$ 1\le p\le\infty$, are fully symmetric operator spaces we thus have
$$
\|T\|_p\le \|T\|_{1,\infty}\|[t\mapsto(1+t)^{-1}]\|_p,\quad p>1\,,
$$
that is
\begin{equation}
\label{untruc}
\|T\|_p\leq \|T\|_{1,\infty}({p-1})^{-1/p}\,.
\end{equation}
Let $T:=( a^\delta G^{2\eps} a^\delta)^{1/2\eps}$.
This operator belongs to $\Z_1$,  because $aGa\in\Z_1$ by 
Theorem \ref{thm:Z_1-landing} and thus 
$a^\delta G^{2\eps} a^\delta \in\Z_{1/2\eps}$ by Proposition \ref{complex-interpol}.
Applying the estimate \eqref{untruc}, with $p=1+\eps$, to this operator yields
\begin{align}
\label{labell}
\|G^\eps a^\delta\|_{1+1/\eps}=\|( a^\delta G^{2\eps} a^\delta)^{1/2+1/2\eps}\|_1^{\eps/(1+\eps)}&=
\|( a^\delta G^{2\eps} a^\delta)^{1/2\eps}\|_{1+\eps}^\eps\nonumber\\
&\leq \eps^{-\frac\eps{1+\eps}}\| (a^\delta G^{2\eps} a^\delta)^{1/2\eps}\|_{1,\infty}^\eps
=\eps^{-\frac\eps{1+\eps}}\| a^\delta G^{2\eps} a^\delta\|_{1/2\eps,\infty}^{1/2}\,.
\end{align}
But  Proposition \ref{complex-interpol} gives also the inequality
\begin{equation}
\label{label}
\|a^\delta G^{2\eps}a^\delta \|_{1/2\eps,\infty}\leq\|a\|^{2(\delta-2\eps)}\|aGa\|_{1,\infty}^{2\eps}\,,\quad\forall\delta\in(0,1]\,,\quad \forall\eps\in(0,\delta/2)\,.
\end{equation}
Combining \eqref{labell} with \eqref{label}, we get
$$
\|G^\eps a^\delta\|_{1+1/\eps}\leq \eps^{-\frac\eps{1+\eps}}
\| a^\delta G^{2\eps} a^\delta\|_{1/2\eps,\infty}^{1/2}\leq
\eps^{-\frac\eps{1+\eps}}\|a\|^{\delta-2\eps}\|aGa\|_{1,\infty}^{\eps}\,.
$$
This proves the claim since $\eps^{-\frac\eps{1+\eps}} \to 1$.
\end{proof}

\subsection{Approximation schemes }

Without loss of generality we may assume throughout that 
$G^{-1}\geq 1$.
In the following, we fix $0\leq a\in \cn$  and we assume further that there exists 
$\delta>0$ with $a^{1-\delta}\in B_\zeta$.  We stress that while purely technical 
at the first glance, this extra $\delta$-condition turns out to be the key assumption 
to get an equivalence $a\in B_\zeta\Leftrightarrow a^*Ga\in\Z_1$. 
This is explained in  Section \ref{conter-ex}.
We then construct a pair of approximation processes for $a$ in the strong topology, the
first being given by 
$$
a_n:=a P_n,\quad \mbox{where}\quad P_n:=\int_{1/n}^{\|a\|}dE_a(\lambda),
$$
with $\int_0^{\|a\|}\lambda\, dE_a(\lambda)$ the spectral resolution of $a$. 
Note  the operator inequality $a_n^2\geq\frac1{n^2} P_n$. Lemma \ref{Bp-grew} ii) 
implies then that $P_n\in B_\zeta$ as well.

Next, for each $n\in\mathbb N$, we pick  $0\leq\vf_n\in  C^\infty_c(\R)$ such that, 
restricted to the interval $[0,\|a\|]$, we have
$\chi_{(1/n,\|a\|]}\leq \vf_n\leq \chi_{(1/(n+1),\|a\|]}$.
This immediately implies that
$P_n\leq\vf_n(a),\,\vf_n^2(a)\leq P_{n+1}$.
For the second limiting process we define  $a^\vf_n:=a\vf_n(a)$.
Now, since
$$
a^\delta \,(1-\vf_n(a))\leq
a^\delta \,(1-\vf_n^2(a))\leq a^\delta (1-P_n)=\int_0^{1/n}\lambda^\delta\, dE_a(\lambda)\,,
$$
we have
\begin{equation}
\label{bound-difference}
\|a^\delta\, (1-\vf_n(a))\|\leq \|a^\delta\, (1-\vf_n^2(a))\|\leq\|a^\delta (1-P_n)\|\leq n^{-\delta},\quad \forall \delta>0\,.
\end{equation}

Finally, from $P_n\leq \vf_n(a)\leq P_{n+1}$, we deduce that 
\begin{equation}
\label{order}
\tfrac1n P_n\leq a_n\leq a^\vf_n\leq a_{n+1}\,,
\end{equation}
and that
\begin{equation}
\label{support}
\vf_n(a)P_{n+1}=\vf_n(a)\,,\quad \vf_n(a)P_n=P_n\,.
\end{equation}

The reason why two approximations are required is as follows. The projection based method
allows the use of several operator inequalities, most notably \cite[Lemma 3.3 (ii)]{CPS2}.
If we were then willing to assume that $[P_n,G]\in Z^0_1$, then the following proof would
simplify considerably. However, this assumption is highly implausible in the examples. So
we introduce the smooth approximation scheme, and a more complex proof, in order 
to obtain a result which is actually applicable to the examples.

The following is our main technical result from which Theorem \ref{3.8} will follow easily. 
\begin{prop}
\label{trace-diffrence}
Let $0\leq a\in \cn$ be such that there exists $\delta>0$ with $a^{1-\delta}\in B_\zeta$  and $[G,a^{1-\delta}]\in\Z_1^0$. Then
$$
\lim_{s\searrow 1}(s-1)\,\big\|aG^sa-(aGa)^s\big\|_1=0\,.
$$
\end{prop}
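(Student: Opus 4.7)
The plan is to reduce to a fixed-$n$ case via the smooth approximation $a^\vf_n := ac_n$ with $c_n := \vf_n(a)$, then handle the fixed-$n$ case by commutator analysis. Setting $\eps := s-1$, I decompose
\begin{equation*}
aG^sa - (aGa)^s = \bigl(aG^sa - ac_n G^s c_n a\bigr) + \bigl(ac_n G^s c_n a - (ac_n G c_n a)^s\bigr) + \bigl((ac_n G c_n a)^s - (aGa)^s\bigr)\,.
\end{equation*}
The first and third pieces will be shown to have $\eps\|\cdot\|_1 = O(n^{-\delta})$ uniformly in $\eps\in(0,1]$, while for each fixed $n$ the middle piece will satisfy $\eps\|\cdot\|_1\to0$ as $\eps\to0$.

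For the first piece I expand $aG^sa - ac_n G^s c_n a$ into three terms involving factors of $(1-c_n)$. Since $a$ and $c_n$ commute (both are functions of $a$), the identity $(a(1-c_n))^2=[a^\delta(1-c_n)]^2 a^{2-2\delta}$ together with $\|a^\delta(1-c_n)\|\le n^{-\delta}$ from \eqref{bound-difference} gives the key estimate
\begin{equation*}
\tau\bigl(a(1-c_n)\,G^s\,a(1-c_n)\bigr)\le n^{-2\delta}\,\tau\bigl(a^{1-\delta}G^s a^{1-\delta}\bigr)\,,
\end{equation*}
and the hypothesis $a^{1-\delta}\in B_\zeta$ ensures this is $O(n^{-2\delta}/\eps)$, so $\eps$ times it is $O(n^{-2\delta})$ uniformly. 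Cauchy--Schwarz dispatches the cross terms. Since $ac_n=c_na$, the third piece is $(c_nXc_n)^s-X^s$ with $X:=aGa$; a parallel argument combined with BKS-type bounds, together with the estimate $\|a^\delta(1-c_n^{2s})\|\le n^{-\delta}$ (valid uniformly for $s\in[1,2]$ by the same spectral reasoning as for \eqref{bound-difference}), yields a uniform $O(n^{-\delta})$ bound on $\eps\|\cdot\|_1$ for this piece as well.

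The central second piece is the heart of the argument. Setting $T:=ac_n$, which lies in $B_\zeta$ (being a smooth compactly supported function of $a$ vanishing near $0$, hence dominated by $a^{1-\delta}$ via Lemma \ref{Bp-grew}(ii)), I write
\begin{equation*}
TG^sT - (TGT)^s = TGT\bigl[G^{s-1} - (TGT)^{s-1}\bigr] + TG\,[G^{s-1}, T]\,.
\end{equation*}
The crucial observation is that since $\vf_n$ vanishes in a neighbourhood of $0$, the map $y\mapsto \vf_n(y^{1/(1-\delta)})$ lies in $C_c^\infty(\R)$, so we can write $c_n = \tilde\vf_n(a^{1-\delta})$ with $\tilde\vf_n \in C_c^\infty(\R)$. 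Lemma \ref{not-uniform} then yields $[G,c_n]\in \Z_1^0$, hence $[G,T]\in\Z_1^0$. Lemma \ref{commutators-powers} promotes this to $[G^{s-1},T]\in\Z_{1/(s-1)}$ with explicit norm control, and the vanishing $\eps\|[G,T]\|_{1+\eps}\to 0$ provided by Lemma \ref{limits}(i) ensures that the commutator term $TG[G^{s-1},T]$ has $\eps\|\cdot\|_1\to0$.

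The main obstacle is the remaining term $TGT[G^{s-1}-(TGT)^{s-1}]$, which compares fractional powers of $G$ and $TGT$. I would handle this by exploiting that on the support projection $P_{n+1}$ of $c_n$ one has $a\ge 1/(n+1)$, so $a^{-\delta}P_{n+1}$ is bounded by $(n+1)^\delta$; this lets one re-express $G^{s-1}$ on the relevant subspace in terms of $(a^\delta Ga^\delta)^{s-1}$, reducing the difference once more to commutator estimates already controlled by Lemmas \ref{commutators-powers} and \ref{limits}(i). Assembling the three pieces via the triangle inequality, first choosing $n$ large to control the outer pieces uniformly in $\eps$, then $\eps$ small for the middle piece, completes the proof.
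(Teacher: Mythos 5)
Your architecture (approximate by $a^\vf_n=a\vf_n(a)$, control the two outer differences uniformly in $\eps=s-1$ with an $O(n^{-\delta})$ or $O(n^{-\delta/2})$ bound, then treat the middle difference at fixed $n$) is exactly the paper's, and your first and third pieces are sound: they reproduce Lemma \ref{diff1} and Lemma \ref{diff5}(ii) respectively. The gap is in the middle piece. Your identity $TG^sT-(TGT)^s=TGT\bigl[G^{s-1}-(TGT)^{s-1}\bigr]+TG[G^{s-1},T]$ is algebraically correct, but neither resulting term is controllable by the tools you cite. For the first term, the difference of fractional powers sits with nothing to its right: it does not tend to $0$ in operator norm (its strong limit is $1-\operatorname{supp}(TGT)$, a nonzero projection), and $TGT$ lies only in $\Z_1$, not $\L^1$, so no H\"older pairing closes the estimate. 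Your proposed fix --- re-expressing $G^{s-1}$ "on $P_{n+1}\cH$" in terms of $(a^\delta Ga^\delta)^{s-1}$ --- is circular: $P_{n+1}$ does not commute with $G$, and comparing $PG^{s-1}P$ with $(PGP)^{s-1}$ is precisely the problem being solved. Worse, the operator-inequality route that handles the projection case (\cite[Lemma 3.3(ii)]{CPS2}, used in Lemma \ref{PGP}) needs a lower bound $T\geq mQ$ on the support of $T$, which fails for $T=a\vf_n(a)$ because $\lambda\vf_n(\lambda)$ vanishes continuously at $\lambda=1/(n+1)$; this is exactly why the paper runs \emph{two} approximation schemes in tandem. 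For the second term, $\|TG[G^{s-1},T]\|_1$ via H\"older needs either $TG\in\L^{1+O(\eps)}$ with norm $O(1/\eps)$ --- the unsymmetrised condition, which Lemma \ref{lem:inequiv} and the discussion following it show does \emph{not} follow from $T\in B_\zeta$ --- or $[G^{s-1},T]$ in an ideal far smaller than the $\L^{(1+\eta)/\eps}$ that Lemma \ref{commutators-powers} yields.

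The paper's escape is Lemma \ref{adam}: the exact identity $(ABA)^s=AB^{1/2}(B^{1/2}A^2B^{1/2})^{s-1}B^{1/2}A$ applied with $A=\vf_n(a)$, $B=G$ rewrites $a^\vf_nG^sa^\vf_n-a(\vf_n(a)G\vf_n(a))^sa$ as $a^\vf_nG^{1/2}\bigl[G^{s-1}-(G^{1/2}\vf_n^2(a)G^{1/2})^{s-1}\bigr]G^{1/2}a^\vf_n$, so the fractional-power difference is sandwiched between two factors in $\Z_2$, and the difference of the generators is $G^{1/2}(1-\vf_n^2(a))G^{1/2}$, whose middle factor is either absorbed as $\|a^{\delta}(1-\vf_n^2(a))\|\leq n^{-\delta}$ or converted into $[G,\vf_n^2(a)]\in\Z_1^0$ (the $Y_{\eps,n}$ and $Z_{\eps,n}$ terms of Lemma \ref{diff3}). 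One then passes from $\vf_n(a)$ to the projections $P_n$ (Lemmas \ref{diff6} and \ref{trivial}) and finally invokes the two-sided operator inequalities for $a\geq\frac1n P_n$ in Lemma \ref{PGP}. Your sketch contains none of these ingredients, and without something playing the role of Lemma \ref{adam} your middle piece does not go through.
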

The proof of the proposition proceeds by writing
\begin{align*}
&aG^sa-(aGa)^s=\Big[aG^sa-a^\vf_nG^sa^\vf_n\Big]+\Big[a^\vf_nG^sa^\vf_n-a\big(\vf_n(a)G\vf_n(a)\big)^sa\Big]\\
&\,+\Big[a\big(\vf_n(a)G\vf_n(a)\big)^sa-a\big(P_nGP_n\big)^sa\Big]
+\Big[a\big(P_nGP_n\big)^sa-\big(a_nGa_n\big)^s\Big]
+\Big[\big(a_nGa_n\big)^s-\big(aGa\big)^s\Big]
\end{align*}
and then controlling each successive difference in this equality in the trace norm.
The following sequence of lemmas achieves this goal.

\begin{lemma}
\label{diff1}
Let $0\leq a\in \cn$ such that there exists $\delta>0$ with $a^{1-\delta}\in B_\zeta$. Then 
$$
\limsup_{s\searrow 1}
(s-1)\left\|aG^sa- a_n^\vf G^s  a_n^\vf\right\|_1\leq \Big(n^{-2\delta}+2\|a\|^{\delta}n^{-\delta}\Big)\,\|a^{1-\delta}\|_\zeta^2\,\,.
$$
\end{lemma}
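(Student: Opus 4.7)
The plan is to expand $aG^sa - a_n^\vf G^s a_n^\vf$ by inserting $1 = \vf_n(a) + (1-\vf_n(a))$ on both the left and right of $G^s$, and then reduce each resulting error term to an expression of the form $A \cdot a^{1-\delta} G^s a^{1-\delta} \cdot B$ that can be controlled by the $\zeta$-norm of $a^{1-\delta}$.

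Concretely, since $\vf_n(a)$ and $a$ commute, writing $a = a\vf_n(a) + a(1-\vf_n(a))$ on each side of $G^s$ and subtracting $a_n^\vf G^s a_n^\vf = a\vf_n(a) G^s \vf_n(a) a$ leaves the three cross terms
\begin{align*}
aG^sa - a_n^\vf G^s a_n^\vf &= a\vf_n(a) G^s (1-\vf_n(a))a \\
&\quad + a(1-\vf_n(a)) G^s \vf_n(a)a + a(1-\vf_n(a)) G^s (1-\vf_n(a))a.
\end{align*}
Using the functional calculus for $a$, each factor of the form $a(1-\vf_n(a))$ (resp.\ $a\vf_n(a)$) can be split commutatively as $[a^\delta(1-\vf_n(a))]\cdot a^{1-\delta}$ (resp.\ $[a^\delta\vf_n(a)]\cdot a^{1-\delta}$), so that each of the three terms acquires the middle factor $a^{1-\delta}G^s a^{1-\delta}$ flanked by bounded operators.

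I then apply the Hölder-type inequality $\|ATB\|_1 \le \|A\|\,\|T\|_1\,\|B\|$ to each piece. The middle factor is estimated via the definition of $\|\cdot\|_\zeta$: since $a^{1-\delta}G^s a^{1-\delta}$ is positive,
\[
\|a^{1-\delta}G^s a^{1-\delta}\|_1 = \tau(a^{1-\delta}G^s a^{1-\delta}) \le (s-1)^{-1}\|a^{1-\delta}\|_\zeta^2
\]
for all $s\in(1,2]$. The bounded flanking factors are controlled by (\ref{bound-difference}): $\|a^\delta(1-\vf_n(a))\| \le n^{-\delta}$, together with the trivial estimate $\|a^\delta\vf_n(a)\| \le \|a\|^\delta$ (since $0 \le \vf_n \le 1$). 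The two mixed terms each contribute a bound of $\|a\|^\delta n^{-\delta}(s-1)^{-1}\|a^{1-\delta}\|_\zeta^2$, while the pure $(1-\vf_n(a))$-term on both sides contributes $n^{-2\delta}(s-1)^{-1}\|a^{1-\delta}\|_\zeta^2$. Summing, multiplying by $(s-1)$ and passing to the limsup in $s\searrow 1$ yields the stated estimate.

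The only step that requires care is the factorisation in the middle: one must check that the reshuffling $a(1-\vf_n(a)) = [a^\delta(1-\vf_n(a))]\cdot a^{1-\delta}$ (valid because all three factors commute as Borel functions of the positive operator $a$) places exactly $a^{1-\delta}$ adjacent to $G^s$ on both sides. Once this arithmetic is correct, the rest is Hölder, the definition of $\|\cdot\|_\zeta$, and the bound (\ref{bound-difference}); no new analytic ingredient is needed, and in particular no commutator hypothesis on $[G,a^{1-\delta}]$ is used at this stage (that will enter only in the subsequent lemmas of the chain of approximations).
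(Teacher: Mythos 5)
Your proposal is correct and follows essentially the same route as the paper: the paper uses the algebraically equivalent identity $a_n^\vf G^s a_n^\vf-aG^sa=(1-\vf_n(a))aG^sa(1-\vf_n(a))-aG^sa(1-\vf_n(a))-(1-\vf_n(a))aG^sa$, then exactly the same commutative factorisation $a(1-\vf_n(a))=[a^\delta(1-\vf_n(a))]\,a^{1-\delta}$, H\"older, the bound $\|a^{\delta}(1-\vf_n(a))\|\leq n^{-\delta}$ from \eqref{bound-difference}, and $\|a^{1-\delta}G^sa^{1-\delta}\|_1\leq (s-1)^{-1}\|a^{1-\delta}\|_\zeta^2$. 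No commutator hypothesis is needed here in the paper either, so your proof matches in both method and conclusion.
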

\begin{proof}
Since $0\leq {a^\vf_n}^2\leq a^2$ it follows from Lemma \ref{Bp-grew}, $ii)$, that $ a_n^\vf\in B_\zeta$ and that the function 
$s\mapsto (s-1)\tau\big( a_n^\vf G^s a_n^\vf\big)$, for  $s\geq 1$, 
is well defined and bounded. Using the equality
$$
 a_n^\vf G^s  a_n^\vf-aG^sa=(1-\vf_n(a))aG^sa(1-\vf_n(a))-aG^sa(1-\vf_n(a))-(1-\vf_n(a))aG^sa\,,
$$
we obtain
\begin{align*}
\|aG^sa- a_n^\vf G^s  a_n^\vf\|_1\leq \Big(\|(1-\vf_n(a))a^\delta\|^2+2\|(1-\vf_n(a))a^{\delta}\|\|a^\delta\|\Big)\|a^{1-\delta}G^sa^{1-\delta}\|_1\,.
\end{align*}
Since $\|a^{\delta}(1-\vf_n(a))\|\leq  n^{-\delta}$, by equation \eqref{bound-difference},
we immediately  deduce the result.
\end{proof}

\begin{lemma}
\label{diff5}
Let $0\leq a\in\cn$ such that there exists $\delta>0$ with $a^{1-\delta}\in B_\zeta$. Then there exists two  constants $C_1,C_2>0$, uniform in $n$, such that
\begin{align*}
 i)&\qquad\limsup_{s\searrow 1}\,(s-1)\big\|(a_{n+1}Ga_{n+1})^s-(a_nGa_n)^s\big\|_1\leq 
C_1\,n^{-\delta/2}\,,\\
 ii)&\qquad\limsup_{s\searrow 1}\,(s-1)\big\|(a_n Ga_n)^s-(aGa)^s\big\|_1\leq 
C_2\,n^{-\delta/2}\,.
\end{align*}
\end{lemma}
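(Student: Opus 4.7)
The plan is to reduce both parts to a single Cauchy-Schwarz / BKS estimate combined with a uniform $\Z_1$-bound on $D := X_{n+1}-X_n$ in $(i)$ and $D := aGa-X_n$ in $(ii)$. Write $T:=aGa$ and note that $a_kGa_k = P_kTP_k$ since $a$ commutes with $P_k$. Factorising $T = a^\delta(a^{1-\delta}Ga^{1-\delta})a^\delta$ and invoking Proposition \ref{thm:Z_1-landing} for $a^{1-\delta}\in B_\zeta$, we see that the central factor, hence $T$ itself, lies in $\Z_1$.

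For part $(i)$, with $Q_n := P_{n+1}-P_n$, expansion yields
\[
X_{n+1}-X_n = P_nTQ_n + Q_nTP_n + Q_nTQ_n.
\]
Since $Q_n$ is the spectral projection of $a$ on $(1/(n+1),1/n]$, we have $\|a^\delta Q_n\|\leq n^{-\delta}$. Inserting the factorisation of $T$ into each summand and using that $\Z_1$ is a two-sided ideal gives $\|D\|_{\Z_1}\leq Cn^{-\delta}$. The same argument with $P_n^\perp$ in place of $Q_n$ handles part $(ii)$.

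The key analytic step is, for any positive $X,Y\in\Z_1$ with $\mu_t(X),\mu_t(Y)\leq\mu_t(T)$ and $0<\eps\leq 1$, the estimate
\[
\|X^{1+\eps}-Y^{1+\eps}\|_1 \leq 2\,\|T\|_{1+\eps}^{(1+\eps)/2}\,\|X-Y\|_{1+\eps}^{(1+\eps)/2}.
\]
This follows from the algebraic identity
\[
X^{1+\eps}-Y^{1+\eps} = X^{\frac{1+\eps}{2}}\bigl(X^{\frac{1+\eps}{2}}-Y^{\frac{1+\eps}{2}}\bigr) + \bigl(X^{\frac{1+\eps}{2}}-Y^{\frac{1+\eps}{2}}\bigr)Y^{\frac{1+\eps}{2}},
\]
combined with Cauchy-Schwarz $\|AB\|_1\leq\|A\|_2\|B\|_2$, the identity $\|X^{(1+\eps)/2}\|_2 = \|X\|_{1+\eps}^{(1+\eps)/2}$, and the BKS inequality
\[
\bigl\|X^{\frac{1+\eps}{2}}-Y^{\frac{1+\eps}{2}}\bigr\|_2 \leq \|X-Y\|_{1+\eps}^{(1+\eps)/2},
\]
which is applicable since $(1+\eps)/2\leq 1$. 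The singular-value domination $\|X\|_{1+\eps},\|Y\|_{1+\eps}\leq \|T\|_{1+\eps}$ follows from $\mu_t(P_kTP_k)\leq \mu_t(T)$.

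Multiplying by $\eps = s-1$ and writing $u := \eps\|T\|_{1+\eps}$, $v := \eps\|D\|_{1+\eps}$, the bound becomes $2\,\eps^{-\eps}(uv)^{(1+\eps)/2}$. Theorem \ref{great} gives $\limsup u \leq C\|T\|_{\Z_1}$ and $\limsup v \leq C\|D\|_{\Z_1}\leq C'n^{-\delta}$, while $\eps^{-\eps}\to 1$, so
\[
\limsup_{s\searrow 1}(s-1)\|X^s-Y^s\|_1 \leq 2C\bigl(\|T\|_{\Z_1}\|D\|_{\Z_1}\bigr)^{1/2} \leq C''\,n^{-\delta/2},
\]
yielding both $(i)$ and $(ii)$. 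The principal subtlety is the choice of the exponent $(1+\eps)/2$ in the split: naive first-order splittings such as $X^{1+\eps}-Y^{1+\eps} = DY^\eps + X(X^\eps-Y^\eps)$, or the three-term split via $X^{1/2}$, leave a residual factor $\|D\|_{\Z_1}^\eps\to 1$ that erases the $n$-smallness; only the symmetric half-power split distributes the square root evenly between $T$ and $D$, producing the decisive $n^{-\delta/2}$.
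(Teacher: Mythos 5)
Your proof is correct and follows essentially the same route as the paper: the symmetric half-power splitting of $X^s-Y^s$, Cauchy--Schwarz in $\L^2$ combined with the BKS inequality, and the smallness $\|a^{\delta}(P_{n+1}-P_n)\|\leq n^{-\delta}$ of the perturbation. The only (cosmetic) difference is that you control the difference $D$ in the Marcinkiewicz norm and pass to $(s-1)\|D\|_s$ via Theorem \ref{great}, whereas the paper bounds $\|A_n-B_n\|_s$ directly by $2\,\|a^{\delta}(P_{n+1}-P_n)\|\,\|a^{1-\delta}Ga\|_s$ and uses the boundedness of $(s-1)\|a^{1-\delta}Ga^{1-\delta}\|_s^s$.
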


\begin{proof}
To prove $i)$, let $A_n:=a_{n+1}\,G\,a_{n+1}$ and $ B_n:=a_n\,G\,a_n$.
Then 
\begin{align*}
\|A_n^s-B_n^s\|_1=\|A_n^{s/2}(A_n^{s/2}-B_n^{s/2})-(A_n^{s/2}-B_n^{s/2})B_n^{s/2}\|_1
&\leq \big(\|A_n^{s/2}\|_2+\|B_n^{s/2}\|_2\Big)\|A_n^{s/2}-B_n^{s/2}\|_2\\
&\leq \big(\|A_n\|_s^{s/2}+\|B_n\|_s^{s/2}\Big)\|A_n-B_n\|_s^{s/2}\,,
\end{align*}
by the BKS inequality since $0<s/2<1$. Then
we use $\|A_n\|_s\leq \|aGa\|_s$ and $\|B_n\|_s\leq\|aGa\|_s$, together with
\begin{align*}
\|A_n-B_n\|_s&=\|a_{n+1}Ga(P_{n+1}-P_{n})-(P_n-P_{n+1})aGa_n\|_s
\leq 2\,\|a^{\delta}(P_{n+1}-P_{n})\|\,\|a^{1-\delta}Ga\|_s\,,
\end{align*}
to obtain
$$
\|A_n^s-B_n^s\|_1\leq 2^{s/2+1}\,\|a\|^{3\delta s/2}\,\|a^{\delta}(P_{n+1}-P_n)\|^{s/2}\,\|a^{1-\delta}Ga^{1-\delta}\|_s^s\,.
$$
This concludes the proof since $(s-1)\|a^{1-\delta}Ga^{1-\delta}\|_s^s$ is bounded  and 
$\|a^{\delta}(P_{n+1}-P_n)\|^{s/2}\leq n^{-s\delta/2}$.

To prove $ii)$, one uses the same strategy applied to $A_n=a_n Ga_n$ and $B_n=aGa$.
\end{proof}

The following result is strongly inspired by \cite[Lemmas 3.3-3.5]{CPS2}:

\begin{lemma}
\label{PGP}
Let $P\in\cn$ be a projector and $0\leq a\in B_\zeta$ such that  $[a,P]=0$ and $a\geq m \,P$, for some $m\in (0,1)$. Then 
$$
\lim_{s\searrow 1}(s-1)\big\|a(PGP)^sa-(aP G Pa)^s\big\|_1=0\,.
$$
\end{lemma}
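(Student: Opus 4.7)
The plan is to reduce the problem to a computation in the corner von Neumann algebra $P\cn P$. Since $[a,P]=0$, the element $b:=aP=Pa$ lies in $P\cn P$, and the hypothesis $a\geq mP$ makes $b$ invertible in that corner with $\|b^{-1}\|\leq m^{-1}$. Setting $G':=PGP$, the operator $(PGP)^s$ lies in $P\cn P$ for all $s>0$ (its support projection is dominated by $P$), so
$$
a(PGP)^s a=bG'^s b,\qquad (aPGPa)^s=(bG'b)^s,
$$
as elements of $(P\cn P,\tau|_{P\cn P})$. The task becomes: show that $(s-1)\|bG'^s b-(bG'b)^s\|_1\to 0$ as $s\searrow 1$ in this corner.

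The algebraic heart of the argument is a polar identity. Setting $V:=bG'^{1/2}$, one has $VV^*=bG'b$ and $V^*V=G'^{1/2}b^2G'^{1/2}$, and $(VV^*)^s=V(V^*V)^{s-1}V^*$ yields
$$
bG'^s b-(bG'b)^s=bG'^{1/2}\bigl[X^{s-1}-Y^{s-1}\bigr]G'^{1/2}b,\quad X:=G',\ Y:=G'^{1/2}b^2G'^{1/2}.
$$
Crucially, $m^2X\le Y\le\|a\|^2X$, so $X$ and $Y$ are spectrally equivalent positive operators and $X-Y=G'^{1/2}(P-b^2)G'^{1/2}$ is controlled by a scalar multiple of $X$.

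The relevant ideal memberships go as follows. Since $a\in B_\zeta$, operator convexity of $t\mapsto t^s$ on $[1,2]$ (Hansen--Pedersen: $(PGP)^s\le PG^s P$) yields $\tau(a(PGP)^s a)\le\tau(aG^s a)$, so by Proposition \ref{thm:Z_1-landing} applied in the corner, $bG'b\in\Z_1(P\cn P)$. The identity $G'=b^{-1}(bG'b)b^{-1}$ in $P\cn P$ together with the ideal property then forces $G'\in\Z_1(P\cn P)$; moreover $|bG'^{1/2}|^2=G'^{1/2}b^2G'^{1/2}$ has the same spectrum as $bG'b$, putting $bG'^{1/2}\in\Z_2(P\cn P)$.

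The final step extracts decay from $X^{s-1}-Y^{s-1}$ using the integral representation $X^\alpha=\frac{\sin(\alpha\pi)}{\pi}\int_0^\infty\lambda^{\alpha-1}X(X+\lambda)^{-1}\,d\lambda$ with $\alpha=s-1\in(0,1)$, combined with the resolvent identity $X(X+\lambda)^{-1}-Y(Y+\lambda)^{-1}=\lambda(Y+\lambda)^{-1}(X-Y)(X+\lambda)^{-1}$, which factors the bounded operator $P-b^2$ out of the resolvents. Sandwiching by $bG'^{1/2}$ and $G'^{1/2}b$ and applying H\"older in the Marcinkiewicz ideals (with estimates kept uniform in $\lambda$ via the comparability $m^2(X+\lambda)\le Y+\lambda\le\|a\|^2(X+\lambda)$), I expect a bound $\|bG'^s b-(bG'b)^s\|_1\le C(s-1)^\eta$ for some $\eta>0$, the extra power of $(s-1)$ being supplied by the prefactor $\sin((s-1)\pi)/\pi=O(s-1)$. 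The main obstacle is that naive H\"older in the Marcinkiewicz ideals fails to place $bG'^{1/2}[X^{s-1}-Y^{s-1}]G'^{1/2}b$ in $L^1$ with norm decaying faster than $(s-1)^{-1}$; extracting the genuine $L^1$-decay demands exploiting the combined cancellation in $X^{s-1}-Y^{s-1}$ and in the $(s-1)$-prefactor of the integral representation, in the spirit of the CPS2 Lemmas 3.3--3.5 cited in the statement.
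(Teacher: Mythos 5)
Your setup is sound --- the corner reduction, the identity $a(PGP)^sa-(aPGPa)^s=bG'^{1/2}\bigl[X^{s-1}-Y^{s-1}\bigr]G'^{1/2}b$, and the comparability $m^2X\le Y\le\|a\|^2X$ are all correct --- but the proof is not complete: you concede as much yourself (``I expect a bound\dots'', ``The main obstacle is\dots''). The integral-representation/resolvent route you sketch for $X^{s-1}-Y^{s-1}$ does not close, and no amount of H\"older in the Marcinkiewicz ideals will, because $bG'^{1/2}$ lies only in $\Z_2$, not in $\L^2$, so the factors you want to peel off are not square-summable and the required $o\bigl((s-1)^{-1}\bigr)$ decay in trace norm is never actually extracted. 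This is a genuine gap, not a routine verification.

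The missing idea is to use the comparability $m^2X\le Y\le\|a\|^2X$ you already have, together with operator monotonicity of $t\mapsto t^{s-1}$ for $s-1\in(0,1)$ (equivalently, the two parts of \cite[Lemma 3.3]{CPS2}, which is how the paper phrases it), to get the two-sided \emph{operator} sandwich
\begin{equation*}
\bigl(m^{2(s-1)}-1\bigr)\,a(PGP)^sa\;\le\;(aPGPa)^s-a(PGP)^sa\;\le\;\bigl(\|a\|^{2(s-1)}-1\bigr)\,a(PGP)^sa\,.
\end{equation*}
(The lower bound needs the restriction to $P\cH$, where $a$ is invertible; both sides vanish on $(1-P)\cH$.) A sandwich $-bA\le T\le cA$ with $A\ge0$, $b,c\ge 0$ gives $\|T\|_1\le(c+2b)\|A\|_1$, so
\begin{equation*}
\bigl\|a(PGP)^sa-(aPGPa)^s\bigr\|_1\le\Bigl(\bigl(1-m^{2(s-1)}\bigr)+2\bigl(\|a\|^{2(s-1)}-1\bigr)\Bigr)\,\|a(PGP)^sa\|_1\,,
\end{equation*}
and since $a(PGP)^sa\le aPG^sPa$ yields $\|a(PGP)^sa\|_1\le\|aG^sa\|_1$, the quantity $(s-1)\|a(PGP)^sa\|_1$ stays bounded by the hypothesis $a\in B_\zeta$, while the prefactor tends to $0$ as $s\searrow1$. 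This is exactly the paper's argument; your polar identity and the factorisation through $X^{s-1}-Y^{s-1}$ are a detour that the paper reserves for the harder Lemma on the smooth approximants, where no spectral lower bound is available.
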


\begin{proof}
By \cite[Lemma 3.3 i)]{CPS2}, we have
$$
(aP G Pa)^s\leq\|a\|^{2(s-1)}\,a(PGP)^sa\,,
$$
The result follows if we can show that
\begin{equation}
\label{trick}
(aP G Pa)^s\geq m^{2(s-1)}\,a(PGP)^sa\,,
\end{equation}
as we would then have
\begin{equation}
\label{inequality}
\big( m^{2(s-1)}-1\big)a(PGP)^sa\leq(aP G Pa)^s-a(PGP)^sa\leq\big( \|a\|^{2(s-1)}-1\big)a(PGP)^sa\,,
\end{equation}
and this suffices by the following reasoning.
If $\|a\|\leq1$, then
$$
0\leq a(PGP)^sa-(aP G Pa)^s\leq \big( 1-m^{2(s-1)}\big)a(PGP)^sa\,,
$$
and the claim follows. So assume  $\|a\|>1$. Then, 
$$
-\big( \|a\|^{2(s-1)}-1\big)a(PGP)^sa\leq a(PGP)^sa-(aP G Pa)^s\leq \big( 1-m^{2(s-1)}\big)a(PGP)^sa\,.
$$
Setting
$$
0\leq b=\big( \|a\|^{2(s-1)}-1\big),\,0\leq c=\big( 1-m^{2(s-1)}\big),\,A=a(PGP)^sa,\, X=a(PGP)^sa-(aP G Pa)^s,
$$
we have $0\leq X+bA\leq (c+b) A$, and thus
$$
\|X\|_1\leq\|X+bA\|_1+b\|A\|_1\leq (c+2b)\|A\|_1\,,
$$
that is 
\begin{align*}
\|a(PGP)^sa-(aP G Pa)^s\big\|_1&\leq \Big(\big( 1-m^{2(s-1)}\big)+2\big( \|a\|^{2(s-1)}-1\big)\Big)\|a(PGP)^sa\|_1\,,
\end{align*}
which gives the result since 
$$
\|a(PGP)^sa\|_1\leq \|aPG^sPa\|_1=
\|PaG^saP\|_1\leq \|aG^sa\|_1\,,
$$
where we have used the operator inequality $a(PGP)^sa\leq aPG^sPa$,  from \cite[Lemma 3.3 i)]{CPS2}.

To prove \eqref{trick}, decompose $\cH$
as $P\cH\oplus (1-P)\cH$. Since $[P,a]=0$, we know that 
$$
(aPGPa)^s=P(aPGPa)^sP,\quad \mbox{and}\quad
 a(P GP)^sa=Pa(P GP)^saP,
$$
and so their restrictions to $(P\cH)^\perp_n$ are zero and so \eqref{trick} holds on 
$(1-P)\cH$. Since $a\geq m\, P$, its restriction to $P\cH$ is an invertible element of $P\cn P$  
and \cite[Lemma 3.3 ii)]{CPS2} gives the result.
\end{proof}

Next we prove some results involving both the projectors $P_n$ and their smooth versions
$\vf_n(a)$.

\begin{lemma}
\label{trivial}
Let $0\leq a\in\cn$ be such that there exists $\delta>0$ with $a^{1-\delta}\in B_\zeta$. Then 
there exists $C>0$, uniform in $n$, such that
$$
\limsup_{s\searrow 1}(s-1)\big\|a(P_nGP_n)^sa-a(P_{n+1}GP_{n+1})^sa\big\|_1
\leq C\,n^{-\delta/2}\,.
$$
\end{lemma}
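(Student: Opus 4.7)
The plan is to interpolate between $a(P_nGP_n)^s a$ and $a(P_{n+1}GP_{n+1})^s a$ through the two ``symmetric'' operators $(a_nGa_n)^s$ and $(a_{n+1}Ga_{n+1})^s$, in which the power $s$ sits \emph{outside} the sandwich by $a$. Using that $[a,P_n]=0$ (as $P_n$ is a spectral projection of $a$), we have $a_n = aP_n = P_na$, so $a_nGa_n = aP_nGP_na$, and the triangle inequality gives
\begin{align*}
\bigl\|a(P_nGP_n)^sa - a(P_{n+1}GP_{n+1})^sa\bigr\|_1
 &\leq \bigl\|a(P_nGP_n)^sa - (a_nGa_n)^s\bigr\|_1 \\
 &\quad + \bigl\|(a_nGa_n)^s - (a_{n+1}Ga_{n+1})^s\bigr\|_1 \\
 &\quad + \bigl\|(a_{n+1}Ga_{n+1})^s - a(P_{n+1}GP_{n+1})^sa\bigr\|_1 .
\end{align*}
We now bound $(s-1)$ times each of the three terms and take $\limsup_{s \searrow 1}$.

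For the first and third terms we apply Lemma \ref{PGP}. The hypotheses hold with $P=P_n$ and $m = 1/n$ (respectively $P = P_{n+1}$, $m = 1/(n+1)$): by spectral calculus $a \geq (1/n)P_n$ and $[a,P_n]=0$, and the missing ingredient $a \in B_\zeta$ follows from the assumption $a^{1-\delta}\in B_\zeta$ together with the operator inequality $a^2 \leq \|a\|^{2\delta}\,a^{2(1-\delta)}$, which yields $\tau(aG^sa) \leq \|a\|^{2\delta}\tau(a^{1-\delta}G^sa^{1-\delta})$ and hence $\|a\|_\zeta \leq \|a\|^\delta \|a^{1-\delta}\|_\zeta < \infty$. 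Lemma \ref{PGP} then gives
$$
\lim_{s\searrow 1}(s-1)\bigl\|a(P_nGP_n)^sa-(a_nGa_n)^s\bigr\|_1 = 0,
$$
and likewise for the third term. The rate of vanishing depends on $n$, but this is harmless because we only need the $\limsup$ as $s\searrow 1$ for each fixed $n$.

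The middle term is handled directly by Lemma \ref{diff5}$\,i)$, applied to the pair $(a_n,a_{n+1})$, which provides the uniform constant $C_1>0$ with
$$
\limsup_{s\searrow 1}(s-1)\bigl\|(a_nGa_n)^s-(a_{n+1}Ga_{n+1})^s\bigr\|_1 \leq C_1\, n^{-\delta/2}.
$$
Summing the three contributions proves the lemma with $C=C_1$. The whole substantive work (the BKS-based estimate controlling the difference as $n$ varies) is encoded in Lemma \ref{diff5}$\,i)$; this lemma is labelled \emph{trivial} precisely because it only needs to repackage that estimate together with the smoothing-by-$a$ afforded by Lemma \ref{PGP}, and there is no further obstacle to overcome.
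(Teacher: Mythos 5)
Your proof is correct and follows exactly the paper's argument: the same three-term telescoping through $(a_nGa_n)^s$ and $(a_{n+1}Ga_{n+1})^s$, with the outer terms killed by Lemma \ref{PGP} (for each fixed $n$) and the middle term bounded by $C_1 n^{-\delta/2}$ via Lemma \ref{diff5}\,$i)$. The extra details you supply (that $a^{1-\delta}\in B_\zeta$ implies $a\in B_\zeta$, and the verification of the hypotheses $a\geq \tfrac1n P_n$, $[a,P_n]=0$ of Lemma \ref{PGP}) are accurate and merely make explicit what the paper leaves implicit.
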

\begin{proof}
Write
\begin{align*}
a(P_nGP_n)^sa-a(P_{n+1}GP_{n+1})^sa=\big(a(P_nGP_n)^sa-&(a_nGa_n)^s\big)+\big((a_nGa_n)^s-(a_{n+1}Ga_{n+1})^s\big)\\
&\qquad+\big((a_{n+1}Ga_{n+1})^s-a(P_{n+1}GP_{n+1})^sa\big)\,,
\end{align*}
and apply Lemma \ref{diff5} and Lemma \ref{PGP}.
\end{proof}

\begin{lemma}
\label{diff6}
Let $0\leq a\in \cn$ be such that there exists $\delta>0$ with  $a^{1-\delta}\in B_\zeta$. 
Then there exists $C>0$, uniform in $n$, such that
$$
\limsup_{s\searrow 1}\,(s-1)\,\big\|
a\big(\vf_n(a)G\vf_n(a)\big)^sa-a\big(P_nGP_n\big)^sa\big\|_1\leq C\, n^{-\delta/2}\,.
$$
\end{lemma}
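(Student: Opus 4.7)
The plan is to decompose the difference through two intermediates, $(a_n^\vf G a_n^\vf)^s$ and $(a_n G a_n)^s$:
\begin{align*}
a(\vf_n(a) G \vf_n(a))^s a - a(P_n G P_n)^s a
&= \bigl[a(\vf_n(a) G \vf_n(a))^s a - (a_n^\vf G a_n^\vf)^s\bigr] \\
&\quad + \bigl[(a_n^\vf G a_n^\vf)^s - (a_n G a_n)^s\bigr] \\
&\quad + \bigl[(a_n G a_n)^s - a(P_n G P_n)^s a\bigr],
\end{align*}
and then to estimate each bracket in the trace-norm after multiplying by $(s-1)$ and letting $s\searrow 1$.

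For the third bracket I will invoke Lemma~\ref{PGP} directly, with $P=P_n$ and the lower bound $a\geq n^{-1}P_n$, obtaining $\lim_{s\searrow 1}(s-1)\|\cdot\|_1=0$. For the first bracket I set $Y:=\vf_n(a)G\vf_n(a)$ and observe that $Y=P_{n+1}YP_{n+1}$ (because $\vf_n(a)P_{n+1}=\vf_n(a)$), while $a\geq (n+1)^{-1}P_{n+1}$; the proof of Lemma~\ref{PGP} then applies verbatim with $Y$ in the role of $PGP$, provided I check that $(s-1)\|aY^s a\|_1$ remains bounded. This follows from $(\vf_n(a) G\vf_n(a))^s\leq \vf_n(a) G^s\vf_n(a)$ (by \cite[Lemma 3.3 i)]{CPS2}, since $\|\vf_n(a)\|\leq 1$) combined with $(s-1)\tau(aG^s a)\leq \|a\|^{2\delta}\|a^{1-\delta}\|_\zeta^2$, an immediate consequence of $a^{1-\delta}\in B_\zeta$. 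Both these brackets will therefore contribute $0$ to the limsup and trivially satisfy the required $O(n^{-\delta/2})$ bound.

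The middle bracket carries the actual rate $n^{-\delta/2}$ and is the main part of the argument; I will mirror the proof of Lemma~\ref{diff5}(i), with $a_{n+1}$ replaced by $a_n^\vf$. Writing $R_n:=\vf_n(a)-P_n$ and using $[a,\vf_n(a)]=0$, the identity
\begin{equation*}
a_n^\vf G a_n^\vf - a_n G a_n = R_n(aGa)\vf_n(a) + P_n(aGa)R_n
\end{equation*}
(which follows from $\vf_n(a)=P_n+R_n$) together with $\|\vf_n(a)\|,\|P_n\|\leq 1$ gives $\|a_n^\vf G a_n^\vf - a_n G a_n\|_s\leq 2\|(aGa)R_n\|_s$. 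I then extract the rate using the key bound $\|a^\delta R_n\|\leq n^{-\delta}$ (valid because $R_n$ is supported in $\{0\leq a\leq 1/n\}$) and the factorization $(aGa)R_n=(aGa^{1-\delta})(a^\delta R_n)$, which yields
\begin{equation*}
\|(aGa)R_n\|_s \leq n^{-\delta}\,\|a\|^\delta\,\|a^{1-\delta} G a^{1-\delta}\|_s.
\end{equation*}
The BKS inequality $\|X^{s/2}-Y^{s/2}\|_2\leq \|X-Y\|_s^{s/2}$ \cite{BKS}, applied exactly as in Lemma~\ref{diff5}, then delivers
\begin{equation*}
\|(a_n^\vf G a_n^\vf)^s - (a_n G a_n)^s\|_1 \leq 2\,\|aGa\|_s^{s/2}\bigl(2n^{-\delta}\|a\|^\delta\|a^{1-\delta}Ga^{1-\delta}\|_s\bigr)^{s/2}.
\end{equation*}
Since both $(s-1)\|aGa\|_s^s$ and $(s-1)\|a^{1-\delta}Ga^{1-\delta}\|_s^s$ stay bounded as $s\searrow 1$ (by Theorem~\ref{thm:Z_1-landing} applied to $a$ and to $a^{1-\delta}$, both of which lie in $B_\zeta$), multiplying by $(s-1)$ will produce the $n$-uniform bound $C\,n^{-\delta/2}$.

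The hard part is the delicate trade-off between the $n^{-\delta}$-decay coming from $R_n$ and the $(s-1)^{-1/s}$-blow-up of the Schatten norms $\|aGa\|_s$ and $\|a^{1-\delta}Ga^{1-\delta}\|_s$: the BKS exponent $s/2$ is exactly what converts $n^{-\delta}$ into $n^{-\delta/2}$ while leaving only an $(s-1)^{-1}$-singularity that is absorbed by the overall $(s-1)$-prefactor.
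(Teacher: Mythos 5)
Your proof is correct and follows essentially the same route as the paper: after applying \eqref{support}, the paper's own decomposition collapses to exactly your three brackets, and the two outer ones are disposed of in the same way, by running the proof of Lemma \ref{PGP} with $G$ replaced by $\vf_n(a)G\vf_n(a)$ (using $P_{n+1}$, respectively $P_n$, as the supporting projection). The only difference is that for the middle bracket $(a^\vf_nGa^\vf_n)^s-(a_nGa_n)^s$ the paper invokes Lemma \ref{trivial} with the replacement $a\mapsto a^\vf_n$, whereas you carry out the underlying BKS estimate of Lemma \ref{diff5} directly; your bounds $\|a^{\delta}R_n\|\leq n^{-\delta}$ and the boundedness of $(s-1)\|a^{1-\delta}Ga^{1-\delta}\|_s^{s}$ are precisely the ingredients used there, so this is a more explicit rendering of the same argument rather than a new one.
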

\begin{proof}
By equation \eqref{support}, we have $\vf_n(a)=P_{n+1}\vf_n(a)$, while $P_n=\vf_n(a)P_n$.
Thus,
\begin{align*}
a\big(\vf_n(a)G\vf_n(a)\big)^sa-a\big(P_nGP_n\big)^sa&=
a\big(P_{n+1}\vf_n(a)G\vf_n(a)P_{n+1}\big)^sa-a\big(P_n\vf_n(a)G\vf_n(a)P_n\big)^sa\\
=&\Big[a\big(P_{n+1}\vf_n(a)G\vf_n(a)P_{n+1}\big)^sa-\big(aP_{n+1}\vf_n(a)G\vf_n(a)P_{n+1}a\big)^s\Big]\\
&\,\,+\Big[\big(aP_{n+1}\vf_n(a)G\vf_n(a)P_{n+1}a\big)^s-
\big(aP_n\vf_n(a)G\vf_n(a)P_na\big)^s\Big]\\
&\,\,+
\Big[\big(aP_n\vf_n(a)G\vf_n(a)P_na\big)^s-a\big(P_n\vf_n(a)G\vf_n(a)P_n\big)^sa\Big]\,.
\end{align*}
For the first term in parentheses, we can apply Lemma \ref{PGP}, with the  
modification that we replace $G$ there by $ \vf_n(a)G\vf_n(a)$, to obtain a 
vanishing contribution. Indeed, following line by line the proof of Lemma 
\ref{PGP} with the indicated modification, we get the operator inequalities
\begin{align*}
&\big( m^{2(s-1)}-1\big)a(P_{n+1}\vf_n(a)G\vf_n(a)P_{n+1})^sa\\
&\hspace{4cm}\leq(aP_{n+1}\vf_n(a)G\vf_n(a)P_{n+1}a)^s-
a(P_{n+1}\vf_n(a)G\vf_n(a)P_{n+1})^sa\\
&\hspace{8cm}\leq\big( \|a\|^{2(s-1)}-1\big)a(P_{n+1}\vf_n(a)G\vf_n(a)P_{n+1})^sa\,.
\end{align*}
Combining these operator inequalities  with $a(P_{n+1}\vf_n(a)G\vf_n(a)P_{n+1})^sa\leq aG^sa$, we obtain
$$
\lim_{s\searrow 1}(s-1)\|a\big(P_{n+1}\vf_n(a)G\vf_n(a)P_{n+1}\big)^sa-
\big(aP_{n+1}\vf_n(a)G\vf_n(a)P_{n+1}a\big)^s\|_1=0\,.
$$
Replacing $P_{n+1}$ by $ P_n$ gives the same conclusion for the last term in parentheses.

For the middle term, we can apply Lemma \ref{trivial} with the replacement 
$a\mapsto a^\vf_n$, to obtain the desired trace-norm bound. Indeed, 
since ${a_n^\vf}^2=a^2\vf_n^2(a)\leq a^2$, we infer from Lemma \ref{Bp-grew} 
ii) that $a^\vf_n\in B_\zeta$ and since  
$(a^\vf_n)^{1-\delta}=a^{1-\delta}\vf_n(a)^{1-\delta}\leq a^{1-\delta}$, 
we see that $(a^\vf_n)^{1-\delta}$ belongs to $B_\zeta$ too.
\end{proof}

To control the trace-norm of $a^\vf_nG^sa_n^\vf-a(\vf_n(a)G\vf_n(a))^sa$, we need  the following identity.

\begin{lemma}
\label{adam}
Let $0\leq A,B\in\cn$ with $ B$ injective. Then for all $1\leq s\leq 2$ we have the equality
$$
\big(ABA)^s=AB^{1/2}\big(B^{1/2}A^2B^{1/2}\big)^{s-1}B^{1/2} A\,.
$$
\end{lemma}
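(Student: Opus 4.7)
The strategy is to recognize the right-hand side as arising from the standard polar-type intertwining identity $Xf(X^*X) = f(XX^*)X$, applied with a carefully chosen $X$.

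First I would set $X := AB^{1/2} \in \cn$, so that $X^* = B^{1/2}A$. A direct computation then gives
$$XX^* = AB^{1/2}B^{1/2}A = ABA, \qquad X^*X = B^{1/2}A \cdot AB^{1/2} = B^{1/2}A^2B^{1/2},$$
both of which are positive elements of $\cn$ with the same spectrum outside $\{0\}$.

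Next I would invoke the well-known intertwining identity: for any bounded operator $X \in \cn$ and any continuous function $f$ on $[0,\|X\|^2]$,
$$X\,f(X^*X) = f(XX^*)\,X.$$
This is proved first for monomials $f(t)=t^n$ by the trivial induction $X(X^*X)^n = (XX^*)^n X$, and then extended to continuous $f$ via Stone-Weierstrass approximation on $\sigma(X^*X)\cup\sigma(XX^*)$, combined with the continuity of the continuous functional calculus. Since $s-1 \in [0,1]$, the function $f(t)=t^{s-1}$ is continuous on $[0,\infty)$, so we obtain
$$AB^{1/2}\bigl(B^{1/2}A^2B^{1/2}\bigr)^{s-1} = (ABA)^{s-1}\,AB^{1/2}.$$

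Finally I would multiply both sides on the right by $B^{1/2}A = X^*$ to get
$$AB^{1/2}\bigl(B^{1/2}A^2B^{1/2}\bigr)^{s-1}B^{1/2}A = (ABA)^{s-1}\,AB^{1/2}\cdot B^{1/2}A = (ABA)^{s-1}(ABA) = (ABA)^s,$$
which is the claimed identity. There is no genuine obstacle here; the assumption $B$ injective is not actually needed for the algebraic identity (the intertwining identity is unconditional), and is presumably listed only because the lemma is to be applied in the ambient framework where $G$ is the injective operator from the preceding Banach algebra setup. The constraint $1\le s\le 2$ is similarly only required so that the exponent $s-1$ lies in $[0,1]$, ensuring the functional calculus is applied to a continuous function on the positive half-line.
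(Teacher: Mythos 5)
Your proof is correct, and it reaches the identity by a cleaner route than the paper does. You set $X=AB^{1/2}$ and invoke the intertwining relation $Xf(X^*X)=f(XX^*)X$, proved for monomials and extended to continuous $f$ by Stone--Weierstrass; applying it to $f(t)=t^{s-1}$ and multiplying by $X^*$ on the right gives the claim in three lines. The paper instead takes the polar decomposition $B^{1/2}A=v|B^{1/2}A|$, carefully identifies the initial and final projections $v^*v$ and $vv^*$ with $\supp(A)$ and the support of $B^{1/2}A^2B^{1/2}$ (this is where injectivity of $B$ is used, to get $\ker(B^{1/2}A)=\ker A$), establishes $ABA=v^*B^{1/2}A^2B^{1/2}v$, and then pushes $v$ and $v^*$ through the fractional power via the integral representation $T^{s-1}=C_{s-1}\int_0^\infty\lambda^{-s+1}T(1+\lambda T)^{-1}\,d\lambda$. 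The two arguments are incarnations of the same fact --- that $X^*X$ and $XX^*$ are unitarily equivalent off their kernels --- but yours bypasses the kernel/range bookkeeping and the integral formula entirely. Your side remarks are also accurate: the injectivity of $B$ and the upper bound $s\le 2$ are not needed for the algebraic identity (indeed $t^{s-1}$ is continuous on $[0,\infty)$ for every $s\ge 1$); they reflect only the context in which the lemma is applied. The one convention to be aware of is the case $s=1$, where $(B^{1/2}A^2B^{1/2})^{0}$ could mean either the identity or the support projection; both readings make the identity true, the latter because that support projection fixes $B^{1/2}A$ on the left.
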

\begin{proof}
Let $B^{1/2}A=v\,|B^{1/2}A|$ so that $AB^{1/2}=|B^{1/2}A|\,v^*$,
where the partial isometry $v$ satisfies 
$$v:(\ker(B^{1/2}A))^\perp\to 
\overline{\mbox{range}(B^{1/2}A)}.$$
Moreover,
$$
{\rm support }(v^*v)=(\ker(B^{1/2}A))^\perp,\quad {\rm support }(vv^*)= \overline{\mbox{range}(B^{1/2}A)}\,.
$$
 Since $B$ is injective, we have $\ker(B^{1/2}A)=\ker(A)$,
and thus $v^*v={\rm support }(A)$.
Next, we remark that 
$$
\overline{\mbox{range}(B^{1/2}A)}^\perp=\ker(AB^{1/2})=\ker(|AB^{1/2}|)=\ker(B^{1/2}A^2B^{1/2})\,,
$$
and thus $vv^*$ is actually the projection onto the orthogonal complement of the kernel of 
$B^{1/2}A^2B^{1/2}$, so we immediately conclude that
\begin{align}
\label{bel}
vv^*B^{1/2}A^2B^{1/2}=B^{1/2}A^2B^{1/2}vv^*=B^{1/2}A^2B^{1/2}\,.
\end{align}
Observing that
\begin{align*}
AB^{1/2}=v^*vAB^{1/2}&=v^*v|B^{1/2}A|v^*=v^*B^{1/2}Av^*\quad {\rm and}\\
&\hspace{5cm}B^{1/2}A=B^{1/2}Av^*v=v|B^{1/2}A|v^*v=vAB^{1/2}v\,,
\end{align*}
we find
$$
ABA=v^*B^{1/2}Av^*vAB^{1/2}v=v^*B^{1/2}A^2B^{1/2}v.
$$
Thus restricting $ABA$ to $v^*v\cH$ (it is zero on $(1-v^*v)\cH$), we obtain
\begin{align*}
(ABA)^{s-1}&=\left(v^*B^{1/2}A^2B^{1/2}v\right)^{s-1}\\
&=C_{s-1}\int_0^\infty \lambda^{-s+1} v^*B^{1/2}A^2B^{1/2}v
(v^*v+\lambda v^* B^{1/2}A^2B^{1/2}v)^{-1}d\lambda\\
&=C_{s-1}\int_0^\infty \lambda^{-s+1}v^*B^{1/2}A^2B^{1/2}(vv^*+\lambda B^{1/2}A^2B^{1/2})^{-1}
d\lambda v\\
&=v^*\left(B^{1/2}A^2B^{1/2}\right)^{s-1}v\,,
\end{align*}
where we used that $vv^*$ is the support projection of $B^{1/2}A^2B^{1/2}$. We conclude by 
\begin{align*}
(ABA)^s&=|B^{1/2}A|\,(ABA)^{s-1}|B^{1/2}A|\\&
=|B^{1/2}A|v^*\,\left(B^{1/2}A^2B^{1/2}\right)^{s-1}\,v|B^{1/2}A|=AB^{1/2}\left(B^{1/2}A^2B^{1/2}\right)^{s-1}B^{1/2}A\,.\hspace{1,5cm}\qed
\end{align*}
\hideqed
\end{proof}

\begin{lemma}
\label{diff3}
Let $0\leq a\in \cn$ and suppose that there exists $\delta>0$ with  
$a^{1-\delta}\in B_\zeta$ and $[G,a^{1-\delta}]\in\Z_1^0$. 
Then there exists an absolute constant $C>0$ such that
$$
\limsup_{s\searrow 1}\,(s-1)\,\big\|a_n^\vf G^sa_n^\vf-a(\varphi_n(a)G\varphi_n(a))^sa\big\|_1\leq C \,n^{-\delta/2}\,.
$$
\end{lemma}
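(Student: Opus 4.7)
The first move is to apply Lemma~\ref{adam} with $A=\vf_n(a)$ and $B=G$ (the standing assumption $G^{-1}\ge 1$ makes $G$ positive and injective). This rewrites
\[a(\vf_n(a)G\vf_n(a))^s a \;=\; a\vf_n(a)G^{1/2}Z^{s-1}G^{1/2}\vf_n(a)a, \qquad Z := G^{1/2}\vf_n(a)^2G^{1/2},\]
so that, together with $G^s = G^{1/2}G^{s-1}G^{1/2}$, the difference to be estimated factorizes as
\[F(s) := a_n^\vf G^s a_n^\vf - a(\vf_n(a)G\vf_n(a))^s a \;=\; a\vf_n(a)G^{1/2}\bigl[G^{s-1} - Z^{s-1}\bigr]G^{1/2}\vf_n(a)a.\]
For the middle bracket I would use the Laplace-type integral representation, valid for $h = s-1\in(0,1)$,
\[G^{h} - Z^{h} \;=\; \frac{\sin(\pi h)}{\pi}\int_0^\infty \lambda^{h}(G+\lambda)^{-1}(G-Z)(Z+\lambda)^{-1}\,d\lambda,\]
together with the factorization $G - Z = G^{1/2}(1-\vf_n(a)^2)G^{1/2}$ and the intertwining identity
\[(Z+\lambda)^{-1}G^{1/2}\vf_n(a) \;=\; G^{1/2}\vf_n(a)(M+\lambda)^{-1},\qquad M := \vf_n(a)G\vf_n(a),\]
which is a consequence of the $T^*T$-$TT^*$-relation applied to $T=G^{1/2}\vf_n(a)$.

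The smallness in $n$ is supplied by equation \eqref{bound-difference}, which gives $\|a^\delta(1-\vf_n(a)^2)\|\le n^{-\delta}$. To bring this localizer into play, I would split each outer factor $a$ as $a^\delta\cdot a^{1-\delta}$ and slide the $a^\delta$'s past $G^{1/2}$ so that they meet $1-\vf_n(a)^2$; the resulting commutators of $a^\delta$ (equivalently, of $a^{1-\delta}$) with $G^{1/2}$ lie in an appropriate $\Z_p^0$ ideal by Lemma~\ref{commutators-powers} applied to the hypothesis $[G,a^{1-\delta}]\in\Z_1^0$, and their contribution to $(s-1)\|F(s)\|_1$ vanishes as $s\searrow 1$ by Lemma~\ref{limits}~i). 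The surviving principal term is then estimated via H\"older's inequality with exponents tuned to $\eps = s-1$: the outer $\L^{2(1+\eps)}$-norms of $a^{1-\delta}G^{1/2}$ are controlled by combining Proposition~\ref{complex-interpol} (which places $a^{1-\delta}G^{1/2}$ in $\Z_2$) with Lemma~\ref{limits}~ii), while the middle slot is handled by a BKS-type factorization of the fractional-power difference in the spirit of the estimate $\|X^s - Y^s\|_1 \le (\|X\|_s^{s/2}+\|Y\|_s^{s/2})\|X - Y\|_s^{s/2}$ that was employed in Lemma~\ref{diff5}. This factorization splits the localizer $\|a^\delta(1-\vf_n(a)^2)\|\le n^{-\delta}$ into two halves and accounts for the announced square-root exponent, producing the upper bound $Cn^{-\delta/2}$ with $C$ uniform in $s$.

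The main obstacle is the uniform-in-$s$ bookkeeping of the $\lambda$-integral: both endpoints $\lambda\to 0^+$ and $\lambda\to\infty$ generate integrable but delicate singularities that must be tracked through the resolvent manipulations, and the $\sin(\pi(s-1))/\pi\sim s-1$ prefactor must absorb the $(s-1)^{-1/2}$ divergences of the outer Schatten norms on both sides. A secondary subtlety is ensuring that the commutator corrections are simultaneously bounded uniformly in $n$ (via the $\Z_1^0$-norm of $[G,a^{1-\delta}]$) and negligible as $s\to 1^+$ (via Lemma~\ref{limits}~i), so that they disappear from the $\limsup$ without contaminating the $n$-dependence of the constant.
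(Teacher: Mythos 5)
Your opening moves coincide with the paper's: Lemma \ref{adam} with $A=\vf_n(a)$, $B=G$ reduces the difference to $a_n^\vf G^{1/2}\bigl(G^{s-1}-Z^{s-1}\bigr)G^{1/2}a_n^\vf$ with $Z=G^{1/2}\vf_n^2(a)G^{1/2}$, the fractional-power difference is expanded by the same resolvent integral, and $G-Z=G^{1/2}(1-\vf_n^2(a))G^{1/2}$ supplies the localizer. The commutator corrections landing in $\Z_1^0$ (hence contributing $o(1)$ to $(s-1)\|\cdot\|_1$ for each fixed $n$) and the H\"older estimates with $\eps$-tuned exponents controlled by Proposition \ref{complex-interpol} and Lemma \ref{limits} are also as in the paper, though the paper commutes $1-\vf_n^2(a)$ leftward past $G(1+\lambda G)^{-1}$, producing $[G,\vf_n^2(a)]$ which is controlled by Lemma \ref{not-uniform} together with $[G,a]\in\Z_1^0$, rather than sliding $a^\delta$ inward past $G^{1/2}$ as you propose.

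The genuine gap is in the step that is supposed to produce $n^{-\delta/2}$. You invoke the BKS-type factorization $\|X^s-Y^s\|_1\le(\|X\|_s^{s/2}+\|Y\|_s^{s/2})\|X-Y\|_s^{s/2}$ to ``split the localizer into two halves'', but that mechanism belongs to the direct approach of Lemma \ref{diff5} and does not mesh with the resolvent expansion you have already committed to: after that expansion the factor $1-\vf_n^2(a)$ appears \emph{linearly} in the integrand, so there is no square root left to take, and a BKS bound applied to the middle difference $G^{\eps}-Z^{\eps}$ only sees $\|G-Z\|^{\eps}\le 1$ --- there is no factor of $a$ adjacent to $1-\vf_n^2(a)$ inside $G-Z$, so no negative power of $n$ can be extracted that way. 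The actual source of the exponent $\delta/2$ is elementary bookkeeping in the H\"older chain: one can only afford to place the half power $a^{\delta/2}$ next to the localizer, giving $\|a^{\delta/2}(1-\vf_n^2(a))\|\le n^{-\delta/2}$ directly from \eqref{bound-difference}; the other half $a^{\delta/2}$ must be retained to pair with $G^{\eps/2}$ so that Lemma \ref{limits} ii) (via Proposition \ref{complex-interpol}) keeps $\|a^{\delta/2}G^{\eps/2}\|_{1+2/\eps}$ bounded, while the remaining $a^{1-\delta}$ is needed for the $O(\eps^{-1/2})$ bound on $\|G^{1/2}a^{1-\delta}\|_{2+\eps}$. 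Without this allocation your argument does not close: either the localizer receives no power of $a$ at all, or the remaining H\"older slots cannot be bounded uniformly as $\eps\searrow 0$.
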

\begin{proof}
We remark first   from Lemma \ref{Bp-grew} i), that  $a^{1-\delta}\in B_\zeta$  implies  
 $a\in B_\zeta$ too. Then,
as $\vf_n(a)\leq P_{n+1}\leq (n+1) a_{n+1}$, we readily see that $\vf_n(a) G \vf_n(a)\in \Z_1$ and thus $(\vf_n(a) G \vf_n(a))^s$ is trace-class for all $s>1$, so that 
we are entitled to take the trace norm as in the statement of the lemma.

Lemma \ref{adam} applied to $A=\vf_n(a)$ and $B=G$ (which is injective), gives
\begin{align*}
&a_n^\vf G^sa_n^\vf-a(\varphi_n(a)G\varphi_n(a))^sa=
a_n^\vf G^{1/2}\big(G^{s-1}-(G^{1/2}\varphi_n(a)^2G^{1/2})^{s-1}\big)G^{1/2}a_n^\vf\\
&=a_n^\vf G^{1/2} \frac{\sin(\pi\eps)}\pi\int_0^\infty \lambda^{-\eps}\Big(G(1+\lambda G)^{-1}- {G^{1/2}\varphi_n(a)^2G^{1/2}}(1+\lambda G^{1/2}\varphi_n(a)^2G^{1/2})^{-1}
\Big)d\lambda G^{1/2}a_n^\vf\\
&=a_n^\vf G^{1/2} \frac{\sin(\pi\eps)}\pi\int_0^\infty \lambda^{-\eps}(1+\lambda G)^{-1}G^{1/2}\big(1-\vf_n^2(a)\big)G^{1/2}(1+\lambda G^{1/2}\varphi_n(a)^2G^{1/2})^{-1}
d\lambda G^{1/2}a_n^\vf\,,
\end{align*}
where we have defined $\eps:=s-1$ and  we have used in the third equality, the identity 
$$
A(1+\lambda A)^{-1}-B(1+\lambda B)^{-1}=(1+\lambda A)^{-1}(A-B)(1+\lambda B)^{-1}\,,\quad0\leq A,B\in\cn,\lambda>0\,.
$$
Hence
$$
a_n^\vf G^sa_n^\vf-a(\varphi_n(a)G\varphi_n(a))^sa=\frac{\sin(\pi\eps)}\pi\int_0^\infty X_{\eps,n}(\lambda)\,\lambda^{-\eps}d\lambda\,,
$$
with
$$
X_{\eps,n}(\lambda)=a_n^\vf G(1+\lambda G)^{-1}\big(1-\vf_n^2(a)\big)
G^{1/2}(1+\lambda G^{1/2}\varphi_n(a)^2G^{1/2})^{-1} G^{1/2}a_n^\vf\,.
$$

Commuting $1-\vf_n^2(a)$ with $G(1+\lambda G)^{-1}$ on its left, we obtain
$X_{\eps,n}(\lambda)=Y_{\eps,n}(\lambda)+Z_{\eps,n}(\lambda)$, with
\begin{align*}
Y_{\eps,n}(\lambda)&:=a_n^\vf \big(1-\vf_n^2(a)\big) {G^{1/2}}(G^{-1}+\lambda )^{-1}
(1+\lambda G^{1/2}\varphi_n(a)^2G^{1/2})^{-1}G^{1/2}a_n^\vf\,,\\
Z_{\eps,n}(\lambda)&:=a_n^\vf(1+\lambda G)^{-1}\big[G,\vf_n^2(a)\big] (G^{-1}+\lambda )^{-1}\Big(1-\lambda \varphi_n(a)^2(G^{-1}+\lambda \varphi_n(a)^2)^{-1}\Big) a_n^\vf\,,
\end{align*}
where we have used the relations
$$
[G(1+\lambda G)^{-1},\big(1-\vf_n^2(a)\big)]=-\lambda^{-1}[ (1+\lambda G)^{-1},\vf_n^2(a)]=(1+\lambda G)^{-1}[G,\vf_n^2(a)](1+\lambda G)^{-1}\,,
$$
and
$$
G^{1/2}(1+\lambda G^{1/2}\varphi_n(a)^2G^{1/2})^{-1} G^{1/2}=
(G^{-1}+\lambda \varphi_n(a)^2)^{-1}=G(1-\lambda \varphi_n(a)^2(G^{-1}+\lambda \varphi_n(a)^2)^{-1})\,,
$$
from the identity
$$
(A+B)^{-1}=A^{-1}(1-B(A+B)^{-1})\,,\quad A\geq1\,,B\geq 0\,.
$$

For $\|Y_{\eps,n}(\lambda)\|_1$, we use the H\"older inequality to obtain  the upper bound
$$
\|a^{\delta/2} \big(1-\vf_n^2(a)\big)\|\|a^{1-\delta/2}G^{(1+\eps)/2}\|_{\frac{2+\eps}{1+\eps}}\| G^{-\eps/2}(G^{-1}+\lambda )^{-1}\|\|
(1+\lambda G^{1/2}\varphi_n(a)^2G^{1/2})^{-1}\| \|G^{1/2}a\|_{2+\eps}$$
$$
\quad\leq n^{-\delta/2}\,(1+\lambda)^{-1+\eps/2}\|a^{1-\delta/2}G^{(1+\eps)/2}\|_{\frac{2+\eps}{1+\eps}}
\|G^{1/2}a\|_{2+\eps}\,,
$$
where we have used equation \eqref{bound-difference}  and
$$
G^{-\eps/2}({G^{-1}+\lambda })^{-1}\leq(G^{-1}+\lambda)^{-1+\eps/2} \leq
{(1+\lambda)^{-1+\eps/2} }\,.
$$
Next, from the operator inequality, \cite[Lemma 3.3 i)]{CPS2}, 
$(aGa)^{1+\eps/2}\leq \|a\|^\eps aG^{1+\eps/2}a$,
we obtain
\begin{align}
\label{estim}
\|G^{1/2}a\|_{2+\eps}&=\|aGa\|_{1+\eps/2}^{1/2}\nonumber\\
&\leq\|a\|^{\eps/(2+\eps)}\|aG^{1+\eps/2}a\|_1^{(2+\eps)^{-1}}
\leq \|a\|^{\eps/(2+\eps)}\|a\|_{\zeta}^{2/(2+\eps)}\,(2/\eps)^{(2+\eps)^{-1}}\leq C\,\eps^{-1/2}\,.
\end{align}
To evaluate $\|a^{1-\delta/2}G^{(1+\eps)/2}\|_{(2+\eps)/(1+\eps)}$, we write
$$
a^{1-\delta/2}G^{(1+\eps)/2}=a^{\delta/2}G^{(1+\eps)/2}a^{1-\delta}+a^{\delta/2}\big[a^{1-\delta},G^{(1+\eps)/2}\big]\,.
$$
For the first term, we obtain
$$
\|a^{\delta/2}G^{(1+\eps)/2}a^{1-\delta}\|_{(2+\eps)/(1+\eps)}\leq
\|a^{\delta/2} G^{\eps/2}\|_{1+2/\eps}\|G^{1/2}a^{1-\delta}\|_{2+\eps}\leq C\, \eps^{-1/2}\,,
$$
where we used that $\|a^{\delta/2} G^{\eps/2}\|_{1+2/\eps}$ remains bounded when $\eps\to 0^+$, 
from Lemma \ref{limits} ii), and the estimate
of equation \eqref{estim} for the second part, since $a^{1-\delta}\in B_\zeta$ by assumption.
It remains to treat the commutator term, for which 
Lemma \ref{commutators-powers} gives us
$$
\big\|\big[a^{1-\delta},G^{(1+\eps)/2}\big]\big\|_{(2+\eps)/(1+\eps)}\leq C
\big\|\big[a^{1-\delta},G\big]\big\|_{1+\eps/2}^{(1+\eps)/2}\,.
$$
We conclude using Lemma \ref{limits} i) that $\eps^{1/2}\big\|\big[a^{1-\delta},G^{(1+\eps)/2}\big]\big\|_{(2+\eps)/(1+\eps)}\to 0$ when $\eps\to 0^+$.
Hence, we have shown that
$$
\limsup_{\eps\searrow 0}\,\eps\, \|Y_{\eps,n}(\lambda)\|_1\leq C\, n^{-\delta/2}\,(1+\lambda)^{-1+\eps/2}\,.
$$
It remains to treat $Z_{\eps,n}(\lambda)$ which we estimate in trace-norm as
\begin{align*}
\|Z_{\eps,n}(\lambda)\|_1&\leq \|a_n^\vf ({1+\lambda G})^{-1}[G,\vf_n^2(a)] G({1+\lambda G})^{-1}a_n^\vf\|_1\\
&\hspace{2cm}+\|a_n^\vf (1+\lambda G)^{-1}[G,\vf_n^2(a)]G({1+\lambda G})^{-1}\lambda\vf_n^2(a)({G^{-1}+\lambda \varphi_n(a)^2})^{-1}a_n^\vf\|_1\,.
\end{align*}
We estimate the first term by
\begin{align*}
&\|a\|\,\|[G,\vf_n^2(a)]\|_{1+\eps/2}\| { G^{1-\eps/2}}({1+\lambda G})^{-1}\|\|G^{\eps/2}a\|_{1+2/\eps}\\
&\hspace{7cm}\leq \|a\|\,\|[G,\vf_n^2(a)]\|_{1+\eps/2}\|G^{\eps/2}a\|_{1+2/\eps}(1+\lambda)^{-1+\eps/2}\,.
\end{align*}
For the second term, we obtain the bound
\begin{align*}
\|a\|^2\,\big\|\big[G,\vf_n^2(a)\big]\big\|_{1+\eps/2}\|G^{\eps/2}\vf_n(a)\|_{1+2/\eps}\Big\|\vf_n(a)\frac \lambda{G^{-1}+\lambda \varphi_n(a)^2}\vf_n(a)\Big\|(1+\lambda)^{-1+\eps/2}\,.
\end{align*}
Since $G^{-1}\geq 1$, we have the estimate
$$
(G^{-1}+\lambda \varphi_n(a)^2)^{-1}\leq ({1+\lambda \varphi_n(a)^2})^{-1}\,,
$$
and thus
$$
\|\vf_n(a) \lambda({G^{-1}+\lambda \varphi_n(a)^2})^{-1}\vf_n(a)\|\leq
\| { \lambda\vf_n^2(a) }({1+\lambda \varphi_n(a)^2})^{-1}\|\leq 1\,.
$$
Using $\vf_n(a)\leq n\,a$, we obtain $\|G^{\eps/2}\vf_n(a)\|_{1+2/\eps}\leq n\|G^{\eps/2}a\|_{1+2/\eps}$ and so
\begin{align*}
\|Z_{\eps,\lambda}^n\|_1&\leq C(1+n)\big\|\big[G,\vf_n^2(a)\big]\big\|_{1+\eps/2}\|G^{\eps/2}a\|_{1+2/\eps}(1+\lambda)^{-1+\eps/2}\\
&\leq C(1+n)\|\widehat{{\vf_n^2}'}\|_1\big\|\big[G,a\big]\big\|_{1+\eps/2}\|G^{\eps/2}a\|_{1+2/\eps}(1+\lambda)^{-1+\eps/2}\,.
\end{align*}
We have used Lemma \ref{not-uniform} to obtain the last inequality. We stress that $\|\widehat{{\vf_n^2}'}\|_1$ is not uniform in $n$ since $\vf_n^2$ pointwise-converges to a step function.
However, combining Theorem 3.1 from \cite{PS1} with Theorem 4 from \cite{PS2}, 
and taking into account that $\Z_1^0$ is an interpolation space for the couple $(\L^1,\cn)$, we get from 
$[G,a^{1-\delta}]\in\Z_1^0$ that $[G,a]\in\Z_1^0$ as well.
Thus,  by Lemma \ref{limits} i), ii), we know that $\eps\|[G,a]\|_{1+\eps}\to 0$, while 
$\|G^{\eps}a\|_{1+1/\eps}$ remains bounded when $\eps\to 0^+$.
Putting everything together, we obtain the announced result:
\begin{align*}
&\limsup_{s\searrow 1}\,(s-1)\,\big\|a_n^\vf G^sa_n^\vf-a(\varphi_n(a)G\varphi_n(a))^sa\big\|_1\\
&\hspace{4cm}\leq
\limsup_{\eps\searrow 0}\,\eps\,\frac{\sin(\pi\eps)}\pi\int_0^\infty \big(\|Y_{\eps,n}(\lambda)\|_1+\|Z_{\eps,n}(\lambda)\|_1\big)\,\lambda^{-\eps}d\lambda\leq C n^{-\delta/2}\,.\qed
\end{align*}
\hideqed
\end{proof}
We are now ready to complete the proof of our main technical result.
\begin{proof}[Proof of Proposition \ref{trace-diffrence}]
We write:
\begin{align*}
&aG^sa-(aGa)^s=\Big[aG^sa-a^\vf_nG^sa^\vf_n\Big]+\Big[a^\vf_nG^sa^\vf_n-a\big(\vf_n(a)G\vf_n(a)\big)^sa\Big]\\
&\,+\Big[a\big(\vf_n(a)G\vf_n(a)\big)^sa-a\big(P_nGP_n\big)^sa\Big]
+\Big[a\big(P_nGP_n\big)^sa-\big(a_nGa_n\big)^s\Big]
+\Big[\big(a_nGa_n\big)^s-\big(aGa\big)^s\Big]\,.
\end{align*}
The $\limsup$, $s\to 1^+$ of the trace norm of the first bracket multiplied by $(s-1)$, is bounded  by $n^{-\delta}$ by Lemma \ref{diff1}, the second is bounded by $n^{-\delta/2}$ by Lemma \ref{diff3}, the third is bounded by $n^{-\delta/2}$ by Lemma \ref{diff6}, the fourth is bounded by $0$ by Lemma \ref{PGP} and the fifth by $n^{-\delta/2}$ by Lemma \ref{diff5} ii). This concludes the proof since it implies:
$\limsup_{s\to 1+}\big\|aG^sa-(aGa)^s\big\|_1\leq C\,n^{-\delta/2}$, $\forall n\in\mathbb N$.
\end{proof}

Proposition \ref{trace-diffrence} immediately gives us 

\begin{corollary}\label{3.6}
Assume that $0\leq a\in \cn$ is such that there exists $\delta>0$ with $a^{1-\delta}\in B_\zeta$  and $[G,a^{1-\delta}]\in\Z_1^0$.  Then\\
(i) $ \lim_{s\to 1^+}(s-1)\tau\big(aG^sa\big) $ exists if and only if
$\lim_{s\to 1^+}(s-1)\tau\big((a Ga)^s\big)$ exists and then they are equal;\\
(ii) More generally, for any  Banach  limits 
${\omega}$, we have
$$
\tilde\omega-\lim_{s\searrow 1}(s-1)\tau\big(aG^sa\big) =
\tilde\omega-\lim_{s\searrow 1}(s-1)\tau\big((aGa)^s\big)\,.
$$
\end{corollary}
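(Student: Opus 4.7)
The plan is to derive both parts of the corollary as immediate consequences of Proposition \ref{trace-diffrence}, which supplies the trace-norm bound
\[
\lim_{s\searrow 1}(s-1)\bigl\|aG^sa-(aGa)^s\bigr\|_1 = 0.
\]
First I would check that both numerical functions appearing in the statement make sense on $(1,2]$. Since $a^{1-\delta}\in B_\zeta$ implies $a\in B_\zeta$ by Lemma \ref{Bp-grew}(i), we have $aG^sa\in\LL^1$ for all $s>1$ by definition of $B_\zeta$; and by Proposition \ref{thm:Z_1-landing} we have $aGa\in\Z_1$, so $(aGa)^s\in\LL^1$ for all $s>1$. Hence $s\mapsto (s-1)\tau(aG^sa)$ and $s\mapsto (s-1)\tau((aGa)^s)$ are well-defined bounded real-valued functions on $(1,2]$.

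Next, using the elementary bound $|\tau(T)|\leq\|T\|_1$ for $T\in\LL^1$, applied to $T=aG^sa-(aGa)^s$, we obtain
\[
(s-1)\bigl|\tau(aG^sa)-\tau\bigl((aGa)^s\bigr)\bigr|\;\leq\;(s-1)\bigl\|aG^sa-(aGa)^s\bigr\|_1,
\]
and the right-hand side tends to $0$ as $s\searrow 1$ by Proposition \ref{trace-diffrence}. This is the single nontrivial input; the rest is packaging.

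For part (i), since the numerical difference $(s-1)\tau(aG^sa)-(s-1)\tau((aGa)^s)$ tends to $0$ as $s\searrow 1^+$, the ordinary limit of one side exists if and only if that of the other does, and the two limits then coincide. For part (ii), I would invoke the standard fact that any Banach limit $\omega$ vanishes on $C_0(\mathbb R)$, so the dilation-invariant mean $\tilde\omega=\omega\circ\log$ annihilates any function on $\mathbb R_+^*$ that tends to $0$ at infinity. Changing variables via $s=1+1/r$, the difference of the two functions (as a function of $r$) tends to $0$ at infinity, so $\tilde\omega$ returns the same value when applied to $(s-1)\tau(aG^sa)$ and to $(s-1)\tau((aGa)^s)$.

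The genuine work has already been done in Proposition \ref{trace-diffrence}; the main obstacle, in other words, is behind us. Once that trace-norm asymptotic is available, this corollary is a short deduction in two lines, and no further use of the hypotheses $a^{1-\delta}\in B_\zeta$ or $[G,a^{1-\delta}]\in\Z_1^0$ is required beyond what Proposition \ref{trace-diffrence} already consumed.
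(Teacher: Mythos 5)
Your proposal is correct and is exactly the deduction the paper intends: the paper states that Proposition \ref{trace-diffrence} ``immediately gives'' Corollary \ref{3.6}, and your write-up simply makes explicit the standard steps (the bound $|\tau(T)|\leq\|T\|_1$ for part (i), and the vanishing of Banach limits on functions tending to zero at infinity for part (ii)). No gap and no divergence from the paper's route.
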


\subsection{Dixmier-traces computation}

We have arrived at the following analogue of \cite[Corollary
3.7]{CPS2}.
\begin{prop}\label{3.7}
Let $0\leq a\in\cn$ be such that there exists $\delta>0$ with  $a^{1-\delta}\in B_\zeta$
and $[G,a^{1-\delta}]\in\Z_1^0$. Then, if any one of the following limits exist
they all do and all coincide\\
(1) $\lim_{t\to\infty}\frac{1}{\log(1+t)}\int_0^t\mu_s(aGa) ds$,\\
(2) $\lim_{r\to\infty}\frac{1}{r}\,\tau((aGa)^{1+\frac{1}{r}})$,\\
(3) $\lim_{r\to \infty}\frac{1}{r}\,\zeta(a,a;1+\tfrac1r)$,\\
(4) $\lim_{\lambda\to\infty}\big(Mg(a,a;.)\big)(\lambda)$.\\
Furthermore, the existence of any of the above limits is
equivalent to\\
(5) every generalized limit $\omega$ which is dilation invariant
yields the same value $\tau_\omega(aGa)$
and the latter value coincides with the value of the limits above.
\end{prop}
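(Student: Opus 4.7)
The plan is to assemble Proposition \ref{3.7} from the machinery already developed, pivoting on the fact that $aGa$ lies in the Marcinkiewicz ideal $\Z_1$. First, since $a^{1-\delta}\in B_\zeta$ and the function $x\mapsto x^{\delta/(1-\delta)}$ is bounded on the spectrum of $a^{1-\delta}$, Lemma \ref{Bp-grew}(i) applied with $f(x)=x^{\delta/(1-\delta)}$ yields $a=a^{1-\delta}f(a^{1-\delta})\in B_\zeta$. Hence by Proposition \ref{thm:Z_1-landing}, $aGa\in\Z_1=M_{1,\infty}$, so that each of the limits (1)--(5) makes sense and $\tau_\omega(aGa)$ is well defined for every Dixmier functional $\omega$.

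The three equivalences (2)$\Leftrightarrow$(3), (3)$\Leftrightarrow$(4) and (1)$\Leftrightarrow$(2) I would then establish in turn. For (2)$\Leftrightarrow$(3) apply Corollary \ref{3.6}(i), whose hypotheses are exactly those stated here. For (3)$\Leftrightarrow$(4), invoke the second assertion of Theorem \ref{OL}: once one of the ordinary limits is known to exist, the equality of $\tilde\omega$-residues of $\zeta(a,a;\cdot)$ with the $\omega$-limit of $Mg(a,a;\cdot)$ forces both ordinary limits to exist and to coincide. For (1)$\Leftrightarrow$(2), set $T:=aGa\geq 0$ in $\Z_1$; the implication is the classical weak-$*$ Karamata type Tauberian equivalence for positive operators in $M_{1,\infty}$, between $\frac{1}{\log(1+t)}\sigma_t(T)$ and $\frac{1}{r}\tau(T^{1+1/r})$, which is precisely \cite[Theorem 4.5]{CRSS} combined with Theorem \ref{great} above (and already used, in different dress, in the proof of Theorem \ref{OL}).

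Finally, for the equivalence with (5), the direction (1)$\Rightarrow$(5) is a routine consequence of dilation invariance: if the Cesaro mean $\frac{1}{\log(1+t)}\sigma_t(aGa)$ converges to an ordinary limit $L$, then $L$ is the only possible value for any dilation-invariant generalized limit evaluated on that function, and so $\tau_\omega(aGa)=L$ independently of the choice of $\omega$. The converse (5)$\Rightarrow$(1), which I expect to be the main obstacle, is the deeper ``measurability implies convergence'' statement: constancy of $\tau_\omega(aGa)$ across all dilation-invariant Dixmier functionals forces the ordinary Cesaro limit to exist. This I would handle by directly quoting the characterization of Dixmier-measurability for positive elements of $M_{1,\infty}$ due to Lord--Sedaev--Sukochev \cite{LSS}; all other ingredients are then already in place from Corollary \ref{3.6}, Theorem \ref{OL} and Proposition \ref{thm:Z_1-landing}.
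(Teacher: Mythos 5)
Your proposal is correct and follows essentially the same route as the paper's proof: (2)$\Leftrightarrow$(3) via Corollary \ref{3.6}, (3)$\Leftrightarrow$(4) via Theorem \ref{OL}, a Tauberian argument for (1)$\Leftrightarrow$(2), and the measurability characterization of \cite{LSS} for (5), with the preliminary observation that $a\in B_\zeta$ so that $aGa\in\Z_1$. The one quibble is your citation for (1)$\Leftrightarrow$(2): Theorem \ref{great} and \cite[Theorem 4.5]{CRSS} are norm equivalences and only yield boundedness, whereas the equality and simultaneous existence of the two ordinary limits is the Karamata-type statement the paper takes from \cite[Corollary 3.7]{CPS2} (or \cite[Theorem 2]{BeF}).
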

\begin{proof}
The simultaneous existence and equality of (2) and (3) (resp. (3) and (4))
 follows from Corollary \ref{3.6} (resp. Theorem \ref{OL}). Recall now that the assumption
$a\in B_\zeta$ guarantees $aGa\in \Z_1$.
The assertion \lq\lq  (2) exists if and only if  (1) exists
and then they are equal" is known, it follows e.g.  by the same
argument as in the proof of \cite[Corollary 3.7]{CPS2} or by the
argument given at the beginning of the proof of \cite[Theorem
2]{BeF}. If (1) exists then it equals
$\tau_\omega(a^*Ga)$ by definition. The equivalence of
(4) and the existence of the limit (1) and their equality follows
from the main result of \cite{LSS}.
\end{proof}

We have arrived at the following nonunital analogue of
\cite[Theorem 3.8]{CPS2}, \cite[Theorem 4.11]{CRSS} and
\cite[Corollary 3.3]{LS}.

\begin{theorem}\label{3.8}
Assume that $a\in \cn$ is self adjoint and let $a=a_+-a_-$
be the decomposition into the difference of nonnegative operators. 
Assume that there exists $\delta>0$ with $a_{\pm}^{1/2-\delta}\in B_\zeta$,  
and $[G,a_\pm^{1/2-\delta}]\in\Z_1^0$. Then $aG\in\Z_1$, and moreover,\\
 (i) if $\lim_{s\to 1^+}(s-1)\big(\zeta(a_+^{1/2},a_+^{1/2};s)-\zeta(a_-^{1/2},a_-^{1/2};s)\big)$
exists, then it is equal to $\tau_\omega(aG)$ where we choose
$\omega$ as in
the proof of \cite[Theorem 4.11]{CRSS},\\
(ii) more generally, if we choose functionals $\omega$ and
$\tilde\omega$ as in the proof of \cite[Theorem 4.11]{CRSS}, then
$$
\tau_\omega(aG)=\tilde\omega-\lim_{r\to\infty}
\frac{1}{r}\big(\zeta(a_+^{1/2},a_+^{1/2};1+\tfrac1r)-\zeta(a_-^{1/2},a_-^{1/2};1+\tfrac1r)\big).
$$
\end{theorem}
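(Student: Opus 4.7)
The strategy is to reduce to the positive case by applying Proposition \ref{3.7} to $a_+^{1/2}$ and $a_-^{1/2}$ separately, then to pass from the symmetric products $a_\pm^{1/2} G a_\pm^{1/2}$ to the one-sided products $a_\pm G$ via a commutator identity. Setting $\delta' := 2\delta$, the hypotheses of Proposition \ref{3.7} applied to the positive operator $a_\pm^{1/2}$ are exactly our assumptions, since $(a_\pm^{1/2})^{1-\delta'} = a_\pm^{1/2-\delta} \in B_\zeta$ and $[G,(a_\pm^{1/2})^{1-\delta'}] = [G,a_\pm^{1/2-\delta}] \in \Z_1^0$. Proposition \ref{3.7} then gives $a_\pm^{1/2} G a_\pm^{1/2} \in \Z_1$ along with the Dixmier-trace formula
\[
\tau_\omega(a_\pm^{1/2} G a_\pm^{1/2}) = \tilde\omega-\lim_{r\to\infty} \frac{1}{r}\,\zeta(a_\pm^{1/2}, a_\pm^{1/2}; 1+\tfrac{1}{r})
\]
for any dilation invariant generalised limit $\tilde\omega$ (and similarly for the heat-kernel and partial-trace formulations).

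To obtain $a_\pm G \in \Z_1$, which by Lemma \ref{lem:inequiv} does \emph{not} follow automatically from the symmetric containment, I use the identity
\[
a_\pm G = a_\pm^{1/2} G a_\pm^{1/2} + a_\pm^{1/2}\, [a_\pm^{1/2}, G].
\]
It therefore suffices to show that $[a_\pm^{1/2},G] \in \Z_1^0$, because the bounded factor $a_\pm^{1/2}$ preserves the Banach ideal $\Z_1^0$. This passage from $[G,a_\pm^{1/2-\delta}] \in \Z_1^0$ to $[G,a_\pm^{1/2}] \in \Z_1^0$ is handled by the same argument used at the end of the proof of Lemma \ref{diff3}: combine the power-Lipschitz commutator estimate of Lemma \ref{commutators-powers} (applied with $B=a_\pm^{1/2}$ and exponent $\beta=1-2\delta$) with the operator-Lipschitz results of \cite{PS1,PS2} and the fact that $\Z_1^0$ is an interpolation space for the couple $(\L^1,\cn)$. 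Since Dixmier traces vanish on $\Z_1^0$, this also yields $\tau_\omega(a_\pm G) = \tau_\omega(a_\pm^{1/2}G a_\pm^{1/2})$.

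Assembling the pieces, $aG = a_+ G - a_- G \in \Z_1$, and by linearity of $\tau_\omega$ and $\tilde\omega$,
\[
\tau_\omega(aG) = \tau_\omega(a_+ G) - \tau_\omega(a_- G) = \tilde\omega-\lim_{r\to\infty} \frac{1}{r}\bigl(\zeta(a_+^{1/2}, a_+^{1/2}; 1+\tfrac{1}{r}) - \zeta(a_-^{1/2}, a_-^{1/2}; 1+\tfrac{1}{r})\bigr),
\]
which is (ii). Statement (i) follows from (ii) by the elementary fact that a Banach limit agrees with the ordinary limit on any convergent net. The principal technical obstacle is the commutator upgrade $[G,a_\pm^{1/2-\delta}] \in \Z_1^0 \Rightarrow [G,a_\pm^{1/2}] \in \Z_1^0$: Lemma \ref{commutators-powers} on its own only places the new commutator in the larger ideal $\mathfrak S_{1/(1-2\delta)}$, so one must invoke the finer operator-Lipschitz/interpolation machinery to land back inside the closure $\Z_1^0$ of $\L^1$ in $\Z_1$. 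Everything else is straightforward bookkeeping given Proposition \ref{3.7} and the vanishing of Dixmier traces on $\Z_1^0$.
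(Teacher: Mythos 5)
Your proof is correct and follows essentially the same route as the paper's: the same reduction $a_\pm G = a_\pm^{1/2}Ga_\pm^{1/2} + a_\pm^{1/2}[a_\pm^{1/2},G]$, the same upgrade of the commutator hypothesis from $a_\pm^{1/2-\delta}$ to $a_\pm^{1/2}$ via \cite{PS1,PS2} and interpolation, and the same passage from the symmetric zeta function to the Dixmier trace via Corollary \ref{3.6} together with the machinery of \cite{CPS2,CRSS} (which is exactly what Proposition \ref{3.7} packages). The only quibble is that the identity $\tau_\omega(a_\pm^{1/2}Ga_\pm^{1/2})=\tilde\omega\text{-}\lim_{r\to\infty}\tfrac1r\,\zeta(a_\pm^{1/2},a_\pm^{1/2};1+\tfrac1r)$ is available for the compatible pairs $(\omega,\tilde\omega)$ of \cite[Theorem 4.11]{CRSS}, not for an arbitrary dilation invariant generalised limit as you assert in passing.
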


\begin{proof} 
As observed earlier, recall that $a_{\pm}^{1/2-\delta}\in B_\zeta$ implies 
$a_{\pm}^{1/2}\in B_\zeta$
and $[G,a_\pm^{1/2-\delta}]\in\Z_1^0$ implies $[G,a_\pm^{1/2}]\in\Z_1^0$. 
Thus
$$
aG=a_+G-a_-G=a_+^{1/2}Ga_+^{1/2}-a_-^{1/2}Ga_-^{1/2}
+a_+^{1/2}[a_+^{1/2},G]-a_-^{1/2}[a_-^{1/2},G]\in\Z_1,
$$
with $\tau_\omega(aG)=\tau_\omega(a_+^{1/2}Ga_+^{1/2})
-\tau_\omega(a_-^{1/2}Ga_-^{1/2})$, where 
$\omega$ (and latter 
$\tilde\omega$) has been chosen as in the proof of 
\cite[Theorem 4.11]{CRSS}. We only need to prove part ii); part i) 
will then follow from general facts on Banach limits.
By \cite[Lemma
3.2(i)]{CPS2}, \cite[Theorem 4.11]{CRSS}, Proposition
\ref{3.6} and the remark above, we have
\begin{align*}
\tau_\omega(aG)=\tau_\omega(a_+^{1/2}Ga_+^{1/2})
-\tau_\omega(a_-^{1/2}Ga_-^{1/2})
&=\tilde\omega-\lim_{r\to\infty}\frac{1}{r}\big(\tau((a_+^{1/2}Ga_+^{1/2})^{1+\frac{1}{r}})
-\tau((a_-^{1/2}Ga_-^{1/2})^{1+\frac{1}{r}})\big)\\
&=\tilde\omega-\lim_{r\to\infty}\frac{1}{r}\big(\tau(a_+^{1/2}G^{1+\frac{1}{r}}a_+^{1/2})
-\tau(a_-^{1/2}G^{1+\frac{1}{r}}a_-^{1/2})\big),
\end{align*}
which concludes the proof.
\end{proof}

By considering independently real and imaginary parts of $a\in B_\zeta$, we 
get an analogous result for non-self-adjoint elements. Moreover, we could have 
stated a similar result using the C\'esaro mean of the heat-trace function instead 
of the zeta function. Namely, under the same assumptions as those of 
Theorem \ref{3.8}, it is true that
$$
\tau_\omega(aG)=
\omega-\lim_{\lambda\to\infty}
\big(\big(Mg(a_+^{1/2},a_+^{1/2};.)\big)(\lambda)
-\big(Mg(a_-^{1/2},a_-^{1/2};.)\big)(\lambda)\big).
$$

We close this Section with

\begin{prop} 
\label{prop:dixy-trace}
Let  $a\in B_\zeta$ be such that 
$[G,a]\in\Z_1^0$. Then for any Dixmier trace 
$$
0\leq\tau_\omega(a^*aG)=\tau_\omega(aa^*G).
$$
\end{prop}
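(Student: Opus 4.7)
The plan is to proceed in three steps: verify $\Z_1$-membership via a commutator decomposition, deduce positivity from the symmetrised quadratic forms, and establish the equality via cyclicity of the Dixmier trace.

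First I would observe that $B_\zeta$ is a Banach $*$-algebra by Proposition \ref{B}, so $a^*\in B_\zeta$, and Proposition \ref{thm:Z_1-landing} then delivers $a^*Ga,\,aGa^*\in\Z_1$. Since $\Z_1^0$ is a $*$-closed two-sided ideal of $\cn$ (being the $(1,\infty)$-norm closure of $\L^1$, and $\mu_t$ being $*$-invariant), the hypothesis $[G,a]\in\Z_1^0$ gives $[G,a^*]=-[G,a]^*\in\Z_1^0$, and the ideal property then yields $a^*[G,a],\,a[G,a^*]\in\Z_1^0$. The algebraic identities
$$a^*aG=a^*Ga-a^*[G,a]\,,\qquad aa^*G=aGa^*-a[G,a^*]$$
therefore express $a^*aG$ and $aa^*G$ as sums of an element of $\Z_1$ and an element of $\Z_1^0$, so both belong to $\Z_1$; moreover, since every Dixmier trace vanishes on $\Z_1^0$, I obtain
$$\tau_\omega(a^*aG)=\tau_\omega(a^*Ga)\,,\qquad \tau_\omega(aa^*G)=\tau_\omega(aGa^*)\,.$$

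Positivity is then immediate from $a^*Ga=(G^{1/2}a)^*(G^{1/2}a)\geq 0$ and $aGa^*=(aG^{1/2})(aG^{1/2})^*\geq 0$, together with the positivity of $\tau_\omega$ on $\Z_1$; hence $\tau_\omega(a^*aG)\geq 0$ and $\tau_\omega(aa^*G)\geq 0$.

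For the equality, I would invoke the tracial property of the Dixmier trace in the form: whenever $X,Y\in\cn$ are such that both $XY$ and $YX$ lie in $\Z_1$, one has $\tau_\omega(XY)=\tau_\omega(YX)$. This is the cyclicity of $\tau_\omega$ on the Marcinkiewicz ideal $M_{1,\infty}=\Z_1$, a consequence of Kalton's theorem on the commutator subspace. Taking $X=a^*$ and $Y=aG$, whose products $XY=a^*aG$ and $YX=aGa^*$ both lie in $\Z_1$ by the first step, this yields
$$\tau_\omega(a^*aG)=\tau_\omega(a^*\cdot aG)=\tau_\omega(aG\cdot a^*)=\tau_\omega(aGa^*)=\tau_\omega(aa^*G)\,,$$
the outer equalities being the identifications already established. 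The main obstacle is precisely this cyclicity step: the elementary cyclicity $\tau_\omega(ST)=\tau_\omega(TS)$ valid for $T\in\Z_1$ and $S\in\cn$ is insufficient, since neither $a^*$ nor $aG$ individually lies in $\Z_1$, and one genuinely needs the deeper vanishing of $\tau_\omega$ on commutators of bounded operators that happen to lie in $\Z_1$.
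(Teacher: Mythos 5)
Your first two steps coincide with the paper's: the decomposition $a^*aG=a^*Ga+a^*[a,G]$ together with Proposition \ref{thm:Z_1-landing} and the vanishing of $\tau_\omega$ on $\Z_1^0$ reduces everything to showing $\tau_\omega(a^*Ga)=\tau_\omega(aGa^*)$, and positivity is immediate. The gap is in how you close this last equality. You invoke a cyclicity principle --- $\tau_\omega(XY)=\tau_\omega(YX)$ for \emph{bounded} $X,Y$ with both products in $\Z_1$ --- and attribute it to Kalton's commutator-subspace theorem. You correctly identify that the elementary cyclicity ($T\in\Z_1$, $S\in\cn$) does not apply, but the substitute you reach for is not available here: the spectral characterisation of $\operatorname{Com}(M_{1,\infty})$ and the resulting ``Lidskii-type'' behaviour of Dixmier traces is a type $I$ result about $\B(\HH)$, whereas this paper works with a general semifinite $(\cn,\tau)$, where no such theorem is at your disposal (and even in $\B(\HH)$, deducing that $[X,Y]\in\operatorname{Com}(M_{1,\infty})$ from $XY,YX\in M_{1,\infty}$ is itself a nontrivial theorem, far heavier than anything else in this proposition). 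As written, the key step is therefore asserted rather than proved.

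The paper gets the swap by an elementary and purely ``$L^2$'' argument that you should compare with. Put $A=G^{1/2}a\in\Z_2$; since $\mu_s(A^*A)=\mu_s(A)^2=\mu_s(A^*)^2=\mu_s(AA^*)$ and $\tau_\omega$ of a positive operator depends only on its singular-value function, one gets for free that
$$
\tau_\omega(a^*Ga)=\tau_\omega(A^*A)=\tau_\omega(AA^*)=\tau_\omega(G^{1/2}aa^*G^{1/2})\,.
$$
This is \emph{not} yet $\tau_\omega(aa^*G)$, and the remaining discrepancy is absorbed by commutators of \emph{fractional} powers: Lemma \ref{commutators-powers} upgrades $[G,a]\in\Z_1^0$ to $[G^{1/2},a]\in\Z_2^0$, and the H\"older-type inclusion $\Z_2^0\,\Z_2\subset\Z_1^0$ shows that the error terms $[G^{1/2},a]a^*G^{1/2}$ and $a[G^{1/2},a^*]G^{1/2}=a[G,a^*]-aG^{1/2}[G^{1/2},a^*]$ all lie in $\Z_1^0$, hence are killed by $\tau_\omega$. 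If you replace your cyclicity appeal by this $\mu_s(A)=\mu_s(A^*)$ device plus the half-power commutator estimates, your argument becomes complete and valid in the full semifinite generality of the paper.
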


\begin{proof}
By Theorem \ref{thm:Z_1-landing}, $a^*Ga,aGa^*\in\cz_1$, and since $aa^*G=aGa^*+a[a^*,G]$, $a^*aG=a^*Ga+a^*[a,G]$, $aa^*G$ and $a^*aG$ belong to $\cz_1$ as well.

Now, for $A\in\Z_2$, we have
$$
\tau_\omega(A^*A)=\omega-\lim_{t\to +\infty}\frac{\int_0^t\mu_s(A)^2ds}{\log(1+t)}=
\omega-\lim_{t\to +\infty}\frac{\int_0^t\mu_s(A^*)^2ds}{\log(1+t)}=\tau_\omega(AA^*)\,.
$$
Now we do some rearranging
\begin{align*}
\tau_\omega(a^*aG)&=\tau_\omega(a^*Ga)+\tau_\omega(a^*[a,G])
=\tau_\omega(a^*Ga)=\tau_\omega(G^{1/2}aa^*G^{1/2})\\
&=\tau_\omega(aG^{1/2}a^*G^{1/2})+\tau_\omega([G^{1/2},a]a^*G^{1/2})
=\tau_\omega(aa^*G)+\tau_\omega(a[G^{1/2},a^*]G^{1/2}),
\end{align*}
as the Dixmier trace vanishes on the ideal $\Z_1^0$, by Lemma \ref{commutators-powers}
$[G^{1/2},a]\in\Z_2^0$, and the fact that  $\Z_2^0\,\Z_2\subset\Z_1^0$.
We complete the proof by observing that
 $$
 a[G^{1/2},a^*]G^{1/2}= a[G,a^*]-aG^{1/2}[G^{1/2},a^*],
 $$ 
 and that both terms on the right hand side of this last equation are in $\Z_1^0$. 
\end{proof}

\section{The converse estimate}
\label{conter-ex}
In the previous Section we have shown that 
$
a\in B_\zeta\Rightarrow a^*Ga\in \Z_1\,.
$
 But the converse requires more assumptions.
 We demonstrate this by exhibiting a counter-example.
Let us introduce one more Banach  $*$-sub-algebra of $\cn$:
\begin{align*}
B_{\Z_1}=B_{\Z_1}(G)&:=\big\{a\in\cn:\|a^*Ga\|_{1,\infty}+\|aGa^*\|_{1,\infty}+\|a\|^2<\infty \big\}.
\end{align*}
It is easy to see that in general $a\in B_{\Z_1}\not\Rightarrow a\in B_\zeta$ by considering the case $G=Id_\cn$.
Then $B_{\Z_1}=\Z_2$ and $B_\zeta =\L^2$.
More realistic examples from spectral triples are also easy to produce.
A positive result in the converse direction is the following.
\begin{prop}
\label{converse}
Let $0\leq a\in \cn$, be such that there exists $\delta>0$ with $a^{1-\delta}\in B_{\Z_1}$ and $[G,a^{1-\delta}]\in\Z_1^0$. Then $a\in B_\zeta$.
\end{prop}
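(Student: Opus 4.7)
The plan is to reduce to studying $b := a^{1-\delta}$ and then estimate $(s-1)\|bG^sb - (bGb)^s\|_1$ via a Laplace--resolvent expansion. Since $a = a^\delta b$ with $[a^\delta, b] = 0$, trace cyclicity together with the positivity of $bG^sb$ gives
\[
\tau(aG^sa) = \tau(a^{2\delta}\cdot bG^sb) \leq \|a\|^{2\delta}\,\tau(bG^sb),
\]
so it suffices to prove $\sup_{s\in(1,2]}(s-1)\tau(bG^sb) < \infty$. We normalize $\|b\|\leq 1$. The hypothesis $bGb\in \Z_1$ combined with Theorem \ref{great} gives $(p-1)\|bGb\|_p \leq C$ for all $p>1$; hence $(s-1)\tau((bGb)^s) = (s-1)\|bGb\|_s^s \leq C^s(s-1)^{1-s}$, which is bounded uniformly as $s\to 1^+$. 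Since both $bG^sb$ and $(bGb)^s$ are nonnegative, it remains to bound $(s-1)\|bG^sb - (bGb)^s\|_1$.

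To handle the difference, apply Lemma \ref{adam} with $A = b$, $B = G$ to obtain $(bGb)^s = bG^{1/2}Y^{s-1}G^{1/2}b$ with $Y := G^{1/2}b^2G^{1/2}$, so that
\[
bG^sb - (bGb)^s = bG^{1/2}\bigl(G^{s-1} - Y^{s-1}\bigr)G^{1/2}b.
\]
The Laplace representation $x^{s-1} = \tfrac{\sin\pi\eps}{\pi}\int_0^\infty \lambda^{-\eps}\,x(1+\lambda x)^{-1}d\lambda$ (with $\eps := s-1$) and the resolvent identity, as in the proof of Lemma \ref{diff3}, rewrite this as
\[
\frac{\sin\pi\eps}{\pi}\int_0^\infty \lambda^{-\eps-1}\cdot b\bigl[(G^{-1} + \lambda b^2)^{-1} - (G^{-1}+\lambda)^{-1}\bigr]b\,d\lambda.
\]
The identity $(G^{-1}+\lambda b^2)^{-1} = G^{1/2}(1+\lambda Y)^{-1}G^{1/2}$ combined with the polar-decomposition trick $(1+\lambda A^*A)^{-1}A^* = A^*(1+\lambda AA^*)^{-1}$ applied to $A = G^{1/2}b$ (so $A^*A = bGb$ and $AA^* = Y$) recasts the integrand in terms of the resolvent of $bGb\in\Z_1$ and the resolvent of $G$, modulo commutator corrections arising from the noncommutation of $b$ and $G$.

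Splitting $\int_0^\infty = \int_0^1 + \int_1^\infty$, one estimates the two pieces in trace norm uniformly in $s$. For $\lambda\in(0,1)$, the operator inequality $(G^{-1}+\lambda)^{-1}\leq G$ together with the factor $(1-b^2)$ (bounded by $1$ since $\|b\|\leq 1$) gives control via the commutator $[G,b]\in\Z_1^0$, using Lemma \ref{commutators-powers} to pass fractional powers through commutators and Lemma \ref{not-uniform} to manage smooth functional calculi. For $\lambda\in(1,\infty)$, the weight $\lambda^{-\eps-1}$ is integrable at infinity, and the Marcinkiewicz partial-trace estimate of Proposition \ref{thm:Z_1-landing} bounds $\|bGb(1+\lambda bGb)^{-1}\|_1$ logarithmically. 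Throughout, Proposition \ref{complex-interpol} and Lemma \ref{limits}(ii) supply the mixed $\Z_p$--$\L^p$ bounds needed to close the estimates, which stay uniform in $s$ by virtue of $bGb\in\Z_1$. The main obstacle, and the reason for the apparatus, is that the commutator must lie in $\Z_1^0$ rather than merely $\Z_1$: only for $\Z_1^0$-terms do the contributions vanish sufficiently fast as $s\to 1^+$ to keep the overall constants bounded. Combining the resulting bound on the difference with the estimate for $(s-1)\tau((bGb)^s)$ yields $\sup(s-1)\tau(bG^sb) < \infty$, whence $b\in B_\zeta$ and therefore $a\in B_\zeta$.
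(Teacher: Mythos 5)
Your opening reduction is where the argument breaks down. The inequality $\tau(aG^sa)=\tau(a^{2\delta}\,bG^sb)\le\|a\|^{2\delta}\,\tau(bG^sb)$ with $b=a^{1-\delta}$ is correct, but it discards precisely the leverage that the hypothesis provides: you are left trying to show that $b\in B_{\Z_1}$ and $[G,b]\in\Z_1^0$ force $\sup_{1<s\le2}(s-1)\tau(bG^sb)<\infty$, which is the $\delta=0$ version of the proposition, and that statement is false. Take $\cn=\B(\cH)$ with the standard trace, $G=\mathrm{Id}$ (a legitimate positive injective contraction, since no summability is assumed of $G$ itself), and $0\le b$ with $\mu_n(b)=n^{-1/2}$. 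Then $bGb=b^2\in\Z_1$ so $b\in B_{\Z_1}$, and $[G,b]=0\in\Z_1^0$, yet $\tau(bG^sb)=\tau(b^2)=\infty$ for every $s$; this is exactly the $B_{\Z_1}=\Z_2\not\subseteq\L^2=B_\zeta$ phenomenon recorded at the start of Section \ref{conter-ex}. In particular your step~3 cannot be salvaged: for this $b$ one has $(s-1)\|bG^sb-(bGb)^s\|_1=(s-1)\big(\tau(b^2)-\tau(b^{2s})\big)=\infty$. The same defect shows up inside the resolvent estimates you sketch: the Lemma \ref{diff3} machinery controls unsymmetrized mixed norms such as $\|a^{1-\delta/2}G^{(1+\eps)/2}\|_{(2+\eps)/(1+\eps)}$ only by peeling off a strictly smaller power and placing the \emph{reduced} power $a^{1-\delta}$ next to $G^{1/2}$; once you have replaced $a$ by the extremal element $b=a^{1-\delta}$ on both sides, no further reduction is available and those norms can be infinite. (Note also that the original example $a=b^{1/(1-\delta)}$ does satisfy the conclusion $a\in\L^2=B_\zeta$; it is only your intermediate target that fails.)

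The proposition is in fact proved without ever forming $(aGa)^s$ or $(bGb)^s$. One writes $aG^{1+\eps}a=a^\delta G^{1/2+\eps}a^{1-\delta}G^{1/2}a+a^\delta[a^{1-\delta},G^{1/2+\eps}]G^{1/2}a$ and applies H\"older: $\|a^\delta G^\eps\|_{1+1/\eps}=O(1)$ by Lemma \ref{conbou} (complex interpolation from $a^{1-\delta}Ga^{1-\delta}\in\Z_1$), $\|G^{1/2}a^{1-\delta}\|_{2+2\eps}=\|a^{1-\delta}Ga^{1-\delta}\|_{1+\eps}^{1/2}=O(\eps^{-1/2})$ by Theorem \ref{great}, $\|G^{1/2}a\|_{2+2\eps}\le\|a\|^{\delta}\|G^{1/2}a^{1-\delta}\|_{2+2\eps}$, and the commutator term is $o(\eps^{-1/2})$ by Lemma \ref{commutators-powers} and Lemma \ref{limits}~i). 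The product is $O(\eps^{-1})$, i.e.\ $a\in B_\zeta$. Observe how the $\delta$-gap is spent: the full power $a$ appears next to $G^{1/2}$ on one side only, where it is dominated by $a^{1-\delta}$, while the leftover $a^\delta$ is paired with the tiny power $G^\eps$ in the very weak norm $\|\cdot\|_{1+1/\eps}$. Your reduction symmetrizes $a^{1-\delta}$ onto both sides and the gap disappears.
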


The proof of this `converse' estimate relies essentially on the following lemma.

\begin{lemma}
\label{conbou}
Let $0\leq a\in B_{\Z_1}$ and $\delta \in(0,1)$. Then, the map 
$\eps\in(0,\delta/2] \mapsto \|a^\delta G^\eps\|_{1+1/\eps}$, 
is bounded.
\end{lemma}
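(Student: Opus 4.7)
The plan is to mimic the argument used in the proof of Lemma \ref{limits} ii), noting that the only role played there by the stronger hypothesis $a\in B_\zeta$ was to conclude $aGa\in\Z_1$ via Theorem \ref{thm:Z_1-landing}. Under the present hypothesis $0\leq a\in B_{\Z_1}$, this membership, together with the quantitative bound $\|aGa\|_{1,\infty}<\infty$, is part of the definition of $B_{\Z_1}$, so the argument can proceed directly.

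First I would fix $\eps\in(0,\delta/2]$ and apply Proposition \ref{complex-interpol} with the parameter $2\eps$ (permissible since $2\eps\leq\delta$) in place of $\eps$, obtaining $a^\delta G^{2\eps}a^\delta\in\Z_{1/(2\eps)}$ with
$$
\|a^\delta G^{2\eps}a^\delta\|_{1/(2\eps),\infty}\leq \|a\|^{2(\delta-2\eps)}\|aGa\|_{1,\infty}^{2\eps}.
$$
Setting $T:=(a^\delta G^{2\eps}a^\delta)^{1/(2\eps)}\in\Z_1$, one has the identity $\|T\|_{1,\infty}=\|a^\delta G^{2\eps}a^\delta\|_{1/(2\eps),\infty}^{1/(2\eps)}$ directly from the definition of the $(p,\infty)$-norm, and the Marcinkiewicz-Schatten estimate \eqref{untruc} applied at $p=1+\eps$ yields $\|T\|_{1+\eps}\leq \eps^{-1/(1+\eps)}\|T\|_{1,\infty}$.

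Second, I would invoke the algebraic identity already computed in \eqref{labell}, which is valid for any $0\leq a\in\cn$ without further hypotheses,
$$
\|a^\delta G^\eps\|_{1+1/\eps}=\|(a^\delta G^{2\eps}a^\delta)^{1/(2\eps)}\|_{1+\eps}^\eps=\|T\|_{1+\eps}^\eps,
$$
to combine the two preceding bounds into
$$
\|a^\delta G^\eps\|_{1+1/\eps}\leq \eps^{-\eps/(1+\eps)}\|a\|^{\delta-2\eps}\|aGa\|_{1,\infty}^{\eps}.
$$

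Since $\eps^{-\eps/(1+\eps)}\to 1$ as $\eps\to 0^+$ and the factors $\|a\|^{\delta-2\eps}$ and $\|aGa\|_{1,\infty}^{\eps}$ remain bounded on the compact range $\eps\in(0,\delta/2]$, the right-hand side is uniformly bounded there, which is the claim. There is no genuine obstacle: the whole content of the lemma is the observation that the detour through $B_\zeta$ in Lemma \ref{limits} ii) is unnecessary once $\|aGa\|_{1,\infty}$ is directly finite, as is built into the definition of $B_{\Z_1}$.
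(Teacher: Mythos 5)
Your proof is correct and follows essentially the same route as the paper: the paper's own proof likewise observes that the hypothesis $a\in B_{\Z_1}$ supplies $aGa\in\Z_1$ directly, applies Proposition \ref{complex-interpol}, and then repeats verbatim the chain of estimates from Lemma \ref{limits} ii) (your passage through the symmetrised quantity $\|a^\delta G^{2\eps}a^\delta\|_{1/(2\eps),\infty}$ rather than $\|a^\delta G^{\eps}\|_{1/\eps,\infty}$ is only a cosmetic difference, as these are related by the identity \eqref{labell}).
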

\begin{proof}
We know by assumption that $aGa\in\Z_1$ and thus from Proposition  \ref{complex-interpol}, we know that for $\delta \in(0,1)$, $a^\delta G^\eps\in\Z_{1/\eps}$ with
$$
\|a^\delta G^\eps\|_{1/\eps,\infty}\leq  \|a\|^{\delta-2\eps}\|aGa\|_{1,\infty}^\eps\,.
$$
We conclude using the same chain of estimates as in the proof of Lemma \ref{limits} ii):
\begin{align*}
\|a^\delta G^\eps\|_{1+1/\eps}=\||a^\delta G^\eps|^{1/\eps}\|_{1+\eps}^{\eps}
&\leq \Big(  \eps^{-\frac1{1+\eps}} \||a^\delta G^\eps|^{1/\eps}\|_{1,\infty}\Big)^{\eps}\\
&\hspace{2.5cm}=  \eps^{-\frac\eps{1+\eps}}\|a^\delta G^\eps\|_{{1/\eps},\infty}
 \leq  \eps^{-\frac\eps{1+\eps}} \|a\|^{\delta-2\eps}\|aGa\|_{1,\infty}^\eps.\qed
\end{align*}
\hideqed
\end{proof}

\begin{proof}[Proof of Proposition \ref{converse}]
Note that by the stability of $B_\zeta$ and  $B_{\Z_1}$ under the map $a\mapsto |a|$, we may assume
$a\geq 0$.
Then, we write
$$
aG^{1+\eps}a=a^\delta G^{1/2+\eps}a^{1-\delta}G^{1/2}a+a^\delta [a^{1-\delta},G^{1/2+\eps}]G^{1/2}a
\,,$$
and thus
\begin{align}
\label{hoo}
&\|aG^{1+\eps}a\|_1
\leq\|a^\delta G^\eps\|_{1+1/\eps}\|G^{1/2}a^{1-\delta}\|_{2+2\eps}\|G^{1/2}a\|_{2+2\eps}\\
&\hspace{5cm}+\|a\|^\delta\|[a^{1-\delta},G^{1/2+\eps}]\|_{2(1+\eps)/(1+2\eps)}\|G^{1/2}a\|_{2+2\eps}
\nonumber\,.
\end{align}
From Lemma \ref{commutators-powers}, we have for $\eps\leq 1/2$
\begin{equation}
\label{RTE}
\|[a^{1-\delta},G^{1/2+\eps}]\|_{2(1+\eps)/(1+2\eps)}\leq \|a\|^{(1-\delta)(1/2-\eps)}\|[a^{1-\delta},G]\|_{1+\eps }^{1/2+\eps}\,.
\end{equation}
Since $[a^{1-\delta},G]\in\Z_1^0$, we know from Lemma \ref{limits} i) that 
$\|[a^{1-\delta},G]\|_{1+\eps }=o(\eps^{-1})$. In particular, $\|[a^{1-\delta},G]\|_{1+\eps }^\eps=O(1)$ and thus the inequality \eqref{RTE} gives $\|[a^{1-\delta},G^{1/2+\eps}]\|_{2(1+\eps)/(1+2\eps)}=o(\eps^{-1/2})$.
Moreover, since $a^{1-\delta}Ga^{1-\delta}\in \Z_1$, we know from Theorem \ref{great}
that $\|a^{1-\delta}Ga^{1-\delta}\|_{1+\eps}=O(\eps^{-1})$, which gives
$$
\|G^{1/2}a\|_{2+2\eps}\leq \|a\|^\delta \|G^{1/2}a^{1-\delta}\|_{2+2\eps}=
\|a\|^\delta \|a^{1-\delta}Ga^{1-\delta}\|_{1+1\eps}^{1/2}=O(\eps^{-1/2})\,.
$$
Finally, by Lemma \ref{conbou} we know that
$ \|a^\delta G^\eps\|_{1+1/\eps}=O(1)$. Putting everything together,
the inequality \eqref{hoo} gives
us $\|aG^{1+\eps}a\|_1=O(\eps^{-1})$, that is
$\sup_{\eps> 0}\eps\|aG^{1+\eps}a\|_1<\infty$,
i.e. $a\in B_\zeta$.
\end{proof}

Now we can state a $\Z_1$ version of Proposition \ref{trace-diffrence}.

\begin{prop}
\label{trace-diffrence2}
Let $0\leq a\in \cn$ be such that there exists $\delta>0$ with $a^{1-\delta}\in B_{\Z_1}$,  
and $[G,a^{1-\delta}]\in\Z_1^0$. Then
$$
\lim_{s\to 1^+}(s-1)\,\big\|aG^sa-(aGa)^s\big\|_1=0\,.
$$
\end{prop}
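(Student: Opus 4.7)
The strategy is a bootstrap: rather than redo the lengthy chain of lemmas that underlie Proposition \ref{trace-diffrence} in the weaker $B_{\Z_1}$-setting, I would upgrade the hypothesis using Proposition \ref{converse} and then invoke Proposition \ref{trace-diffrence} directly, with the exponent shifted slightly. Recall from the example at the start of Section \ref{conter-ex} that $a\in B_{\Z_1}$ does not imply $a\in B_\zeta$ in general, so the converse estimate has to do real work here; but it is precisely what allows a `gap' in the exponent to produce $B_\zeta$-membership at a nearby power.

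Step one is to apply Proposition \ref{converse} to the positive operator $b:=a^{1-\delta/2}$. Setting $\eta:=\delta/(2-\delta)\in(0,1)$, a direct computation gives
\[
(1-\tfrac{\delta}{2})(1-\eta)=\tfrac{2-\delta}{2}\cdot\tfrac{2(1-\delta)}{2-\delta}=1-\delta,
\]
so that $b^{1-\eta}=a^{1-\delta}$. By hypothesis this operator lies in $B_{\Z_1}$ and satisfies $[G,b^{1-\eta}]=[G,a^{1-\delta}]\in\Z_1^0$. Proposition \ref{converse} therefore delivers $a^{1-\delta/2}=b\in B_\zeta$.

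Step two is to upgrade the commutator condition from exponent $1-\delta$ to exponent $1-\delta/2$. This is exactly the kind of operator-Lipschitz transfer used in the proof of Theorem \ref{3.8}: by combining Theorem 3.1 of \cite{PS1}, Theorem 4 of \cite{PS2}, and the fact that $\Z_1^0$ is an interpolation space for $(\L^1,\cn)$, the function $t\mapsto t^{(1-\delta/2)/(1-\delta)}$, which is $C^1$ on the bounded interval $[0,\|a^{1-\delta}\|]$, is operator-Lipschitz in the ideal sense needed, and it transports $[G,a^{1-\delta}]\in\Z_1^0$ to $[G,a^{1-\delta/2}]=[G,(a^{1-\delta})^{(1-\delta/2)/(1-\delta)}]\in\Z_1^0$. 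I expect this step to be the main (and indeed the only real) obstacle, since the symbolic exponent-chasing is trivial but the underlying operator-Lipschitz theory is nontrivial; fortunately the authors have already quoted the exact tools at the proof of Theorem \ref{3.8}, and the argument is identical here.

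Step three is immediate: with $\delta':=\delta/2>0$ we have verified $a^{1-\delta'}\in B_\zeta$ and $[G,a^{1-\delta'}]\in\Z_1^0$, which are precisely the hypotheses of Proposition \ref{trace-diffrence}. Applying that proposition yields
\[
\lim_{s\to 1^+}(s-1)\,\big\|aG^sa-(aGa)^s\big\|_1=0,
\]
which is the desired conclusion of Proposition \ref{trace-diffrence2}.
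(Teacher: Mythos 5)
Your proof is correct and follows essentially the same route as the paper's: apply Proposition \ref{converse} to obtain $a^{1-\delta/2}\in B_\zeta$, transfer the commutator condition via the Potapov--Sukochev estimates and interpolation, and then invoke Proposition \ref{trace-diffrence}. Your version is in fact slightly more careful, since you upgrade the commutator to $[G,a^{1-\delta/2}]\in\Z_1^0$ (exactly the hypothesis Proposition \ref{trace-diffrence} requires with $\delta'=\delta/2$) and make the exponent bookkeeping in the application of Proposition \ref{converse} explicit.
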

\begin{proof}
From Proposition \ref{converse}, we  deduce that  $a^{1-\delta/2}\in B_{\zeta}$. 
Combining Theorem 3.1 from \cite{PS1} with Theorem 4 from \cite{PS2}, 
and taking into account that $\Z_1^0$ is an interpolation space for the couple 
$(\L^1,\cn)$, we get from 
$[G,a^{1-\delta}]\in\Z_1^0$ that $[G,a]\in\Z_1^0$ as well. Then
the claim follows directly from Proposition \ref{trace-diffrence}.
\end{proof}

\section{Nonunital Spectral Triples}
\label{sec:nonunital-case}

We will now use the 
results of the previous Sections to give an {\it a posteriori} definition of a finitely summable
nonunital semifinite spectral triple 
$(\A,\cH,\D)$, relative to $(\cn,\tau)$.
A semifinite spectral triple  consists of a separable Hilbert space 
$\cH$ carrying a faithful 
representation of $\cn$, together with an essentially self-adjoint operator $\D$ 
affiliated  with $\cn$ and a (nonunital) $*$-sub-algebra $\A$ of 
$\cn$ such that $[\D,a]$ is bounded for all $a\in\A$ and $a(1+\D^2)^{-1/2}\in\K_\cn$. 

The main difference between the notions of finitely summable 
{\em unital} and  {\em nonunital} spectral triple is that, 
in the unital case, $\D$ alone is enough to characterise the spectral dimension. 
However, the situation in the nonunital case is far more subtle  since one 
needs a delicate interplay between $\A$ and $\D$ to obtain a good definition  of
spectral dimension. 

Despite our previous focus on `$L^1$ as the square of $L^2$', 
we now define summability
for spectral triples in an `$L^1$' fashion. The reason for this is 
the local index
formula, which we will address elsewhere. However here we will 
quickly return, via the 
results of previous sections, to the $L^2$ type description.

\begin{definition}\label{def:spec-dim} 
Let $(\A,\cH,\D)$ be a nonunital semifinite spectral triple, relative to $(\cn,\tau)$.
We then let
$$
p:=\inf\{s>0\,:\,{\rm \ for\  all \ }0\leq a\in\A\,, \ \tau\big(a(1+\D^2)^{-s/2}\big)<\infty
\}\,,
$$
and when it exists, we say that $(\A,\HH,\D)$ is finitely summable and 
call $p$ the spectral dimension of the triple $(\A,\HH,\D)$.
\end{definition}

{\bf Remark}. This definition is closer in spirit to that employed in the local index formula,
and we will examine, and augment, this definition in another place. The use of 
the unsymmetrised form of the condition amounts to making the strongest possible 
assumption. We stress the dependence on the algebra $\A$ in the previous definition. 
Finally, note that by \cite{Bik} we obtain a positive functional for each $s>0$ since for $a\geq 0$,
$$
\tau(a(1+\D^2)^{-s/2})=\tau((1+\D^2)^{-s/4}a(1+\D^2)^{-s/4})\geq 0.
$$

\begin{definition}
Let $(\A,\cH,\D)$ be a finitely summable 
nonunital semifinite spectral triple, relative to $(\cn,\tau)$, with spectral dimension $p\geq 1$.
If  for all $a\in\A$ 
$$
\limsup_{s\searrow p}\left|(s-p)\tau(a(1+\D^2)^{-s/2})\right|<\infty
$$
we say that $(\A,\HH,\D)$
is $\Z_p$-summable. In this case we let $B_p:=B_\zeta((1+\D^2)^{-p/2})$.
\end{definition}

The special case described by the definition of $\Z_p$ is the nonunital analogue of
the most studied summability criteria in the unital case, usually called 
$(p,\infty)$-summability. Variations on the definition are certainly possible, and
only further examples can determine the best form of the definition.
Note that given $(\cn,\tau,\D)$, we have the inclusions $B_p\subset B_q$, $p\leq q$.

\begin{lemma} 
Let $(\A,\cH,\D)$ be a $\Z_p$-summable nonunital semifinite spectral triple, 
relative to $(\cn,\tau)$. Suppose that there is a $*$-algebra $\B\subseteq B_p$
such that
$\A\subset \B^2={\rm span}\{b_1b_2:b_1,\,b_2\in\B\}$  
and $b_1[b_2,(1+\D^2)^{-p/2}]\in\Z_1^0$ 
for all $b_1,\,b_2\in \B$. Then 
$$
a\mapsto\tau_\omega(a(1+\D^2)^{-p/2})
$$
defines a positive trace on $\A$.
\end{lemma}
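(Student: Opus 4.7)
The plan is to verify three properties of the functional $\varphi(a) := \tau_\omega(aG)$, $G := (1+\D^2)^{-p/2}$: (i) well-definedness on $\A$, meaning $aG \in \Z_1$; (ii) the trace identity $\varphi(a_1 a_2) = \varphi(a_2 a_1)$; (iii) positivity $\varphi(c^* c) \geq 0$. Each reduces to commutator bookkeeping built on top of Proposition \ref{thm:Z_1-landing} and the standing hypothesis that $b_1[b_2, G] \in \Z_1^0$ for all $b_1, b_2 \in \B$.

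For (i), write $a \in \A \subseteq \B^2$ as a finite sum $a = \sum_i b_{1,i} b_{2,i}$ with $b_{\cdot, i} \in \B \subseteq B_p = B_\zeta$. Splitting
\[
aG = \sum_i b_{1,i}\, G\, b_{2,i} \;+\; \sum_i b_{1,i} [b_{2,i}, G],
\]
Proposition \ref{thm:Z_1-landing} places the first sum in $\Z_1$, and the hypothesis places the second in $\Z_1^0 \subseteq \Z_1$. Hence $aG \in \Z_1$ and $\varphi$ makes sense.

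For (ii), cyclicity of $\tau_\omega$ on $\cn \cdot \Z_1$ (which applies by step~(i)) gives $\tau_\omega(acG) = \tau_\omega(cG\cdot a) = \tau_\omega(caG) + \tau_\omega(c[G,a])$, so it is enough to prove $c[G,a] \in \Z_1^0$. Decomposing $a = \sum_i b_{1,i} b_{2,i}$ via the Leibniz rule, $c[G,a]$ is a sum of terms of two types: $c\, b_{1,i}[G, b_{2,i}]$ and $c[G, b_{1,i}]\, b_{2,i}$. The first type lies in $\Z_1^0$ directly by hypothesis combined with the fact that $\Z_1^0$ is a two-sided $\cn$-ideal. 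For the second type further decompose $c = \sum_j d_{1,j} d_{2,j}$ with $d_{\cdot,j} \in \B$; the hypothesis applied to the pair $(d_{2,j}, b_{1,i}) \in \B \times \B$ yields $d_{2,j}[G, b_{1,i}] \in \Z_1^0$, after which the outer factors $d_{1,j}$ and $b_{2,i}$ are absorbed by the ideal property. As Dixmier traces vanish on $\Z_1^0$, the identity $\varphi(ac) = \varphi(ca)$ follows. The very same bookkeeping, applied now to $\tau_\omega(c^*cG) - \tau_\omega(c^*Gc) = \tau_\omega(c^*[c,G])$ and using that $c^* \in \B^2$ because $\B$ is a $*$-algebra, shows $c^*[c,G] \in \Z_1^0$; together with $c^*Gc \in \Z_1$ by Proposition \ref{thm:Z_1-landing} (since $c \in B_\zeta$) and $c^*Gc \geq 0$ (since $G \geq 0$), this yields (iii).

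The main obstacle is precisely the commutator bookkeeping just described: the hypothesis is only directly usable on products $b_1[b_2, G]$ with both factors already in $\B$, so every commutator $[G, x]$ with $x \in \A$ must be unfolded into a $\B$-product and a $\B$-element arranged immediately adjacent to the commutator, letting the remaining bounded factors be absorbed through the two-sided $\cn$-ideal property of $\Z_1^0$. Once this pattern is identified, steps (ii) and (iii) become essentially mechanical.
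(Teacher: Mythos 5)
Your proof is correct, and the well-definedness step is exactly the paper's: decompose $a=\sum_i b^i_1b^i_2$, write $aG=\sum_i b^i_1[b^i_2,G]+b^i_1Gb^i_2$, and use the hypothesis for the first sum and Proposition \ref{thm:Z_1-landing} for the second. Where you diverge is in the trace property and positivity. The paper routes both through Proposition \ref{prop:dixy-trace}, which gives $\tau_\omega(a^*aG)=\tau_\omega(aa^*G)\ge 0$ for $a\in B_\zeta$ with $[G,a]\in\Z_1^0$ (so the full trace identity on $\A$ is implicitly obtained by polarisation), and it derives positivity for $0\le a\in\A$ from the symmetrisation $\tau_\omega(aG)=\tau_\omega(G^{1/2}aG^{1/2})$. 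You instead prove $\varphi(ac)=\varphi(ca)$ directly from cyclicity of $\tau_\omega$ between $\cn$ and $\Z_1$ together with the vanishing of Dixmier traces on $\Z_1^0$, reducing everything to showing $c[G,a]\in\Z_1^0$; your bookkeeping there is sound --- in particular your trick of re-expanding $c$ in $\B^2$ so that a $\B$-element sits immediately to the left of $[G,b_{1,i}]$ neatly avoids having to pass to adjoints, which is what one would otherwise do using that $\Z_1^0$ is a $*$-ideal. This buys you a more self-contained and arguably more complete argument for the trace identity. Two minor remarks: you should say explicitly that $c\in\B^2\subseteq B_\zeta$ because $B_\zeta$ is an algebra (Proposition \ref{B}), which is what licenses the appeal to Proposition \ref{thm:Z_1-landing} in step (iii); and note that you establish positivity in the form $\varphi(c^*c)\ge 0$ for $c\in\A$, whereas the paper proves $\varphi(a)\ge 0$ for every $0\le a\in\A$ --- these differ unless every positive element of $\A$ is a product $c^*c$ with $c\in\A$, though your reading is the standard notion of positivity for a functional on a $*$-algebra.
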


\begin{proof}
We make our usual abbreviation $G=(1+\D^2)^{-p/2}$.
With the assumption above we have a finite sum $a=\sum_ib^i_1b^i_2$ and so
\begin{align*}
aG = \sum_i b^i_1b^i_2G
=\sum_i b^i_1[b^i_2,G]+b_1^iGb_2^i.
\end{align*}
Since for each $i$, $b^i_1[b^i_2,G]\in\Z^0_1$ and $b^i_1Gb^i_2\in\Z_1$, 
for any Dixmier trace $\tau_\omega$,
$$
\tau_\omega(aG)=\sum_i\tau_\omega(b^i_1Gb^i_2)
$$
is well-defined. To see that $a\mapsto \tau_\omega(aG)$ is an (unbounded) trace we employ
Proposition \ref{prop:dixy-trace}. We just need to check that 
$[a,G]\in Z^0_1$ for positive
$a\in\A$. However, using the Leibnitz rule
$$
[a,G]=\sum_ib^i_1[b^i_2,G]+[b^i_1,G]b^i_2\in Z_1^0.
$$
Positivity follows from \cite{Bik} again since $\tau_\omega(aG)=\tau_\omega(G^{1/2}aG^{1/2})\geq 0$
for $a\geq 0$.
\end{proof}

\begin{corollary}
\label{cor:compute-dixy}
Let $(\A,\cH,\D)$ be a $\Z_p$-summable nonunital semifinite spectral triple, 
relative to $(\cn,\tau)$. Suppose that $0\leq a\in\A$ is of the form 
$b^2$ for $b\geq 0$ with $b^{1-\delta}\in\B\subseteq B_p$ for some $\delta>0$, and with $b[b,G]\in\Z^0_1$. 
Then choosing 
$\omega,\,\tilde\omega$
as in \cite[Theorem 4.11]{CRSS} we have
$$
\tau_\omega(aG)= 
\tilde\omega-\lim\frac{1}{r}\tau(bG^{1+\frac{1}{r}}b)
$$
\end{corollary}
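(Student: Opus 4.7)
The plan is to prove the identity in two natural steps: first to reduce $\tau_\omega(aG)$ to the symmetric expression $\tau_\omega(bGb)$ using the stated commutator hypothesis, and then to apply Proposition \ref{3.7} to $b$ to convert $\tau_\omega(bGb)$ into the advertised $\tilde\omega$-limit of zeta values.

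For the first step, I would simply expand
$$aG = b^2G = bGb + b[b,G].$$
The given hypothesis $b[b,G]\in\Z^0_1$, combined with the fundamental vanishing of every Dixmier trace on the closed ideal $\Z^0_1$, yields at once $\tau_\omega(aG) = \tau_\omega(bGb)$. This is where the commutator assumption is used.

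For the second step, I want to invoke Proposition \ref{3.7} on the positive operator $b$. The input $b^{1-\delta}\in\B\subseteq B_p = B_\zeta$ supplies the integrability side of its hypotheses directly, and by Lemma \ref{Bp-grew}(ii) the operator inequality $b^2\leq \|b\|^{2\delta}(b^{1-\delta})^2$ also places $b$ itself in $B_\zeta$ (this ensures $bGb\in\Z_1$ via Theorem \ref{thm:Z_1-landing}). The commutator hypothesis $[G,b^{1-\delta}]\in\Z^0_1$ demanded by Proposition \ref{3.7} is in force under the standing framework of the preceding lemma, where the algebra $\B$ is equipped with the property $b_1[b_2,G]\in\Z^0_1$ for all $b_1,b_2\in\B$. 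Granted this, the equivalence of items (3) and (5) in Proposition \ref{3.7} (with $\omega,\tilde\omega$ chosen as in \cite[Theorem 4.11]{CRSS}) provides
$$\tau_\omega(bGb) = \tilde\omega-\lim_{r\to\infty}\frac{1}{r}\,\tau\big(bG^{1+\frac{1}{r}}b\big),$$
and chaining this with the first step yields the corollary.

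The subtle point — and the main obstacle I foresee — is the verification of $[G,b^{1-\delta}]\in\Z^0_1$ from the stated hypotheses. The corollary only explicitly posits $b[b,G]\in\Z^0_1$, which is the precise ingredient that powers the passage from $aG$ to $bGb$, but does not immediately yield a commutator condition on fractional powers of the form needed for Proposition \ref{3.7}. Either one reads the corollary inside the context of the preceding lemma (whose standing commutator assumption supplies what is needed after a short manipulation), or one undertakes a finer direct argument using Lemma \ref{commutators-powers} together with the BKS inequality and the identities relating $b^\alpha Gb^\alpha$ to the symmetric commutator of interest; the latter route will be the bulk of the technical work.
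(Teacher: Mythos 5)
Your overall strategy is the right one and in fact mirrors the paper's: the paper proves the corollary by a one-line appeal to Theorem \ref{3.8} applied with $a_+=a=b^2$ and $a_-=0$, and the proof of that theorem opens with exactly your decomposition $aG=bGb+b[b,G]$ followed by the vanishing of $\tau_\omega$ on $\Z_1^0$. So your first step is fine. The genuine gap is in your second step: the equivalence of items (3) and (5) of Proposition \ref{3.7} is a \emph{measurability} statement --- it says that the ordinary limit in (3) exists if and only if every dilation-invariant generalized limit assigns $bGb$ one and the same value, which then equals that ordinary limit. It does \emph{not} assert $\tau_\omega(bGb)=\tilde\omega-\lim_{r\to\infty}\tfrac1r\tau\big(bG^{1+\frac1r}b\big)$ for a fixed compatible pair $(\omega,\tilde\omega)$ when no ordinary limit exists, and that non-measurable case is precisely what the corollary is designed to cover (otherwise there would be no need to choose $\omega,\tilde\omega$ as in \cite[Theorem 4.11]{CRSS}). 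The correct chain is: $\tau_\omega(bGb)=\tilde\omega-\lim_{r\to\infty}\tfrac1r\tau\big((bGb)^{1+\frac1r}\big)$ by \cite[Theorem 4.11]{CRSS} (equivalently \cite[Lemma 3.2(i)]{CPS2}) --- this is where the special choice of $\omega,\tilde\omega$ actually enters --- followed by $\tilde\omega-\lim_{r\to\infty}\tfrac1r\tau\big((bGb)^{1+\frac1r}\big)=\tilde\omega-\lim_{r\to\infty}\tfrac1r\tau\big(bG^{1+\frac1r}b\big)$, which is Corollary \ref{3.6}(ii). This is exactly the route taken inside the proof of Theorem \ref{3.8}; replacing your citation of Proposition \ref{3.7} by these two results closes the gap.

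Your closing worry about $[G,b^{1-\delta}]\in\Z_1^0$ is well placed and is not a defect of your argument so much as of the corollary's statement: Corollary \ref{3.6} and Theorem \ref{3.8} require $[G,b^{1-\delta}]\in\Z_1^0$, whereas the corollary explicitly posits only $b[b,G]\in\Z_1^0$ together with $b^{1-\delta}\in\B$. The intended reading is that the corollary sits inside the framework of the preceding lemma, whose standing hypothesis on commutators for elements of $\B$ supplies what is needed; the paper's one-line proof assumes this silently. Your placement of $b$ itself in $B_\zeta$ via Lemma \ref{Bp-grew}(ii) (or via Lemma \ref{Bp-grew}(i), as the paper does at the start of the proof of Lemma \ref{diff3}) is correct.
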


\begin{proof} This is the content of our main result Theorem \ref{3.8}.
\end{proof}

Our final aim is to  check the validity of our assumptions in the 
context of spectral triples, using smoothness of the spectral triple.

\begin{lemma}  In the definition of $B_p$ we may use $G=(1+\D^2)^{-p/2}$
interchangeably with  $G_1=(1+|\D|)^{-p}$.
\end{lemma}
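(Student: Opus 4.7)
The plan is to reduce the claim to an equivalence of the seminorms defining $B_\zeta(G)$ and $B_\zeta(G_1)$, exploiting the fact that both $G$ and $G_1$ are functions of the single self-adjoint operator $\D$ and are therefore jointly diagonalisable via the scalar functional calculus.

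First, I would record the elementary scalar inequalities
$$(1+t^2)^{1/2} \leq 1+|t| \leq \sqrt{2}\,(1+t^2)^{1/2}, \qquad t\in\R,$$
which follow by squaring and comparing polynomials. Applying the functional calculus to $\D$, inverting (legitimate because both sides are bounded below by $1$), and raising to the $p$-th power — valid because the two operators involved commute, both being functions of $\D$ — yields the baseline comparison
$$2^{-p/2}\,G \;\leq\; G_1 \;\leq\; G.$$

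The key step is to promote this to a comparison of arbitrary powers. Since $G$ and $G_1$ commute, the scalar inequalities lift directly through joint functional calculus, giving
$$2^{-ps/2}\,G^s \;\leq\; G_1^s \;\leq\; G^s, \qquad s\geq 0.$$
Conjugating by $a\in\cn$ and applying $\tau$, for every $s\in[1,2]$ we obtain
$$2^{-p}\,\tau(a^*G^s a) \;\leq\; \tau(a^*G_1^s a) \;\leq\; \tau(a^*G^s a).$$
Taking square roots, multiplying by $\sqrt{s-1}$, and passing to the supremum over $s\in[1,2]$ yields $2^{-p/2}\|a\|_{\zeta,G} \leq \|a\|_{\zeta,G_1} \leq \|a\|_{\zeta,G}$, and the same bound holds for $a^*$. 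Since the operator norm $\|a\|$ does not depend on the choice of $G$, the two norms defining $B_\zeta(G)$ and $B_\zeta(G_1)$ are equivalent, so the Banach $*$-algebras coincide as sets with equivalent norms.

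There is no serious obstacle here: the entire argument rests on the commutativity of $G$ and $G_1$, which reduces operator inequalities to scalar ones. The only minor care required is the passage to powers (which uses commutativity rather than the more delicate L\"owner--Heinz theorem) and the observation that $G^{-1}, G_1^{-1}\geq 1$ so that inversion preserves the inequality. If one preferred, the same argument could be run on the heat-kernel side via $\|\cdot\|_{\HK}$, since Theorem \ref{NE} guarantees the two semi-norms are equivalent for any fixed choice of $G$; but the zeta form is the most direct route.
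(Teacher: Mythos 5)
Your proof is correct and is essentially the paper's argument: both rest on the observation that $G_1G^{-1}$ is a function of $G$ bounded above and below by positive constants (equivalently, your scalar inequalities giving $2^{-p/2}G\le G_1\le G$ and hence $2^{-ps/2}G^s\le G_1^s\le G^s$), which immediately yields equivalence of the $\zeta$-seminorms uniformly for $s\in[1,2]$. The only material in the paper's proof beyond this—and it is not required by the statement of the lemma itself—is a sketch that $a(G-G_1)b$ is trace class for $a,b\in B_p$, so that the commutator hypotheses used later may also be phrased with either $G$ or $G_1$.
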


\begin{proof} First note that we may apply Lemma \ref{Bp-grew}
in this situation. This is because we may write $G_1=Gf(G)$ or $G=G_1g(G_1)$
with $f,g\in L^\infty(\R)$. Thus in the definition of $B_p$ 
either $G$ or $G_1$ may be used. 
We note in addition that for the purpose
of studying commutators, we may also use $G$ or $G_1$
interchangeably as can be seen by the following sketch argument.
 For $a,b\in B_p$ we have
$a(G-G_1)b= aG(1-G_1G^{-1})b$. It is not difficult to 
find the bounded function $f$ with $G_1=f(G)$.
Now define a function $h$ such that
$1-f(G)G^{-1} = G^{1/p}h(G)$. Then some careful calculation proves explicitly that $h$ is bounded.
So we have $a(G-G_1)b= aG^{1/2}h(G)G^{1/2+1/p}b$ which is trace class.
\end{proof}

\begin{definition}
Let $(\A,\cH,\D)$ be a $\Z_p$-summable nonunital semifinite spectral triple, 
relative to $(\cn,\tau)$ with $\A\subseteq \B^2$, $\B\subseteq B_p$. 
We say $(\A,\HH,\D)$ 
is smooth to order $k$ if for all $b\in\B$ and for all $0\leq j\leq k$,  
$\delta^j(b)\in B_p$. Here $\delta$ is the unbounded derivation 
given by $\delta(T):=[|\D|,T]$, $T\in\cn$.
\end{definition}

Now we check that being smooth to order $\lfloor p\rfloor+1$ 
($\lfloor .\rfloor$ is the integer-part function) is enough to ensure 
that $b_1[b_2,(1+\D^2)^{-p/2}]\in\Z_1^0$ for $b_1,\,b_2\in\B\subseteq B_p$.

\begin{prop}
\label{TC}
Let  $(\A,\HH,\D)$ be a finitely summable nonunital semifinite spectral triple 
with spectral dimension $p$, and which is smooth to order $\lfloor p\rfloor+1$.
Suppose moreover that $\A\subseteq\B^2$ for $\B\subseteq B_p$.
Then for all $b_1,\,b_2\in\B\subseteq B_\zeta$,  
$b_1[b_2,(1+|\D|)^{-p}]$ is trace class.
\end{prop}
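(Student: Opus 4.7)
Let $H := 1 + |\D|$. The goal is to show $b_1 [b_2, H^{-p}] \in \L^1(\cn,\tau)$ for $b_1, b_2 \in \B$; I set $n := \lfloor p\rfloor$, so that by smoothness to order $n+1$ one has $\delta^j(b) \in B_p$ for every $b \in \B$ and $0 \le j \le n+1$. The key tool I will exploit from the $B_p$-framework of Section 4 is that $c \in B_p$ implies both $cH^{-p/2-\eta}$ and $H^{-p/2-\eta}c$ are Hilbert--Schmidt for every $\eta>0$, with norms controlled by $\|c\|_\zeta/\sqrt{\eta}$.

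My plan is a Dyson-type expansion of $[b_2, H^{-p}]$. Combining the Laplace representation $H^{-p}=\Gamma(p)^{-1}\int_0^\infty t^{p-1}e^{-tH}\,dt$ with Duhamel's identity $[b_2,e^{-tH}]=-\int_0^t e^{-sH}\delta(b_2)e^{-(t-s)H}\,ds$ yields, after a change of variables, the exact formula
\[
[b_2, H^{-p}] = -\Gamma(p)^{-1}\iint_{\R_+^2}(u+v)^{p-1}e^{-uH}\delta(b_2)e^{-vH}\,du\,dv.
\]
I would then iterate $n$ times the identity $[e^{-uH},A]=-\int_0^u e^{-sH}\delta(A)e^{-(u-s)H}\,ds$ applied symmetrically on both sides of $\delta(b_2)$, producing an expansion
\[
[b_2, H^{-p}] = \sum_{k=1}^{n}c_k\,\delta^k(b_2)H^{-p-k} + \iint_{\R_+^2}W(u,v)\,e^{-uH}\delta^{n+1}(b_2)e^{-vH}\,du\,dv,
\]
where $c_k \in \C$ and $W(u,v)=P(u,v)(u+v)^{p-1}$ with $P$ a homogeneous polynomial of degree $n$. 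By distributing the iterations symmetrically, $P$ can be chosen so that each monomial $u^iv^j$ has both $i,j$ comparable to $n/2$, which will be crucial for integrability.

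For the trace-class estimate after multiplying by $b_1$, I would handle the principal terms $b_1\delta^k(b_2)H^{-p-k}$ by further commutations that, modulo lower-order corrections absorbed inductively via the available smoothness, reduce each to the symmetric form $b_1H^{-\alpha}\delta^k(b_2)H^{-\beta}$ with $\alpha,\beta>p/2$. This then factors as $(b_1H^{-\alpha})\cdot(\delta^k(b_2)H^{-\beta})$---a product of Hilbert--Schmidt operators by the $B_p$-property, hence in $\L^1$. For the remainder I would use the Hilbert--Schmidt estimate $\|b_1e^{-uH}\|_2\le Cu^{-p/2-\eta/2}e^{-u/2}$, derived from the operator inequality $e^{-2uH}\le C_\eta u^{-p-\eta}H^{-p-\eta}$ (valid because $H\ge 1$) together with $b_1\in B_p$, and analogously for $\delta^{n+1}(b_2)$; after the substitution $t=u+v$, $s=u/t$, the trace-norm integrand reduces to $t^{n-\eta}e^{-t/2}\cdot s^{i-p/2-\eta/2}(1-s)^{j-p/2-\eta/2}$ with $i+j=n$, a product of a finite exponential integral in $t$ and a Beta function in $s$ that converges exactly when both $i,j>p/2-1$.

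The hard part, and the main obstacle, will be the fine bookkeeping of the $(n+1)$-fold Duhamel iteration and arranging the exponents $i,j$ of the polynomial $P$ so that both exceed $p/2-1$ simultaneously. This is precisely where the smoothness order $n+1=\lfloor p\rfloor+1$ enters: with $n$ iterations one has $i+j=n\ge p-1$, which barely allows choosing $i,j\ge (n-1)/2$ both exceeding $p/2-1$; any smaller smoothness order would fail to compensate the $s^{-p/2}(1-s)^{-p/2}$ singularities of the naive Duhamel estimate.
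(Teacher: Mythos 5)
Your strategy --- expand $[b_2,(1+|\D|)^{-p}]$ into principal terms carrying extra decay plus a remainder carrying $\lfloor p\rfloor+1$ derivatives of $b_2$, then conclude by factoring into products of two Hilbert--Schmidt operators via the $B_p$ property --- is the heat-kernel (Laplace/Duhamel) analogue of the paper's proof, which instead writes $(1+|\D|)^{-p}=\frac{1}{2\pi i}\int_\ell\lambda^{-p}(\lambda-1-|\D|)^{-1}\,d\lambda$ and iterates the resolvent identity. Your remainder analysis (the Beta-function computation in $s=u/t$, requiring both exponents to exceed $p/2-1$) is a legitimate quantitative version of what the paper dismisses as ``simple estimates'', and your accounting of why exactly $\lfloor p\rfloor+1$ derivatives are needed matches the paper's.

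There is, however, a genuine gap in your treatment of the principal terms. As you have written them they are $b_1\delta^k(b_2)H^{-p-k}$ with $H=1+|\D|$, i.e.\ with the entire negative power of $H$ to the right of $\delta^k(b_2)$. The $B_p$ hypothesis gives $\delta^k(b_2)H^{-\alpha}\in\L^2$ and $H^{-\alpha}\delta^k(b_2)\in\L^2$ only for $\alpha>p/2$, and although $b_1\delta^k(b_2)\in B_p$ by Proposition \ref{B}, one cannot pass from $cH^{-p-k}$ with $c\in B_p$ to $\L^1$: the leftover factor $H^{-\beta}$ is merely bounded, not Hilbert--Schmidt, in the nonunital setting. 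Trace-classness genuinely requires a power of $H$ exceeding $p/2$ to sit \emph{between} $b_1$ and $\delta^k(b_2)$. You propose to achieve this by ``further commutations absorbed inductively'', but those corrections are commutators $[\delta^k(b_2),H^{-\alpha}]$ with fractional $\alpha$ --- objects of exactly the type the proposition is about --- and controlling them costs additional derivatives of $b_2$; at $k=\lfloor p\rfloor$ only $\delta^{\lfloor p\rfloor+1}(b_2)$ remains, and a single Duhamel step closes the gap only when $p<2$. The clean fix is to arrange the iteration so that the heat factors accumulate on the left of the nested commutators (always commute $\delta^j(b_2)$ to the right through $e^{-vH}$): the principal terms then read $b_1H^{-p-k}\delta^k(b_2)=\bigl(b_1H^{-\alpha}\bigr)\bigl(H^{-\beta}\delta^k(b_2)\bigr)$ with $\alpha+\beta=p+k$ and $\alpha,\beta>p/2$, each factor Hilbert--Schmidt. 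This is precisely the form the paper's resolvent expansion produces automatically, and with that modification your argument goes through.
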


\begin{proof}
We may write
$$
(1+|\D|)^{-p}=\frac{1}{2\pi i}\int_\ell \lambda^{-p}(\lambda-1-|\D|)^{-1} d\lambda,
$$
where $\ell=\{a+iv : \  v\in \R,\, a=1/2\}$.
Then we have
\begin{align*}
[(1+|\D|)^{-p},b_2]&=\frac{1}{2\pi i}\int_\ell \lambda^{-p}(\lambda-1-|\D|)^{-1} 
[|\D|,b_2](\lambda-1-|\D|)^{-1}d\lambda\\
&\hspace{-1cm}=p(1+|\D|)^{-p-1}[|\D|,b_2]
-\frac{1}{2\pi i}\int_\ell \lambda^{-p}(\lambda-1-|\D|)^{-1} 
[(\lambda-1-|\D|)^{-1},[|\D|,b_2]]d\lambda.
\end{align*}
Repeat the previous resolvent trick to 
obtain
$$
[(1+|\D|)^{-p}\!,b_2]=p(1+|\D|)^{-p-1}[|\D|,b_2]
-\frac{1}{2\pi i}\int_\ell \lambda^{-p}(\lambda-1-|\D|)^{-2} 
[|\D|,[|\D|,b_2]](\lambda-1-|\D|)^{-1}d\lambda.
$$
Now iterate the process and multiply on the left by $b_1$ to obtain
\begin{align*}
&b_1[(1+|\D|)^{-p},b_2]=p\,b_1(1+|\D|)^{-p-1}[|\D|,b_2]-p(p+1)\,b_1
(1+|\D|)^{-p-2}[|\D|,[|\D|,b_2]]\ldots
\\
&\quad\quad\ldots
-\frac{1}{2\pi i}\int_\ell \lambda^{-p}\,b_1(\lambda-1-|\D|)^{-\lfloor p\rfloor-1} 
[|\D|, \ldots, [|\D|,[|\D|,b_2]]\cdots](\lambda-1-|\D|)^{-1}d\lambda,
\end{align*}
 where there are $\lfloor p\rfloor +1$ commutators 
in the integrand. 
Under the smoothness assumption  $\delta^k(b_2)\in B_p$ for 
$k\leq \lfloor p\rfloor+1$ we may use Lemma \ref{Bp-grew} to argue that for each 
$\lambda\in\ell$ 
the product
$$
b_1(\lambda-1-|\D|)^{-\lfloor p\rfloor-1} 
[|\D|, \ldots, [|\D|,[|\D|,b_2]]\cdots]
$$ 
is trace class because 
$b_1(1+|\D|)^{-\lfloor p\rfloor-1} [|\D|, \ldots, [|\D|,[|\D|,b_2]]\cdots]$ 
is trace class. Simple estimates now show that the integral converges in trace norm.
The result follows.
\end{proof}

{\bf Example}. Take $(\cn,\tau)=(\B(L^2(\R^p,S)),\mbox{Tr})$, where $S$ is the 
trivial spinor bundle, and $\mbox{Tr}$ the usual operator trace. We let 
$\dslash$ be the standard Dirac operator, $G=(1+\dslash^2)^{-p/2}$, 
$\HH=L^2(\R^p,S)$, and $\A$
the bounded smooth integrable functions all of whose partial derivatives are 
bounded and integrable.
Let $b\in L^2(\R^p)$ be the function 
$b(x)=(1+\Vert x\Vert^2)^{-p/4-\epsilon/2}$. 
Then $\A\ni a=b^2$
satisfies the hypotheses of Corollary \ref{cor:compute-dixy} and
\begin{align*}
\mbox{Tr}_{\omega}\left((1+\Vert x\Vert^2)^{-p/2-\epsilon}G\right)
&=\tilde\omega - \lim_{r\to\infty}\frac{1}{r}\mbox{Tr}
\left((1+\Vert x\Vert^2)^{-p/4-\epsilon/2}G^{1+\frac{1}{r}}(1+\Vert x\Vert^2)^{-p/4-\epsilon/2}\right)\\
&=C_p\int_{\R^p}\,(1+\Vert x\Vert^2)^{-p/2-\epsilon}\,d^px,
\end{align*}
the final line following from an explicit calculation similar to that in \cite[Corollary 14]{R2}.

Recall that an algebra $\A_c$ has local units when, for
any finite subset of elements $\{a_1,\dots,a_k\}$ of~$\A_c$, there
exists $u \in \A_c$ such that $ua_i = a_iu = a_i$ for $i = 1,\dots,k$. In the example above we 
could take $\A_c$ to be the smooth compactly supported functions, and apply the theory in
\cite{R2} to elements of $\A_c$.
However, the local units based theories of \cite{GGISV,R2} can not accomodate the
integration of the function $b^2$.
Indeed, since
the function $b^2$ is nowhere vanishing, the condition $b^2u=b^2$ implies necessarily 
that $u$ is the constant unit function, which does not belong to $\A$. Thus even in the classical
case of manifolds our approach to integration allows the integration of many more functions.
Similar examples may be constructed on any complete manifold, \cite{R2}.

\end{document}